\theoremstyle{plain}
\newtheorem{theorem}{Theorem}[section]
\newtheorem{corollary}[theorem]{Corollary}
\newtheorem{lemma}[theorem]{Lemma}
\newtheorem{proposition}[theorem]{Proposition}
\theoremstyle{definition}
\newtheorem{definition}[theorem]{Definition}
\newtheorem{remark}[theorem]{Remark}
\newcommand{\Thirdmove}{{Framing} conjugation}
\newcommand{\thirdmove}{{framing} conjugation}
\newcommand{\thirdmoves}{{framing} conjugations}
\newcommand{\CPb}{\overline{\mathbb{CP}}{}^{2}}
\newcommand{\C}{\mathbb{C}}
\newcommand{\R}{\mathbb{R}}
\newcommand{\Z}{\mathbb{Z}}
\newcommand{\K}{{\rm K3}}
\newcommand{\Pa}{\partial}
\def\Ker{\operatorname{Ker}}
\def\Diff{\operatorname{Diff}}
\def\id{\operatorname{id}}
\def\Mod{\operatorname{Mod}}
\def\Crit{\operatorname{Crit}}
\def\Push{\operatorname{Push}}
\begin{document}

\title[Hurwitz equivalence for Lefschetz fibrations and multisections]{Hurwitz equivalence for \\ Lefschetz fibrations and their multisections}

\author[R. \.{I}. Baykur]{R. \.{I}nan\c{c} Baykur}
\address{Department of Mathematics and Statistics, University of Massachusetts, Amherst, MA 01003-9305, USA}
\email{baykur@math.umass.edu}

\author[K. Hayano]{Kenta Hayano}
\address{Department of Mathematics, Graduate School of Science, Hokkaido University, Sapporo, Hokkaido 060-0810, Japan}
\email{k-hayano@math.sci.hokudai.ac.jp}

\begin{abstract}
In this article, we characterize isomorphism classes of Lefschetz fibrations with multisections via their monodromy factorizations. We prove that two Lefschetz fibrations with multisections are isomorphic if and only if their monodromy factorizations in the relevant mapping class groups are related to each other by a finite collection of modifications, which extend the well-known Hurwitz equivalence. This in particular characterizes isomorphism classes of Lefschetz pencils. We then show that, from simple relations in the mapping class groups, one can derive new (and old) examples of Lefschetz fibrations which cannot be written as fiber sums of blown-up pencils. 
\end{abstract}

\maketitle

\section{Introduction} 

Lefschetz fibrations became a central tool in differential geometry and topology following Donaldson's insight in the late 1990s \cite{Donaldson} that one can effectively explore the topological aspects of manifolds by studying smooth maps on them which have locally holomorphic character. Since then they have gained a prominent role in symplectic topology, where a striking balance between flexibility and rigidity makes it possible to carry out topological constructions and geometric obstructions at the same time. In dimension $4$, a firmer grasp on the quickly developing theory of Lefschetz fibrations is pursued via factorizations in mapping class groups of surfaces \cite{Kas_1980, Matsumoto3, GS}. 

Our goal is to better understand how surfaces in symplectic $4$--manifolds arise in this setting. Surfaces in $4$--manifolds play a crucial role in our exploration of the topology of smooth and symplectic $4$--manifolds; they help determining the homeomorphism type, and distinguishing the diffeomorphism type.  As observed by Donaldson and Smith \cite{Donaldson_Smith_2003}, any symplectic surface can be seen as a \textit{multisection} or an \textit{$n$--section} of a Lefschetz fibration, which is a branched surface intersecting all the fibers positively $n$ times. In \cite{BaykurHayano_multisection}, we initiated an extensive study of symplectic surfaces via factorizations in more elaborate mapping class groups of surfaces. The current article aims to add to this effort by refining and exploring the correspondence between symplectic surfaces in symplectic $4$--manifolds, multisections of Lefschetz fibrations, and positive factorizations in surface mapping class groups. 

An isomorphism between two Lefschetz fibrations $(X_i, f_i)$ with $n$--sections $S_i$, $i=1,2$, is given by a pair of orientation-preserving diffeomorphisms between the total spaces $X_i$ and the base $2$--spheres, which commute with $f_i$ and match $S_i$. 
The mapping class group $\Mod(\Sigma_g;\{s_1,\ldots,s_n\})$, which consists of orientation-preserving self-diffeomorphisms of the genus-$g$ surface $\Sigma_g$ preserving the set of marked points $\{s_1,\ldots,s_n\}$, is the host to the lift of any monodromy factorization of $(X_i, f_i)$ prescribed by an $n$--section $S_i$. There are simple modifications of monodromy factorizations that naturally arise from a few choices made in the process of extracting these factorizations from a given fibration ---which we describe and study in detail in the later sections. Two monodromy factorizations in $\Mod(\Sigma_g;\{s_1,\ldots,s_n\})$ will be called Hurwitz equivalent if one can be obtained from the other by applying a sequence of this finite set of simple modifications. 

Our main theorem in this article is the following:

\begin{theorem} \label{mainthm}
For $g, n \geq 1$ there exists a one-to-one correspondence
\begin{eqnarray*}
\left\{\begin{array}{c}
\mbox{Genus--$g$ Lefschetz fibrations}\\
\mbox{with $n$--sections,}\\
\mbox{up to isomorphism}\\
\end{array}\right\}
&
\longleftrightarrow
&
\left\{\begin{array}{c}
\mbox{Monodromy factorizations}\\
\mbox{in $\Mod(\Sigma_g;\{s_1,\ldots,s_n\})$,} \\
\mbox{up to Hurwitz equivalence }\\
\end{array}\right\}
\end{eqnarray*}

\end{theorem}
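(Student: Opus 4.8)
The plan is to construct the correspondence explicitly in both directions and then show the two constructions are mutually inverse, using the classical bijection for Lefschetz fibrations without marked points (Kas, Matsumoto, Gompf--Stipsicz) as a template and grafting onto it the data of the $n$-section. Throughout, the base is the closed sphere, so every monodromy factorization will be a factorization of the \emph{identity}.

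First I would produce a factorization from a triple $(X,f,S)$. After a small perturbation I may assume that the Lefschetz critical values of $f$ and the branch values of the branched covering $f|_S \colon S \to S^2$ are distinct regular values, apart from one base point $b_0$; I then fix an identification of the fiber $f^{-1}(b_0)$ with $\Sigma_g$ carrying the $n$ points $S \cap f^{-1}(b_0)$ onto $\{s_1,\ldots,s_n\}$, and I choose a Hurwitz arc system from $b_0$ to the remaining special values. Trivializing $f$ along each arc assigns to each special value a geometric monodromy in $\Mod(\Sigma_g;\{s_1,\ldots,s_n\})$: over a disk about a Lefschetz value the section restricts to an unbranched, hence trivial, cover, so the monodromy is a right-handed Dehn twist along the vanishing cycle fixing each marked point; over a disk about a simple branch value $f$ is a product bundle while $S$ branches, so the monodromy is a positive half-twist exchanging the two colliding marked points (and the identity after forgetting the marked points, which recovers the classical picture). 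Since $f$ and $S$ extend as a product over the disk around $b_0$, the ordered product of these letters is trivial, giving a monodromy factorization of the identity.

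Next I would check that the only indeterminacies of this procedure are exactly the allowed modifications: changing the fiber identification conjugates the whole word (global conjugation), changing the arc system alters it by elementary transformations (Hurwitz moves), and the remaining freedom --- the trivialization near the multisection used to lift the monodromy from $\Mod(\Sigma_g)$ to the marked-point group --- is absorbed by \thirdmove. For the reverse map, given a word $\theta_1\cdots\theta_m = 1$ I build a fibration over the disk by attaching a Lefschetz handle for each Dehn-twist letter and inserting the standard branch-point model of an embedded surface for each half-twist letter, tracing out the multisection over the punctured disk. Triviality of the total monodromy then lets me glue $\Sigma_g \times D^2$ to close the base to $S^2$ and to cap the $n$ returning marked points into a closed branched multisection $S$; because the reference bundle over the capping disk is canonically trivial, the normal data of $S$ --- in particular its self-intersection, computable from the branch-letter count via Riemann--Hurwitz --- is determined rather than chosen.

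Finally I would verify the constructions are mutually inverse: reconstructing a fibration from an extracted word returns an isomorphic triple, since a Lefschetz fibration with multisection is determined up to isomorphism by its local models and gluing data, and extracting a word from a built fibration returns the same word up to the three moves. The hypotheses $g,n\geq 1$ enter here to guarantee $\chi(\Sigma_g \setminus \{s_1,\ldots,s_n\}) < 0$, so that the relevant diffeomorphism groups are homotopically trivial and the isotopy classes of identifications, arc systems, and framings are tightly controlled. I expect the main obstacle to be the faithful bookkeeping of the section: matching simple branch points with positive half-twists and reconstructing the branched surface as a genuinely embedded one, and --- the essentially new point beyond the classical theorem --- pinning down the ambiguity in lifting a Dehn twist to $\Mod(\Sigma_g;\{s_1,\ldots,s_n\})$, which reflects the normal framing of $S$ along the singular fibers, and proving that \thirdmoves account for this ambiguity precisely and completely.
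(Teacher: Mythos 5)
Your overall architecture matches the paper's: extract a word from a Hurwitz arc system and a fiber identification, show that the choices account exactly for elementary transformations and global conjugations, reconstruct a triple from a word, and invoke the vanishing of $\pi_1(\Diff(\Sigma_g;\{s_1,\ldots,s_n\}))$ when $2-2g-n<0$ to control the gluing (this is the paper's Lemma~\ref{T:contractibility Diff}). But there is a genuine gap in the direction ``Hurwitz equivalent words $\Rightarrow$ isomorphic triples,'' which is the technical heart of the paper. To run the Matsumoto-style extension of a fiber-preserving diffeomorphism over each singular fiber \emph{while matching the multisections}, one must know that the mapping class of a local monodromy determines its geometric local model: that $\tau_{\gamma_1}=\tau_{\gamma_2}$ in $\Mod(\Sigma_g;\{s_1,\ldots,s_n\})$ forces the branch arcs $\gamma_1,\gamma_2$ to be isotopic rel the marked points, and that $\widetilde{t_{c_1}}=\widetilde{t_{c_2}}$ forces the vanishing cycles through two marked points to be isotopic rel the marked points. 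These are the paper's Lemmas~\ref{T:isotopy equality half twist} and \ref{T:isotopy equality lift Dehn twist}, proved by a delicate bigon-criterion case analysis (with Lemma~\ref{T:non commutable classes} as input), and they are not automatic: the analogous statements are in fact \emph{false} in the framed group $\Mod(\Sigma_g^n;\{u_1,\ldots,u_n\})$. You flag this as ``the main obstacle'' but offer no argument for it; asserting that a fibration with multisection ``is determined up to isomorphism by its local models and gluing data'' presupposes exactly what must be proved, namely that equal mapping classes yield the same local models.

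A second, related confusion: the Hurwitz equivalence in the statement lives in $\Mod(\Sigma_g;\{s_1,\ldots,s_n\})$ and consists of only \emph{two} moves, elementary transformation and global conjugation. {Framing} conjugation is introduced in the paper only for lifts to the framed group $\Mod(\Sigma_g^n;\{u_1,\ldots,u_n\})$, where the lifts of $\tau_\gamma$ and $\widetilde{t_c}$ are genuinely ambiguous up to conjugation by boundary Dehn twists (Corollary~\ref{T:equivalence multisection factorization framed}). In the closed marked surface a Dehn twist about a curve bounding a once-marked disk is trivial, so your appeal to {framing} conjugation to ``absorb'' a lifting ambiguity is vacuous there; more importantly, there is no such ambiguity to absorb in the unframed group, precisely because of the two lemmas above. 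Repairing the proposal requires supplying proofs of those injectivity lemmas (or an equivalent mechanism for matching branch arcs and marked vanishing cycles across the two fibrations).
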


\smallskip
\noindent When $g\geq 2$ and $n=0$, i.e. when the multisections are omitted, the above result is classical, due to Kas \cite{Kas_1980} and Matsumoto \cite{Matsumoto3}.

Theorem~\ref{mainthm} is proved in Section~\ref{Sec:equivalence}, after a review of background results in Section~\ref{S:factorization multisection}. We also provide an extension of this correspondence to one between \emph{framed} multisections of Lefschetz fibrations and monodromy factorizations in framed mapping class groups introduced in \cite{BaykurHayano_multisection}. In turn, through their monodromy factorizations, we obtain a full characterization of Lefschetz \emph{pencils}, up to isomorphisms that can permute base points; see Corollary~\ref{T:equivalence pencils}. 

In the last section, we turn to an intriguing question regarding the diversity of Lefschetz fibrations versus that of pencils, which allows us to demonstrate how geometric, topological, and algebraic aspects of the theory of Lefschetz fibrations can be nicely brought together. In \cite{Stipsicz}, Stipsicz asked whether every Lefschetz fibration can be obtained as fiber sums of blown-up pencils; that is to say, whether Lefschetz pencils are the building blocks of all Lefschetz fibrations via blow-ups and fiber sums. In Section~\ref{Stipsicz}, we will illustrate a way to produce counter-examples, using the well-known $5$--chain relation in the mapping class group of a genus--$2$ surface, along with monodromy modifications involving multisections, and a variety of geometric and topological results packaged in a recipe from  \cite{BaykurHayano_multisection} we will be following here. We moreover show --by appealing to above Hurwitz equivalences-- that the only other counter-example we know of, a genus--$2$ Lefschetz fibration of Auroux (shown to be a counter-example by Sato in \cite{Sato_2008}), can also be derived from the same scheme.

\vspace{0.2in}
\noindent \textit{Acknowledgements.} The results of this article were partially presented by the second author at the 13th International Workshop on Real and Complex Singularities held in August 2014 at Sao Carlos, Brazil. He would like to thank the organizers of the workshop for the opportunity and for their invitation to prepare this manuscript. 
The authors would also like to thank the anonymous referee for helpful comments. The first author was partially supported by the Simons Foundation Grant $317732$. The second author was supported by JSPS and CAPES under the Japan-Brazil research cooperative program and JSPS KAKENHI (26800027).

\section{Multisections of Lefschetz fibrations via positive factorizations}\label{S:factorization multisection}

In this section we will briefly review the basic definitions and properties of Lefschetz fibrations and their multisections, focusing on how they can be captured and studied as certain factorizations in mapping class groups of surfaces. For a more detailed exposition, the reader can turn to \cite{GS} and \cite{BaykurHayano_multisection}.

Throughout the paper, all manifolds we work with are assumed to be closed, connected and oriented, unless otherwise noted. 

\subsection{Lefschetz pencils, fibrations, and multisections} \

A \emph{Lefschetz fibration} $(X,f)$ is a smooth map $f:X\to S^2$, from a $4$--manifold $X$ onto the $2$--sphere, which only has \emph{nodal singularities}, that is, for any $x$ in the \emph{critical locus} $\Crit(f)$, there exist orientation-preserving complex coordinate neighborhoods $(U,\varphi)$ at $x\in X$ and $(V,\psi)$ at $f(x)\in S^2 \cong \mathbb{CP}^1$, such that 
\[\psi\circ f \circ \varphi^{-1}(z,w) = z^2+w^2 \, . \] 
So $f$ is a submersion at all but finitely many points, where we have the local model of a complex nodal singularity. For $g$ the genus of a regular fiber, we call $(X,f)$ a \emph{genus--$g$ Lefschetz fibration}.

A \emph{Lefschetz pencil} $(X,f)$ is a Lefschetz fibration $f:X\setminus B\to S^2$, where $B$ is a \textit{non-empty} discrete set in $X$, called the \emph{base locus}, such that there exist an orientation-preserving complex coordinate neighborhood $(U,\varphi)$ around each \emph{base point} $x \in B$ and a diffeomorphism $\psi:S^2\to \mathbb{CP}^1$, which together satisfy
\[
\psi\circ f\circ \varphi^{-1}(z,w) = [z:w]. 
\]
We say $(X,f)$ is a \emph{genus--$g$ Lefschetz pencil with $n$ base points} for $g$ the genus of the regular fiber (compactified by adding the base points), and $n=|B|$. Given a genus--$g$ Lefschetz pencil $(X,f)$ with $n$ base points, we can obtain a genus--$g$ Lefschetz \emph{fibration} $f': X'=X \# n  \CPb \to S^2$ with $n$ disjoint sections $S_j$ of self-intersection $-1$, each arising as an exceptional sphere of the blow-up at the base point $x_j$. The correspondence is canonical, as one can blow-down all the $S_j$ in the fibration $(X',f')$ to recover the pencil $(X,f)$  (e.g. \cite[\S.8.1]{GS}). 

Recall that a \emph{symplectic structure} is a closed non-degenerate $2$--form $\omega$ on a smooth manifold, such as the K\"{a}hler form on a complex algebraic variety. A \emph{symplectic $4$--manifold} is then a pair $(X, \omega)$. The prominent role of Lefschetz fibrations in differential geometry and topology is mostly due to Donaldson's amazing result from the late 1990s, who showed that an analogue of the classical Lefschetz hyperplane theorem for complex algebraic surfaces holds in this more flexible setting: every \emph{symplectic} $4$--manifold admits a Lefschetz pencil \cite{Donaldson}. Conversely, generalizing an idea of Thurston, Gompf observed that every $4$--manifold admitting a Lefschetz pencil or a non-trivial (i.e.~with non-empty critical locus) Lefschetz fibration is symplectic \cite{GS}. Furthermore, one can strike a compatibility condition between the pairs $(X, \omega)$ and $(X, f)$, which asks for the fibers of $f$ to be symplectic surfaces with respect to $\omega$.

\begin{theorem}[Donaldson, Gompf] 
Every symplectic $4$--manifold $(X, \omega)$ admits a compatible Lefschetz pencil, and every Lefschetz pencil~/~non-trivial fibration $(X, f)$ can be equipped with a compatible symplectic form.
\end{theorem}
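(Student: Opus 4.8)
The plan is to prove the two assertions separately, as they are of completely different natures: the existence of a compatible pencil on any symplectic manifold is Donaldson's deep theorem, resting on approximately holomorphic geometry, while the converse is a more hands-on differential-topological construction due to Thurston and Gompf. I expect essentially all of the difficulty to lie in the first (existence) direction.

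For the existence direction, I would first perturb $\omega$ slightly so that its cohomology class becomes rational; since the symplectic condition is open and rational classes are dense in $H^2(X;\R)$, this is possible, and after rescaling we may assume $[\omega]/2\pi \in H^2(X;\Z)$. Next choose a compatible almost complex structure $J$ with associated metric $g=\omega(\cdot,J\cdot)$, and let $L\to X$ be a Hermitian line bundle with $c_1(L)=[\omega]/2\pi$ carrying a connection of curvature $-i\omega$. Passing to the high tensor powers $L^{\otimes k}$, whose curvature is $-ik\omega$, one arranges that the $\bar\partial$ part of sections is negligible compared with the $\partial$ part at the natural length scale $1/\sqrt{k}$; sections with this property are called \emph{asymptotically holomorphic}. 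The heart of the argument is Donaldson's quantitative transversality theorem: for all large $k$ one can find pairs $(s_0^k,s_1^k)$ of asymptotically holomorphic sections of $L^{\otimes k}$ whose joint $1$--jet is uniformly transverse to the relevant stratification. The projectivization $x\mapsto[s_0^k(x):s_1^k(x)]$ is then defined away from the common zero locus $B=\{s_0^k=s_1^k=0\}$, and uniform transversality forces $B$ to be a finite set near which the map has the standard pencil model $[z:w]$, and the critical points of the induced map to be nondegenerate of Lefschetz (nodal) type with the correct local orientation. Compatibility of $\omega$ with the resulting pencil follows because its fibers are, up to a perturbation controlled by the asymptotic holomorphicity, $J$--holomorphic, hence $\omega$--symplectic.

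For the converse, I would treat fibrations first and reduce pencils to them. Given a non-trivial Lefschetz fibration $f:X\to S^2$, the non-empty critical locus is precisely what guarantees (Gompf's observation) the existence of a class $a\in H^2(X;\R)$ with $\langle a,[F]\rangle>0$ for the fiber $F$. Over the complement of the critical values $f$ is an honest surface bundle, and by Thurston's argument one patches together fiberwise area forms using a partition of unity subordinate to $f$ together with a representative of $a$, obtaining a closed $2$--form $\eta$ restricting positively to every regular fiber; near each node one replaces $\eta$ by the standard K\"ahler form of the holomorphic local model $z^2+w^2$ and interpolates. Then, for an area form $\omega_{S^2}$ on the base and $N\gg 0$, the form
\[
\omega \ = \ \eta + N\, f^*\omega_{S^2}
\]
is closed and, because $\eta$ dominates in the fiber directions while the pullback term supplies the missing base directions, nondegenerate; hence it is a symplectic form with symplectic fibers. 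For a pencil one blows up the base points to obtain a Lefschetz fibration with exceptional $(-1)$--sphere sections, applies the above to produce a compatible symplectic form, and then symplectically blows down the exceptional spheres to recover a symplectic form compatible with the original pencil.

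The main obstacle is squarely the existence direction: establishing the uniform, $k$--independent transversality estimates for the $1$--jets of $(s_0^k,s_1^k)$, which requires the globalization-from-local-perturbations machinery built on Gaussian-type peak sections concentrated at scale $1/\sqrt{k}$, together with a delicate inductive perturbation scheme run over the relevant length scales. By contrast, in the Gompf direction the only genuinely nontrivial points are verifying the cohomological positivity $\langle a,[F]\rangle>0$ from non-triviality of the fibration and carrying out the interpolation near the nodes, both of which become routine once the local K\"ahler models are in place.
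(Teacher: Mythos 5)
The paper states this theorem purely as background, citing Donaldson \cite{Donaldson} and Gompf--Stipsicz \cite{GS} without giving any proof, so there is no internal argument to compare against. Your outline is a faithful summary of the standard proofs from those sources --- Donaldson's asymptotically holomorphic sections and quantitative transversality for the existence of a pencil, and the Thurston--Gompf patching of fiberwise area forms (using that the fiber class pairs positively with some cohomology class when the critical locus is non-empty, interpolating with the local K\"ahler model near the nodes, and blowing down to handle pencils) for the converse --- and correctly locates the real difficulty in the uniform transversality estimates.
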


The main companion of a Lefschetz fibration $(X, f)$ in this paper will be an embedded surface $S$ which sits in $X$ in a rather special way with respect to $f$.

\begin{definition} [\cite{BaykurHayano_multisection}]\label{Def:multisection}
A (possibly disconnected) closed oriented surface $S\subset X$ is called a \emph{multisection}, or an \emph{$n$--section}, of a Lefschetz fibration $(X,f)$ if it satisfies the conditions:
\begin{enumerate}
\item
The restriction $f|_{S}$ is an $n$--fold simple branched covering, 
\item
The restriction of the differential $df_x:N_xS\to T_{f(x)}S^2$ preserves the orientation for any branched point $x\in S$ of $f|_{S}$, where $N_xS\subset T_xX$ is a normal space of $T_xS$, which has the canonical orientation derived from that of $X$, 
\item
For any branched point $x\in S\cap \Crit(f)$ of $f|_{S}$, there exist complex coordinate neighborhoods $(U,\varphi)$ and $(V,\psi)$ which make the following diagram commute: 
\[
\begin{CD}
(U,U\cap S) @>\varphi >> (\C^2,\C\times\{0\}) \\
@V f VV  @VV (z,w)\mapsto z^2+w^2 V \\
V @>\psi >> \C.  
\end{CD}
\]
\end{enumerate}
A triple $(X, f, S)$ will denote a Lefschetz fibration $(X,f)$ and its multisection $S$.
\end{definition}

Just like how a Lefschetz fibration locally behaves like a holomorphic map, a multisection behaves like a holomorphic curve, intersecting the fibers all positively, and so that the restriction of the fibration map to it is a holomorphic branched covering onto ${{\mathbb{CP}}{}^{1}}$. Multisections are found in abundance, as observed by Donaldson and Smith (who referred to them as \emph{standard surfaces}):  for any symplectic surface $S$ in a symplectic $4$--manifold $(X, \omega)$, there exists a compatible Lefschetz pencil $(X,f)$, such that $S$ is a multisection of $f|_{X \setminus B}$, and conversely, for any triple $(X,f, S)$, there exists a compatible symplectic form $\omega$ making the fibers and $S$ symplectic \cite{Donaldson_Smith_2003}.

\smallskip
\subsection{Local and global monodromies, positive factorizations} \

We will make a few additional assumptions on $(X,f,S)$, merely to simplify our upcoming discussions on how to describe Lefschetz fibrations and their multisections in terms of certain factorizations in surface mapping class groups. First, we will assume that $f$ is injective on $\Crit(f)$, and also that each branched point of a multisection, if not contained in $\Crit(f)$, does not lie on a singular fiber, i.e. not contained in $f^{-1}(\Crit(f))$. Both of these can be always achieved after a small perturbation. These assumptions will allow us to get standard local models for $(X,f,S)$ for the fibration over $S^2 \setminus f(\Crit(f) \cup \Crit(f|_S))$ --and not only \textit{around} the isolated points in $\Crit(f) \cup \Crit(f|_S)$. 

It is also customary to assume that $f$ is \emph{relatively minimal}, that is, no fiber contains a sphere with self-intersection $-1$, which otherwise could be blown-down without destroying the rest of the fibration. A Lefschetz \emph{pencil} $(X,f)$ is said to be \emph{relatively minimal}, if no fiber component is a self-intersection $-1+k$ sphere containing $k$ points of $B$, to ensure that the associated Lefschetz fibration $(X',f')$ is relatively minimal. As we will see shortly, this assumption is needed to guarantee that no information on the local topology is lost when we look at the monodromy of the fibration.

For a surface $\Sigma$ and points $s_1,\ldots,s_n\in \Sigma$, let $\Diff(\Sigma;\{s_1,\ldots,s_n\})$ be the group of orientation-preserving diffeomorphisms of $\Sigma$ which preserve the set $\{s_1,\ldots,s_n\}$. We call $\Mod(\Sigma;\{s_1,\ldots,s_n\})=\pi_0(\Diff(\Sigma;\{s_1\ldots,s_n\})$ the \emph{mapping class group of \, $\Sigma$ with marked points $\{s_1,\ldots, s_n\}$}. It consists of elements of $\Diff(\Sigma;\{s_1,\ldots,s_n\})$, modulo isotopies fixing the set $\{s_1,\ldots,s_n\}$, where the group structure is induced by compositions of maps, that is, $[\varphi_1]\cdot [\varphi_2] = [\varphi_1\circ \varphi_2]$ for $\varphi_1,\varphi_2 \in \Diff(\Sigma_g;\{s_1,\ldots,s_n\})$.

Let $\Sigma_g^n$ denote a genus--$g$ surface with $n$ boundary components, and take points $u_1,\ldots,u_n\in \Pa \Sigma_g^n$ which cover the elements of $\pi_0(\partial \Sigma_g^n)$. The \emph{framed mapping class group} $\Mod(\Sigma_g^n;\{u_1,\ldots,u_n\})$ consists of orientation-preserving diffeomorphisms of $\Sigma_g^n$ which preserve set of marked points $\{u_1,\ldots,u_n\}$, modulo isotopies fixing the same data \cite{BaykurHayano_multisection}. Clearly, one can pass to a closed surface $\Sigma_g = \Sigma_g^0$ by capping the boundaries by disks, the centers of which we label as $s_1,\ldots,s_n\in \Sigma_g$. This boundary capping map induces a surjective homomorphism 
\[ \Mod(\Sigma_g^n;\{u_1, \ldots,u_n\})\twoheadrightarrow \Mod(\Sigma_g;\{s_1,\ldots,s_n\}) \, .\]

Now let $(X,f)$ be a Lefschetz fibration with $l$ critical points, $S$ its $n$--section and $\Crit(f|_S)\setminus \Crit(f)=\{b_1,\ldots,b_k\}\subset X$ the set of branched points of $f|_{S}$ away from $\Crit(f)$. 
Set $f(\Crit(f)\cup \Crit(f|_S)) = \{a_1,\ldots,a_{k+l}\}$, and take paths $\alpha_1,\ldots,\alpha_{k+l}\subset S^2$ with a common initial point  $p_0 \in S^2\setminus f(\Crit(f)\cup \Crit(f|_S))$ such that
\begin{itemize}
\item
$\alpha_1,\ldots,\alpha_{k+l}$ are mutually disjoint except at $p_0$, 
\item
$\alpha_i$ connects $p_0$ with $a_i$, 
\item
$\alpha_1,\ldots,\alpha_{k+l}$ are ordered counterclockwise around $p_0$, i.e.~there exists a small loop around $p_0$ oriented counterclockwise, hitting each $\alpha_i$ only once in the given order.
\end{itemize}  

\noindent
We take a loop $\widetilde{\alpha}_i$ with the base point $p_0$ by connecting $\alpha_i$ with a small counterclockwise circle with center $a_i$. Let $\mathcal{H}$ be a horizontal distribution of $f|_{X\setminus (\Crit(f)\cup \Crit(f|_S))}$, that is, $\mathcal{H} = \{\mathcal{H}_x\}_{x\in X\setminus (\Crit(f)\cup \Crit(f|_S))}$ is a plane field such that $\Ker(df_x) \oplus \mathcal{H}_x = T_xX$ for any $x\in X\setminus (\Crit(f)\cup \Crit(f|_S))$. 
We assume that $\mathcal{H}_x = T_xS$ for any $x\in S\setminus \Crit(f|_S)$. 
Using $\mathcal{H}$, we can take a lift of the direction vector field of $\widetilde{\alpha}_i$ and a flow of this lift gives rise to a self-diffeomorphism of $f^{-1}(p_0)$. 
We call this diffeomorphism a \emph{parallel transport} of $\widetilde{\alpha}_i$ and its isotopy class a \emph{local monodromy} around $a_i$. 
Note that a local monodromy does not depend on the choice of $\mathcal{H}$. 
Indeed, for any horizontal distribution $\mathcal{H}$ we can find a Riemannian metric $g$ such that $\mathcal{H}_x$ is equal to $(\Ker(df_x))^\perp$, in particular for any two horizontal distribution there exists a one-parameter family of horizontal distributions connecting the given two distributions.

Under an identification of the pair $(f^{-1}(p_0),f^{-1}(p_0)\cap S)$ with $(\Sigma_g,\{s_1,\ldots,s_n\})$, we can regard a parallel transport as a diffeomorphism in $\Diff(\Sigma_g;\{s_1,\ldots,s_n\})$, and thus, a local monodromy as a mapping class in $\Mod(\Sigma_g;\{s_1,\ldots,s_n\})$. 
We denote this mapping class by $\xi_i$. 
Since the concatenation $\widetilde{\alpha}_1\cdots \widetilde{\alpha}_{k+l}$ is null-homotopic in $S^2\setminus f(\Crit(f)\cup \Crit(f|_S))$, the composition $\xi_{k+l}\cdots \xi_{1}$ is the unit element of the group $\Mod(\Sigma_g;\{s_1,\ldots,s_n\})$. 
The factorization 
\[\xi_{k+l}\cdots \xi_{1} =1 \ \ \ \ \text{in $\Mod(\Sigma_g;\{s_1,\ldots,s_n\})$} \]
is called a \emph{monodromy factorization of the triple $(X,f,S)$}, which we will denote in short by $W_{X,f,S}$ (as a \emph{word} in $\xi_i \in \Mod(\Sigma_g;\{s_1,\ldots,s_n\}$). 

Analyzing the local models around $\Crit(f)$ and $\Crit(f|_S)$ (which, remember, might contain in common the type (3) branched points in Definition~\ref{Def:multisection}), we identify three standard elements in the mapping class group  $\Mod(\Sigma_g;\{s_1,\ldots,s_n\})$ that appear as a factor $\xi_i$ above \cite{BaykurHayano_multisection}: If the fiber $f^{-1}(a_i)$ contains a Lefschetz critical point which is not a branched point of $f|_{S}$, then $\xi_i$ is a right-handed Dehn twist along some simple closed curve $c\subset \Sigma_g\setminus \{s_1,\ldots,s_n\}$, which is called a \emph{vanishing cycle} of a Lefschetz critical point in $f^{-1}(a_i)$ \cite{Kas_1980}. (Relative minimality of $(X,f)$ now guarantees that $c$ is not null-homotopic, so we do not have a ``hidden'' Dehn twist factor.)  If $f^{-1}(a_i)$ contains a branched point of $S$ away from $\Crit(f)$, then $\xi_i$ is a half twist $\tau_{\gamma}$ along some path $\gamma\subset \Sigma_g$ between some $s_j$ and $s_{j'}$. Lastly, if $a_i$ is the image of a point in $\Crit(f) \cap \Crit(f|_S)$, we get a mapping class $\xi_i$ which is represented by a Dehn twist $\widetilde{t_{c}}$ shown in Figure~\ref{F:lift dehntwist} for some simple closed curve $c\subset \Sigma_g$ going through $s_j$ and $s_{j'}$.  

\begin{figure}[htbp]
\centering
\includegraphics[width=60mm]{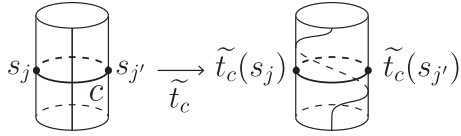}
\caption{A lift of a Dehn twist. }
\label{F:lift dehntwist}
\end{figure}

\noindent Observe that under the forgetful homomorphism, only $\xi_i$ that are Dehn twists (possibly going through $\Crit(f|_S)$) survive, yielding the standard monodromy factorization $W_{X,f}$ of $(X,f)$ of the form $t_{c_l} \cdots t_{c_1} = 1$ in $\Mod(\Sigma_g)$. In other words, the factorization $W_{X,f,S}$ is a \emph{lift} of the factorization $W_{X,f}$. 

It is worth noting that each standard element $\xi_i$ discussed above, let it be a Dehn twist or an arc twist, comes with a preferred orientation, corresponding to positive (right-handed) Dehn twists and arc/braid twists. Any factorization $\xi_{k+l}\cdots \xi_{1} =1$ in $\Mod(\Sigma_g;\{s_1,\ldots,s_n\})$, which consists of only these three types of elements is called a \emph{positive factorization} (of the identity) in $\Mod(\Sigma_g;\{s_1,\ldots,s_n\})$, and it conversely gives rise to a triple $(X,f,S)$. 

We can summarize these as follows (which is a direct corollary of \cite[Theorem~1.1]{BaykurHayano_multisection} obtained by applying the boundary capping homomorphism to the framed mapping class group):

\begin{theorem} \cite{BaykurHayano_multisection} \label{MainQuote}
Let $(X, f, S)$ be a genus--$g$ Lefschetz fibration with $l$ critical points, where $S$ is a connected $n$--section which has $k$ branched points away from $\Crit(f)$ and $r$ branched points on $\Crit(f)$. Then $(X,f,S)$ has a monodromy factorization $W_{X,f,S}$ of the form \, $\xi_{k+l}\cdots \xi_{1} =1$ \, in $\Mod(\Sigma_g;\{s_1,\ldots,s_n\})$, where among $\xi_i$ $k$ many are half-twists $\tau_{\gamma_i}$, $r$ many are Dehn twists $\widetilde{t_{c_{i}}}$ through two marked points in $\{s_1,\ldots,s_n\}$, and the rest are Dehn twists along curves missing the marked points. 
Moreover, $g(S)=  \frac{1}{2}(k+r) - n + 1$ and the union $\{s_1,\ldots,s_n\}\cup \Gamma \cup C$ is connected, where $\Gamma$ is the union of paths between points in $\{s_1,\ldots,s_n\}$ corresponding half twists in the factorization and $C$ is the union of simple closed curves going through two points in $\{s_1,\ldots,s_n\}$ corresponding lifts of Dehn twists in the factorization. 

Conversely, from any such positive factorization of $1$ in $\Mod(\Sigma_g;\{s_1,\ldots,s_n\})$, subject to conditions listed above, one can construct a genus--$g$ Lefschetz fibration $(X,f)$ with $l$ vanishing cycles $c_1, \ldots, c_l$, and a connected $n$--section $S$ of genus $g(S)=\frac{1}{2}(k+r) - n + 1$.
\end{theorem}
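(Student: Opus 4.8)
The plan is to obtain the statement from the framed correspondence of \cite[Theorem~1.1]{BaykurHayano_multisection} by pushing it through the boundary capping homomorphism $\Mod(\Sigma_g^n;\{u_1,\ldots,u_n\})\twoheadrightarrow\Mod(\Sigma_g;\{s_1,\ldots,s_n\})$, and then to supply the two quantitative ``moreover'' claims---the genus formula and the connectivity criterion---by a direct analysis of $f|_S$ as a branched cover. The factorization $\xi_{k+l}\cdots\xi_1=1$ and the trichotomy of its factors were already extracted in the discussion preceding the statement, so the first task is merely bookkeeping: each of the $r$ points of $\Crit(f)\cap\Crit(f|_S)$ contributes a lifted Dehn twist $\widetilde{t_{c_i}}$, each of the $k$ branched points of $f|_S$ off $\Crit(f)$ contributes a half-twist $\tau_{\gamma_i}$, and each of the remaining $l-r$ Lefschetz critical points contributes a Dehn twist along a vanishing cycle disjoint from the marked points, for a total of $k+l$ factors as claimed.

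For the genus formula I would regard $f|_S\colon S\to S^2$ as an $n$-fold simple branched covering whose branch values are exactly the images of the $k+r$ branched points of $f|_S$. Since the cover is simple, each branch value carries total ramification $1$, so Riemann--Hurwitz gives $\chi(S)=n\,\chi(S^2)-(k+r)=2n-(k+r)$; as $S$ is connected, $\chi(S)=2-2g(S)$, and solving yields $g(S)=\tfrac12(k+r)-n+1$. For the connectivity criterion I would translate connectedness of $S$ into transitivity of the covering monodromy $\pi_1\bigl(S^2\setminus\{a_1,\ldots,a_{k+l}\}\bigr)\to\mathfrak{S}_n$. A small loop around a value carrying only a Lefschetz critical point acts trivially on the $n$ marked points (the vanishing cycle misses them), whereas a loop around a half-twist value or a lifted-twist value acts by the transposition exchanging the two marked points joined, respectively, by the corresponding arc of $\Gamma$ or curve of $C$. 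Hence the image subgroup is generated exactly by these transpositions, and $S$ is connected iff this group acts transitively, which happens iff the graph on vertex set $\{s_1,\ldots,s_n\}$ with one edge per element of $\Gamma\cup C$ is connected, i.e.\ iff $\{s_1,\ldots,s_n\}\cup\Gamma\cup C$ is connected.

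For the converse I would run the construction backwards through the framed picture. Given a positive factorization of $1$ of the prescribed shape in $\Mod(\Sigma_g;\{s_1,\ldots,s_n\})$, I would lift each factor along the surjective capping homomorphism to the framed group $\Mod(\Sigma_g^n;\{u_1,\ldots,u_n\})$, obtaining a factorization whose product is a (freely choosable) word in the boundary Dehn twists that records the self-intersection framings. Feeding this into the converse half of \cite[Theorem~1.1]{BaykurHayano_multisection} produces a triple $(X,f,S)$ realizing the given vanishing cycles, half-twists and lifted Dehn twists; capping the boundaries by disks and marking their centers kills exactly the framing data and returns the original factorization. That the resulting $S$ is a connected $n$-section of the stated genus then follows by reading the transitivity criterion and the Riemann--Hurwitz computation above in reverse.

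I expect the main obstacle to be the careful tracking of the capping homomorphism in the converse: one must verify that boundary-parallel Dehn twists map to the identity while half-twists descend to half-twists and the braided Dehn twists descend to the elements pictured in Figure~\ref{F:lift dehntwist}, and that the freedom in the choice of framings never obstructs realizing the prescribed marked-point factorization.
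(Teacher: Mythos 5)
Your proposal matches the paper's treatment: the paper states Theorem~\ref{MainQuote} as a direct corollary of \cite[Theorem~1.1]{BaykurHayano_multisection} obtained by applying the boundary capping homomorphism, which is exactly your strategy, and the Riemann--Hurwitz computation and the transitivity-of-monodromy argument you supply for the genus formula and the connectivity criterion are the standard arguments implicit in that reference. No gaps; the extra care you flag about tracking framings through the capping map is precisely the point the paper itself raises later (Remark~\ref{R:lift factorization} and the discussion of {\thirdmoves}).
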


\begin{remark}
	
We can modify Theorem \ref{MainQuote} so that it also holds for a \emph{disconnected} multisection: in this case the union $\{s_1,\ldots,s_n\}\cup \Gamma \cup C$ is not necessarily connected (the number of components of the union coincides with that of $S$), and the Euler characteristic $\chi(S)$ is equal to $2n-(k+r)$. 

\end{remark}

\smallskip
\begin{remark}\label{R:lift factorization}
As we have shown in \cite{BaykurHayano_multisection} the positive factorization $W_{X,f,S}$ of the identity element in $\Mod(\Sigma_g;\{s_1,\ldots,s_n\})$ lifts to another positive factorization of a product of boundary parallel Dehn twists in $\Mod(\Sigma_g^n;\{u_1,\ldots,u_n\})$. This geometrically corresponds to removing a framed tubular neighborhood of $S$. The latter positive factorization consists of standard factors $t_{c_i}$, which are Dehn twists in the interior of $\Sigma_g^n$, and the lifts of $\tau_{\gamma}$ and $\widetilde{t_c}$ as shown in Figure~\ref{F:lift localmonodromy}, both of which interchange the two boundary components in prescribed ways. 

\begin{figure}[htbp]
\centering
\includegraphics[height=24mm]{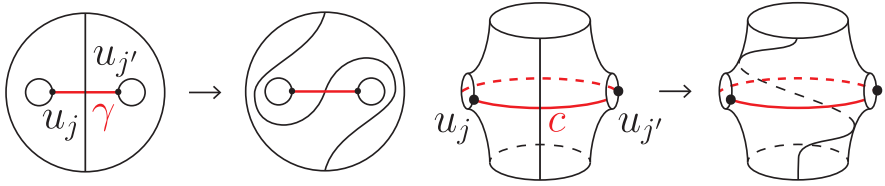}
\caption{Lifts of $\tau_{\gamma}$ and $\widetilde{t_c}$. }
\label{F:lift localmonodromy}
\end{figure}
For simplicity, we denote these lifts by $\tau_{\gamma}$ and $\widetilde{t_c}$ as well. This monodromy factorization in the framed mapping class group allows us to capture the self-intersection number of $S$. 
\end{remark}

\vspace{0.2in}
\section{Equivalence of Lefschetz fibrations with multisections}\label{Sec:equivalence}

The goal of this section is to establish a one-to-one correspondence between triples $(X,f,S)$, where $S$ is an $n$--section of a genus--$g$ Lefschetz fibration $(X,f)$, and positive factorizations $W_{X,f, S}$ of the form $\xi_{k+l}\cdots \xi_{1} =1$ in $\Mod(\Sigma_g;\{s_1,\ldots,s_n\})$, modulo natural equivalence relations on both sides, which we will spell out right away.

The triples $(X_i,f_i,S_i)$, $i=1,2$, are said to be \emph{equivalent} (or \emph{isomorphic}) if there exist orientation-preserving diffeomorphisms $\Phi:X_1\to X_2$ and $\phi:S^2\to S^2$ such that $\Phi(S_1) = S_2$ and $f_2\circ \Phi = \phi\circ f_1$. Clearly, a necessary condition for $(X_i, f_i, S_i)$ to be equivalent is that both fibrations $f_i$ should have the same genus $g$, and the multisections $S_i$ should have the same covering degree $n$.

As we noted in the previous section, a monodromy factorization $(X, f,S)$ does not depend on the choice of a horizontal distribution $\mathcal{H}$. It does however depend on the choice of paths $\alpha_1,\ldots,\alpha_{k+l}$ and that of an identification of $(f^{-1}(p_0),f^{-1}(p_0)\cap S)$ with $(\Sigma_g,\{s_1,\ldots,s_n\})$. Identical to the well-known case of a monodromy factorization of a \emph{pair} $(X,f)$ one can (see e.g. \cite{GS}) easily verify that two monodromy factorizations for a triple $(X,f,S)$, derived from different choices of paths and identifications can be related by successive applications of the following two types of modifications:
\begin{enumerate}
\item
\emph{Elementary transformation}, which changes a factorization as follows: 
\[
\xi_{k+l}\cdots \xi_{i+1}\xi_i\cdots \xi_1 \longleftrightarrow \xi_{k+l}\cdots(\xi_{i+1}\xi_{i}\xi_{i+1}^{-1})\xi_{i+1}\cdots \xi_1. 
\]

\noindent
Note that $(\xi_{i+1}\xi_{i}\xi_{i+1}^{-1})$ in the right hand side represents a single factor of the factorization. 

\item
\emph{Global conjugation}, which changes each member of a factorization by the conjugation of some mapping class $\psi\in \Mod(\Sigma_g;\{s_1,\ldots,s_n\})$: 
\[
\xi_{k+l}\cdots \xi_1 \longleftrightarrow (\psi\xi_{k+l}\psi^{-1})\cdots (\psi\xi_1\psi^{-1}). 
\]
\end{enumerate}
\noindent
We will thus call two factorizations of the unit element $1\in \Mod(\Sigma_g;\{s_1,\ldots,s_n\})$ \emph{Hurwitz equivalent} if one can be obtained from the other after a sequence of these two types of modifications.

\smallskip
\subsection{Equivalence of monodromy factorizations} \

This subsection will be devoted to the proof of the following theorem, which, together with 
Theorem~\ref{MainQuote}, implies the main result of our paper, Theorem~\ref{mainthm}. 

\begin{theorem}\label{T:equivalence multisection factorization}
Let $(X_i, f_i, S_i)$, $i=1,2$ be a genus--$g$ Lefschetz fibration with an $n$--section $S_i$. 
Suppose that $2-2g-n$ is negative, that is, $f_i^{-1}(p_0)\setminus (f_i^{-1}(p_0)\cap S_i)$ is a hyperbolic surface for a regular value $p_0$. The triples $(X_1,f_1,S_1)$ and $(X_2,f_2,S_2)$ are equivalent if and only if their monodromy factorizations $W_{X_1,f_1,S_1}$ and $W_{X_2,f_2,S_2}$ are Hurwitz equivalent. 
\end{theorem}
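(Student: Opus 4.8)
For the converse (hard) direction, I would show two things: $(\star)$ the \emph{word} $W=\xi_{k+l}\cdots\xi_1$ determines the triple up to isomorphism, and that each of the two Hurwitz moves is realized by a change of the vanishing-path system or of the reference-fiber identification within a single triple. Granting both, if $W_{X_1,f_1,S_1}$ and $W_{X_2,f_2,S_2}$ are Hurwitz equivalent, then $W_{X_2,f_2,S_2}$ also arises as a monodromy factorization of $(X_1,f_1,S_1)$ for a suitable system of choices, whence by $(\star)$ the triples are isomorphic; combined with Theorem~\ref{MainQuote} this yields the stated correspondence. First, though, the forward (easy) direction: given an isomorphism $(\Phi,\phi)$, the base diffeomorphism $\phi$ carries $f_1(\Crit(f_1)\cup\Crit(f_1|_{S_1}))$ to the corresponding set for $f_2$, so I would transport the path system $\{\alpha_i\}$ and the identification of $(f_1^{-1}(p_0),f_1^{-1}(p_0)\cap S_1)$ with $(\Sigma_g,\{s_1,\ldots,s_n\})$ through $(\Phi,\phi)$ to obtain matched choices for the second triple. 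Since $\Phi$ intertwines parallel transports (carrying a horizontal distribution $\CH$, which we may assume preserved, to one for $f_2$), the two monodromy factorizations computed with these matched choices coincide as words, and as recorded just before the statement any two systems of choices for one triple alter its factorization only by elementary transformations and global conjugations; hence $W_{X_1,f_1,S_1}$ and $W_{X_2,f_2,S_2}$ are Hurwitz equivalent.

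To prove $(\star)$ I would reconstruct $(X,f,S)$ in two stages. Over the punctured base $S^2\setminus\{a_1,\ldots,a_{k+l}\}$ the triple restricts to a bundle pair with fiber $(\Sigma_g,\{s_1,\ldots,s_n\})$ and structure group $\Diff(\Sigma_g;\{s_1,\ldots,s_n\})$, whose isomorphism type is classified by a homotopy class of maps into $B\Diff(\Sigma_g;\{s_1,\ldots,s_n\})$. This is exactly where the hypothesis $2-2g-n<0$ enters: for a hyperbolic marked surface the identity component $\Diff_0(\Sigma_g;\{s_1,\ldots,s_n\})$ is contractible (Earle--Eells and Gramain), so $B\Diff\simeq B\Mod$, and a bundle over the aspherical base $S^2\setminus\{a_i\}$, which is homotopy equivalent to a wedge of circles, is determined up to fiber-preserving, marked-point-preserving diffeomorphism by its monodromy representation $\pi_1(S^2\setminus\{a_i\},p_0)\to\Mod(\Sigma_g;\{s_1,\ldots,s_n\})$, that is, by the factors $\xi_i$ read along $W$.

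In the second stage I would glue in neighborhoods of the $a_i$ using the three standard local models attached to the three admissible factor types: the Lefschetz model $z^2+w^2$ for a Dehn-twist vanishing cycle, the simple branched-cover model for a half-twist $\tau_{\gamma_i}$, and the lifted model of Figure~\ref{F:lift dehntwist} for a $\widetilde{t_{c_i}}$ through two marked points. Each local piece is attached by a fiber identification pinned down, up to isotopy, by the corresponding mapping class $\xi_i$; invoking once more the contractibility of $\Diff_0(\Sigma_g;\{s_1,\ldots,s_n\})$ so that isotopic gluings yield diffeomorphic fillings and no residual monodromy ambiguity survives, the assembled triple is determined up to isomorphism by $W$, establishing $(\star)$. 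I would then check directly that an elementary transformation corresponds to sliding two of the vanishing paths past one another around the relevant $a_i$, while a global conjugation corresponds to post-composing the reference identification with a representative of $\psi\in\Mod(\Sigma_g;\{s_1,\ldots,s_n\})$; each is a genuine change of choices within a fixed triple, so the two moves are geometrically realizable as required.

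The main obstacle I anticipate is the contractibility input for the \emph{marked} surface together with the correct handling of the new, multisection-specific local models. In the classical $n=0$ case of Kas and Matsumoto only Dehn-twist factors occur and one invokes Earle--Eells for $\Sigma_g$ with $g\ge 2$; here the structure group must preserve the marked set $\{s_1,\ldots,s_n\}$, the admissible factors include half-twists and the lifted twists $\widetilde{t_c}$ of Figure~\ref{F:lift localmonodromy}, and the pertinent contractibility statement is the hyperbolic one governed precisely by $2-2g-n<0$. Verifying that these branch-point and lifted local models glue uniquely up to isotopy, and that no residual $\pi_1(\Diff_0(\Sigma_g;\{s_1,\ldots,s_n\}))$-ambiguity enters the reconstruction, is the delicate point on which the whole equivalence rests.
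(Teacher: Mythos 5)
Your overall strategy is the same as the paper's: handle the easy direction by transporting the path system and reference identification through the isomorphism, and for the converse reconstruct/compare the triples over the punctured base via the monodromy representation and then fill in the local models around the critical values, using the triviality of $\pi_1(\Diff(\Sigma_g;\{s_1,\ldots,s_n\}))$ (the paper's Lemma~\ref{T:contractibility Diff}, which proves simple-connectivity rather than full contractibility --- that is all that is needed) to kill residual gluing ambiguity. The forward direction and the bundle-classification stage are fine.

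There is, however, a genuine gap in your second stage, at the sentence asserting that each local piece is ``pinned down, up to isotopy, by the corresponding mapping class $\xi_i$.'' To build (or match) the local model at $a_i$ you need the underlying geometric datum --- the vanishing cycle $c$, the arc $\gamma$ between two marked points, or the curve $c$ through two marked points --- and the reconstruction only hands you the mapping class $t_c$, $\tau_\gamma$, or $\widetilde{t_c}$. So you must prove that $\tau_{\gamma_1}=\tau_{\gamma_2}$ in $\Mod(\Sigma_g;\{s_1,\ldots,s_n\})$ forces $\gamma_1$ and $\gamma_2$ to be isotopic rel the marked points, and likewise that $\widetilde{t_{c_1}}=\widetilde{t_{c_2}}$ forces $c_1\simeq c_2$. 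For ordinary Dehn twists this is classical, but for the two new factor types it is not automatic: it is exactly the content of the paper's Lemmas~\ref{T:isotopy equality half twist} and \ref{T:isotopy equality lift Dehn twist} (with Lemma~\ref{T:non commutable classes} as an auxiliary step), whose bigon-criterion case analyses constitute most of the technical work in the paper's proof. That this cannot be waved through is underscored by the paper's remark that the analogous statement is \emph{false} in the framed mapping class group $\Mod(\Sigma_g^n;\{u_1,\ldots,u_n\})$, where non-isotopic arcs can induce equal half-twists. Without these injectivity lemmas, two factorizations with literally identical factors could a priori come from non-isotopic local data, and your claim $(\star)$ --- that the word determines the triple --- would not follow.
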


To prove Theorem~\ref{T:equivalence multisection factorization}, we will need a few preliminary results on mapping classes in  $\Mod(\Sigma_g;\{s_1,\ldots,s_n\})$. 

\begin{lemma}\label{T:isotopy equality half twist}
Let $\gamma_1, \gamma_2\subset \Sigma_g$ be simple paths between distinct marked points $s_i$ and $s_j$. Then $\tau_{\gamma_1}=\tau_{\gamma_2}$ in $\Mod(\Sigma_g;\{s_1,\ldots,s_n\})$ if and only if $\gamma_1$ and $\gamma_2$ are isotopic relative to the points $s_1,\ldots,s_n$. 

\end{lemma}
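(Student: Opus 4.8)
The plan is to prove the two directions separately, with the "only if" direction being essentially formal and the "if" direction requiring the hyperbolicity hypothesis. First I would establish the easy implication: if $\gamma_1$ and $\gamma_2$ are isotopic relative to the marked points $\{s_1,\ldots,s_n\}$, then the isotopy induces a self-diffeomorphism of $\Sigma_g$ fixing the marked points setwise and carrying a regular neighborhood of $\gamma_1$ to one of $\gamma_2$, so the half twists $\tau_{\gamma_1}$ and $\tau_{\gamma_2}$ are conjugate by an element isotopic to the identity; hence they agree in $\Mod(\Sigma_g;\{s_1,\ldots,s_n\})$. This direction uses only the naturality of the half-twist construction under isotopy and needs no hypotheses on $g$ and $n$.

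For the converse---assume $\tau_{\gamma_1}=\tau_{\gamma_2}$ and deduce that $\gamma_1,\gamma_2$ are isotopic rel the marked points---the key is to recover the path from the mapping class intrinsically. My approach would be to encode the path $\gamma$ through a distinguished isotopy class of simple closed curve. Specifically, a half twist $\tau_\gamma$ along a path joining $s_i$ to $s_j$ has support in a disk neighborhood $D_\gamma$ of $\gamma$ containing exactly the two marked points $s_i,s_j$; the boundary $\partial D_\gamma$ is a simple closed curve $c_\gamma$ enclosing precisely $s_i$ and $s_j$, and $\tau_\gamma^2 = t_{c_\gamma}$ is the Dehn twist about $c_\gamma$ (inside the marked-points mapping class group). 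Thus from $\tau_{\gamma_1}=\tau_{\gamma_2}$ we obtain $t_{c_{\gamma_1}}=t_{c_{\gamma_2}}$, and I would invoke the standard fact that under the hyperbolicity assumption $2-2g-n<0$ (equivalently, the surface $\Sigma_g$ with the $n$ marked points treated as punctures admits a hyperbolic structure) Dehn twists about simple closed curves are equal in the mapping class group if and only if the curves are isotopic---here isotopic rel the punctures. This reduces the problem to showing $c_{\gamma_1}$ isotopic to $c_{\gamma_2}$ implies $\gamma_1$ isotopic to $\gamma_2$ rel marked points.

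The final step is to pass from the isotopy of the enclosing curves $c_{\gamma_i}$ back to an isotopy of the paths $\gamma_i$ themselves. I would argue that once $c_{\gamma_1}$ and $c_{\gamma_2}$ are isotopic rel punctures, one can isotope so that $D_{\gamma_1}=D_{\gamma_2}=:D$, a disk with two marked points; then both $\gamma_1$ and $\gamma_2$ become arcs in $D$ joining the two marked points, and in a twice-marked disk any two such arcs are isotopic rel endpoints only up to the choice recorded by the half twist. The point is that the half-twist data $\tau_{\gamma_i}$ (not merely its square $t_{c_\gamma}$) pins down the arc uniquely: the equality $\tau_{\gamma_1}=\tau_{\gamma_2}$, restricted to the disk $D$, forces the two arcs to be isotopic rel the two marked points, since in $\Mod(D;\{s_i,s_j\})$ (the braid/mapping class group of the twice-punctured disk) the half twist determines its defining arc up to rel-endpoint isotopy.

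The main obstacle I expect is the last step: extracting an honest isotopy of the \emph{paths} from an isotopy of the \emph{boundary curves}, because passing to the square $\tau_\gamma^2=t_{c_\gamma}$ loses the orientation/direction information that distinguishes the two half twists with the same enclosing curve. Care is needed to show that the finer equality $\tau_{\gamma_1}=\tau_{\gamma_2}$ (rather than just $t_{c_{\gamma_1}}=t_{c_{\gamma_2}}$) rigidly determines the arc. One clean way to handle this is to work throughout in the punctured-surface mapping class group and use the change-of-coordinates principle together with the known structure of centralizers of half twists, or alternatively to appeal to the Alexander method for arcs rel marked points; either route avoids the ambiguity inherent in squaring and delivers the rel-marked-point isotopy of $\gamma_1$ and $\gamma_2$ directly.
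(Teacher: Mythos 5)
Your proof is correct in substance, but it takes a genuinely different route from the paper's. The paper argues directly with the bigon criterion: taking $d$ to be the boundary of a regular neighborhood of $\gamma_1$, it shows $i(d,\gamma_2)\neq 0$ when the arcs are non-isotopic, and then computes $i(d,\tau_{\gamma_2}(d))=2\,i(d,\gamma_2)^2\neq 0=i(d,\tau_{\gamma_1}(d))$ after a careful case analysis ruling out bigons; this is self-contained and needs no hypothesis on $g$ and $n$. You instead square the half twists to get $t_{c_{\gamma_1}}=t_{c_{\gamma_2}}$, invoke the standard injectivity of $c\mapsto t_c$ on essential curves, and then recover the arc from its enclosing curve. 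This outsources the combinatorics to a cited fact (whose proof is essentially the same test-curve computation the paper performs by hand), at the cost of two caveats you should make explicit. First, $c_\gamma$ must be \emph{essential} for the Dehn-twist injectivity to apply: it bounds a twice-marked disk on one side and a genus-$g$ subsurface with $n-2$ marked points on the other, so this holds whenever $g\geq 1$ or $g=0,\ n\geq 4$, but fails on the thrice-marked sphere (where $t_{c_\gamma}=1$); that case must be disposed of separately, which is easy since all arcs between two of the three marked points are then isotopic. Second, and more pleasantly, the "main obstacle" you flag in your last paragraph is not actually there: the isotopy class of the arc \emph{is} determined by that of $c_\gamma$, because an ambient isotopy carrying $c_{\gamma_1}$ to $c_{\gamma_2}$ must carry the twice-marked disk side to the twice-marked disk side (it fixes each marked point), and in a disk with exactly two marked points any two embedded arcs joining them are isotopic rel the marked points (the mapping class group of such a disk, not rel boundary, is $\mathbb{Z}/2$ generated by the half twist, which preserves the arc). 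The only data lost in squaring is the handedness distinguishing $\tau_\gamma$ from $\tau_\gamma^{-1}$, and both of those are twists along the same arc, so no appeal to centralizers or the Alexander method is needed. Finally, note that the opening sentence of your write-up swaps the labels "if" and "only if" relative to the argument that follows; the body is consistent, but the labels should be corrected.
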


\begin{proof}
The ``if'' part is obvious.  To prove the ``only if'' part we assume that $\gamma_1$ and $\gamma_2$ are not isotopic and show that $\tau_{\gamma_1}$ and $\tau_{\gamma_2}$ are not equal. For simple curves $d_1, d_2$, we denote the geometric intersection number of $d_1$ and $d_2$ by $i(d_1,d_2)$, that is, $i(d_1,d_2)$ is the minimum number of intersections between two curves isotopic (relative to $s_1,\ldots,s_n$) to $d_1$ and $d_2$. 

Without loss of generality we may assume that $\gamma_1$ and $\gamma_2$ are in minimal position. 
Let $d$ be the boundary of a regular neighborhood of $\gamma_1$. 
The curve $d$ does not intersect $\gamma_1$.
On the other hand, $i(d,\gamma_2)$ is not equal to $0$.  
To see this, we will check that there is no bigon between sub-paths of $d$ and $\gamma_2$ (see the bigon criterion in \cite{Farb_Margalit_2011}). 
As shown in Figure~\ref{F:simple closed curve d} there are three types of regions which are candidates of such bigons. 
\begin{figure}[htbp]
\centering
\subfigure[]{\includegraphics[height=25mm]{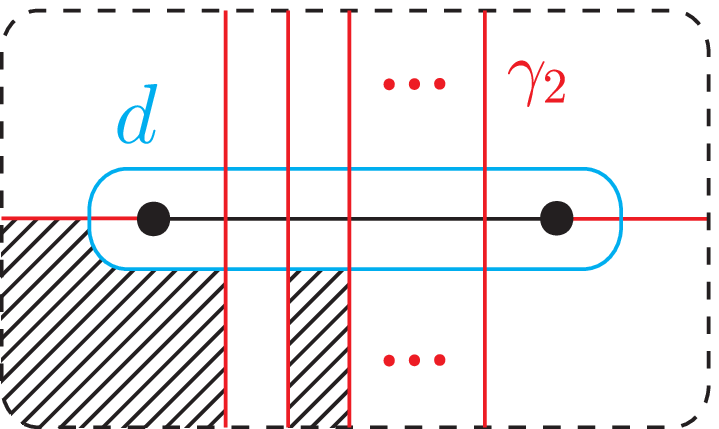}
\label{F:simple closed curve d1}}
\hspace{.8em}
\subfigure[]{\includegraphics[height=25mm]{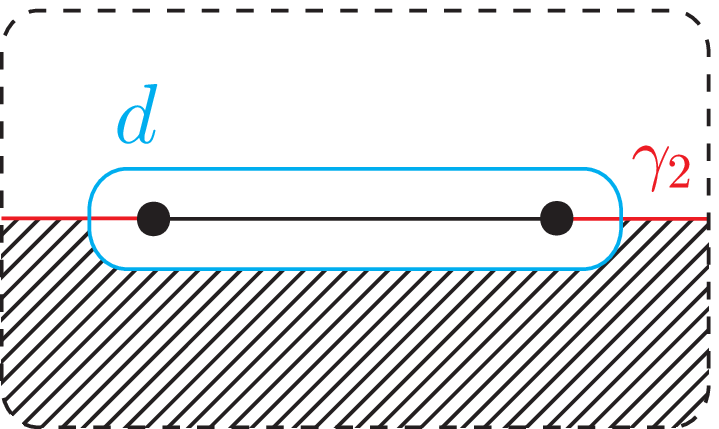}
\label{F:simple closed curve d2}}
\caption{Candidates of bigons around $d$.}
\label{F:simple closed curve d}
\end{figure}
The two shaded regions in Figure~\ref{F:simple closed curve d1} cannot be bigons since $\gamma_1$ and $\gamma_2$ are in minimal position. 
Since $\gamma_1$ and $\gamma_2$ are not isotopic, the shaded region in Figure~\ref{F:simple closed curve d2} is not a bigon. 

Since $i(d,\tau_{\gamma_1}(d))$ is equal to $0$, it is sufficient to prove that $i(d,\tau_{\gamma_2}(d))$ is not equal to $0$.  
Let $d'$ be the simple closed curve obtained by changing a parallel copy of $d$ around $\gamma_2$ as shown in Figure~\ref{F:result halftwist}. 
The curve $d'$ represents the isotopy class $\tau_{\gamma_2}(d)$. 
\begin{figure}[htbp]
\centering
\includegraphics[height=25mm]{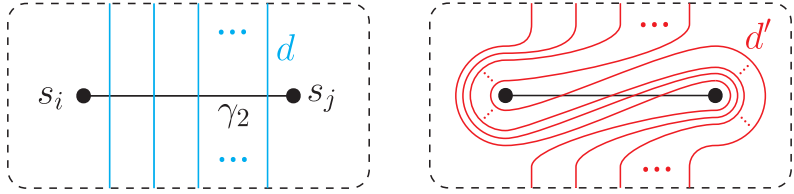}
\caption{The curves $d$ and $d'$ around $\gamma_2$. }
\label{F:result halftwist}
\end{figure}
It is easy to see that the number of the intersections between $d$ and $d'$ is equal to $2i(d,\gamma_2)^2$. 
In what follows, we will prove that $d$ and $d'$ are in minimal position using the bigon criterion. 
As shown in Figure~\ref{F:candidate bigon} there are six types of regions which are candidates of innermost bigons. 
\begin{figure}[htbp]
\centering
\subfigure[]{
\includegraphics[height=25mm]{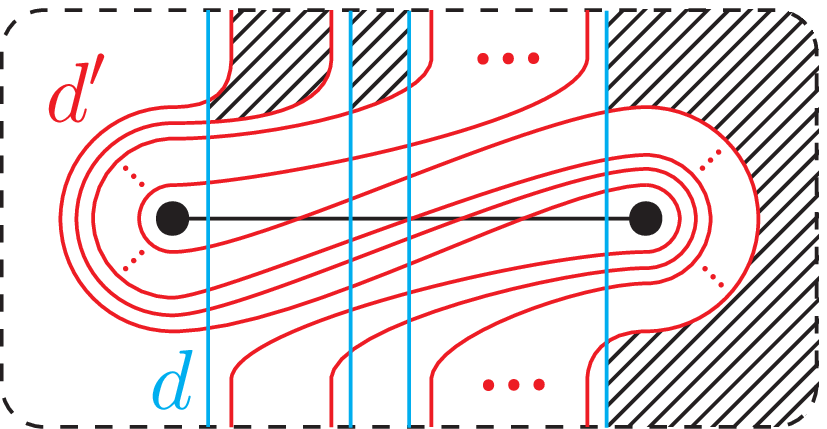}
\label{F:candidate bigon1}
}
\hspace{.8em}
\subfigure[]{
\includegraphics[height=25mm]{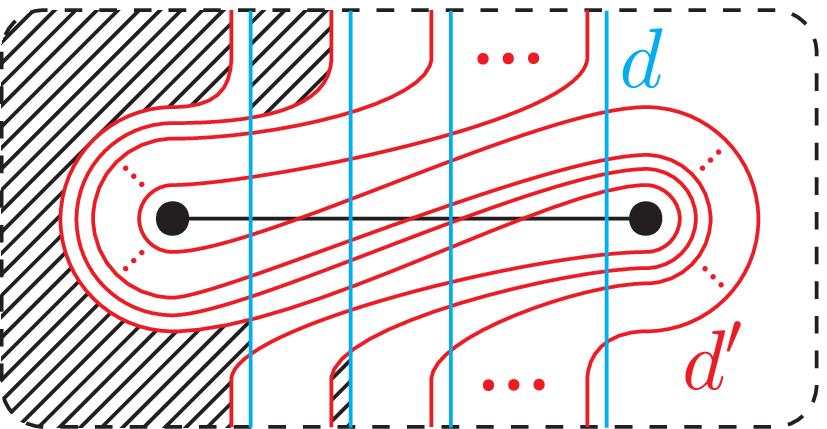}
\label{F:candidate bigon2}
}
\caption{Shaded regions are candidates of bigons. }
\label{F:candidate bigon}
\end{figure}
If the far right region in Figure~\ref{F:candidate bigon1} were a bigon, $d$ would be isotopic to a small circle with center $s_j$, but it is not the case since $d$ does not intersect $\gamma_1$. 
Similarly, we can also verify that the far left region in Figure~\ref{F:candidate bigon2} is not a bigon. 
If either of the rest of two regions in Figure~\ref{F:candidate bigon1} were a bigon, then $d$ and $\gamma_2$ would form a bigon, which contradicts the assumption that $d$ and $\gamma_2$ are in minimal position. 
As for the rest of two regions in Figure~\ref{F:candidate bigon2}, either the boundary of them contain at least two sub-paths of $d$, or they contain the point $s_i$ or $s_j$. 
In either case, these regions cannot be bigons. 
We can eventually conclude that $d'$ and $d$ are in minimal position, and thus, $i(d,\tau_{\gamma_2}(d)) = 2i(d,\gamma_2)^2\neq 0$. 
\end{proof}

\begin{lemma}\label{T:non commutable classes}

Let $c\subset \Sigma_g$ be a simple closed curve going through $s_i$ and $s_j$ which is not null-homotopic (as a curve in $\Sigma_g$), and $\gamma\subset \Sigma_g$ the closure of a component of $c \setminus \{s_i,s_j\}$.
For any $N\in \mathbb{Z}\setminus \{0\}$ the mapping classes $\widetilde{t_c}$ and $\tau_{\gamma}^N$ do not commute, that is, $[\widetilde{t_c},\tau_{\gamma}^N] \neq 1$ in $\Mod(\Sigma_g;\{s_1,\ldots,s_n\})$. 

\end{lemma}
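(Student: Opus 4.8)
The plan is to reduce the commutator to a comparison of two powers of half twists and then separate them by a geometric intersection number, in the spirit of the proof of Lemma~\ref{T:isotopy equality half twist}. Write $\gamma' = \overline{c \setminus \gamma}$ for the closure of the other component of $c \setminus \{s_i,s_j\}$, so that $c = \gamma \cup \gamma'$ and both $\gamma,\gamma'$ are simple paths between $s_i$ and $s_j$. The first step is a change-of-coordinates computation: since $\widetilde{t_c}$ is an orientation-preserving self-diffeomorphism of $(\Sigma_g;\{s_1,\ldots,s_n\})$, conjugation carries a right-handed half twist to the half twist along the image arc, so $\widetilde{t_c}\,\tau_\gamma^N\,\widetilde{t_c}^{-1} = \tau_{\widetilde{t_c}(\gamma)}^N$ for every $N$. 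Reading off the local model of the lifted Dehn twist in Figure~\ref{F:lift dehntwist} --- which interchanges $s_i$ and $s_j$ while shifting the core curve $c$ by a half rotation --- one sees that $\widetilde{t_c}(\gamma) = \gamma'$. Consequently
\[
[\widetilde{t_c},\tau_\gamma^N] = \widetilde{t_c}\,\tau_\gamma^N\,\widetilde{t_c}^{-1}\,\tau_\gamma^{-N} = \tau_{\gamma'}^N\,\tau_\gamma^{-N},
\]
and it remains to show that $\tau_{\gamma'}^N \neq \tau_\gamma^N$ in $\Mod(\Sigma_g;\{s_1,\ldots,s_n\})$ for every $N \neq 0$.

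Next I would record that $\gamma$ and $\gamma'$ are genuinely distinct arcs: were $\gamma$ isotopic to $\gamma'$ relative to its endpoints, then the two arcs would cobound a disk and $c = \gamma \cup \gamma'$ would be null-homotopic, contradicting the hypothesis that $c$ is essential. To separate the powers I would test them against the curve $d = \partial N(\gamma)$, the boundary of a regular neighborhood of $\gamma$. Since $\tau_\gamma$ is supported inside $N(\gamma)$, we have $\tau_\gamma^N(d) = d$, so it suffices to prove $i(d,\tau_{\gamma'}^N(d)) \neq 0$. The point is that $\gamma'$ cannot be isotoped into the disk $N(\gamma)$ (otherwise it would agree with $\gamma$ as above), so it must leave this disk and $i(d,\gamma') \geq 2 > 0$; in particular $d$ meets $d' = \partial N(\gamma')$ essentially.

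The main work --- and the step I expect to be the real obstacle --- is then showing $i(d,\tau_{\gamma'}^N(d)) > 0$ for all $N \neq 0$, the power analogue of the computation in Lemma~\ref{T:isotopy equality half twist}. I would handle it by the same bigon-criterion analysis used there: one produces an explicit representative of $\tau_{\gamma'}^N(d)$ by dragging a parallel copy of $d$ through the $N$-fold braiding prescribed by $\tau_{\gamma'}$, and checks via the bigon criterion that the resulting intersections with $d$ cannot all be removed, so the count stays positive for every $N \neq 0$. Alternatively, for even $N = 2m$ one can shortcut using $\tau_{\gamma'}^{2} = t_{d'}$ together with the standard lower bound $i(t_{d'}^{m}(d),d) \geq |m|\, i(d,d')^{2} > 0$ for Dehn-twist powers, and treat the remaining odd exponents by the direct single-half-twist estimate. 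Either way $\tau_{\gamma'}^N(d) \neq d = \tau_\gamma^N(d)$, whence $\tau_{\gamma'}^N \neq \tau_\gamma^N$ and the commutator displayed above is nontrivial, which completes the proof.
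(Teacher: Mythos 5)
Your proposal is correct and follows essentially the same strategy as the paper: reduce the non-commutation to distinguishing two explicit simple closed curves obtained from a regular neighborhood boundary, and separate them with the bigon criterion. The repackaging of the commutator as $\tau_{\gamma'}^N\tau_\gamma^{-N}$ (via $\widetilde{t_c}\,\tau_\gamma^N\,\widetilde{t_c}^{-1}=\tau_{\widetilde{t_c}(\gamma)}^N$ and $\widetilde{t_c}(\gamma)=\gamma'$) is a clean and valid observation; the paper instead applies $\widetilde{t_c}$ and its conjugate directly to $d$, obtaining $d'$ versus $\tau_\gamma^N(d')$, which by the symmetry between $\gamma$ and $\gamma'$ is the same comparison you arrive at. The one substantive difference is where the real work lands and how it is discharged: the paper measures $\tau_\gamma^N(d')$ against the curve $c$ itself, exhibiting exactly $4N+2$ intersection points and ruling out bigons (Figure~\ref{F:half twisted curve}), whereas you measure $\tau_{\gamma'}^N(d)$ against $d$ and leave the minimal-position verification as a sketch --- which you rightly flag as the main obstacle. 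Your even-exponent shortcut via $\tau_{\gamma'}^{2}=t_{d'}$ and the standard intersection bound for Dehn twist powers is legitimate (one needs $i(d,d')\geq 2$, which follows since otherwise $\gamma$ and $\gamma'$ would be isotopic and $c$ null-homotopic), but it does not dispose of odd $N$: writing $\tau_{\gamma'}^{N}=t_{d'}^{(N-1)/2}\tau_{\gamma'}$ and applying the twist inequality leaves an error term that must be controlled, so the odd case still requires essentially the full bigon analysis. In short, the skeleton is sound and matches the paper; to make the argument complete you must carry out the explicit minimal-position computation for $\tau_{\gamma'}^N(d)$ for all $N\neq 0$, which is precisely the content of the paper's Figure~\ref{F:half twisted curve} argument.
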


\begin{proof}
Since $\widetilde{t_c}$ commutes with $\tau_{\gamma}^N$ if and only if $\widetilde{t_{\tau_{\gamma}^N(c)}}$ commutes with $\tau_{\gamma}^{-N}$, we may assume $N>0$ without loss of generality. 
Let $\gamma'$ be the closure of the complement $c\setminus \gamma$ and $d$ and $d'$ the boundaries of regular neighborhoods of $\gamma$ and $\gamma'$, respectively. 
The curve $\widetilde{t_c}(d)$ is isotopic to $d'$, while $\tau_{\gamma}^N\widetilde{t_c}\tau_{\gamma}^{-N}(d)$ is isotopic to $\tau_{\gamma}^N(d')$. 
We will prove that $d'$ and $\tau_{\gamma}^N(d')$ are not isotopic by showing $i(\tau_{\gamma}^N(d'),c) \neq i(d',c)=2$. 

The curve $\tau_{\gamma}^N(d')$ is described in Figure~\ref{F:half twisted curve}. 
\begin{figure}[htbp]
\centering
\includegraphics[height=25mm]{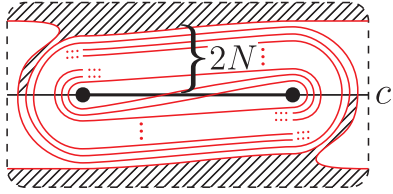}
\caption{The curve $\tau_{\gamma}^N(d')$. 
The central bold curve is $\gamma$. }
\label{F:half twisted curve}
\end{figure}
It intersects with $c$ at $4N+2$ points. 
We easily see that none of the regions made by $\tau_{\gamma}^N(d')$ and $c$ in Figure~\ref{F:half twisted curve}, except for the shaded ones, can be bigons.
Furthermore, neither of the shaded regions can be a bigon since $c$ is not null-homotopic in $\Sigma_g$.
Thus the curve $\tau_{\gamma}^N(d')$ is in minimal position with $c$, and $i(c,\tau_{\gamma}^N(d')) = 4N +2 \neq 2$ for $N \neq 0$, as claimed. 
\end{proof}

\begin{lemma}\label{T:isotopy equality lift Dehn twist}
Let $c_1, c_2\subset \Sigma_g$ be simple closed curves going through $s_i$ and $s_j$ which are not null-homotopic (as curves in $\Sigma_g$). 
Then $\widetilde{t_{c_1}}=\widetilde{t_{c_2}}$ in $\Mod(\Sigma_g;\{s_1,\ldots,s_n\})$ if and only if $c_1$ and $c_2$ are isotopic relative to the points $s_1,\ldots,s_n$. 
\end{lemma}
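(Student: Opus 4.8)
The plan is to mirror the strategy of the two preceding lemmas --- detecting non-isotopy of curves through geometric intersection numbers and the bigon criterion --- but to first discard the marked points so as to exploit the classical rigidity of Dehn twists, and only afterwards recover the finer information carried by the marked points. The ``if'' direction is immediate: an isotopy of $c_1$ onto $c_2$ relative to $\{s_1,\ldots,s_n\}$ carries a supporting representative of $\widetilde{t_{c_1}}$ to one of $\widetilde{t_{c_2}}$, so the two mapping classes agree. For the ``only if'' direction I assume $\widetilde{t_{c_1}} = \widetilde{t_{c_2}}$ and proceed in two stages.

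\emph{Stage one: free isotopy.} I apply the forgetful homomorphism $\Mod(\Sigma_g;\{s_1,\ldots,s_n\}) \to \Mod(\Sigma_g)$, under which each lift $\widetilde{t_{c_i}}$ is sent to the ordinary Dehn twist $t_{c_i}$, as recorded in Section~\ref{S:factorization multisection}. The hypothesis then yields $t_{c_1} = t_{c_2}$ in $\Mod(\Sigma_g)$. Since $c_1$ and $c_2$ are non-null-homotopic, the classical injectivity of the assignment $c\mapsto t_c$ on isotopy classes of essential simple closed curves (see \cite{Farb_Margalit_2011}) forces $c_1$ and $c_2$ to be isotopic as \emph{unmarked} curves in $\Sigma_g$.

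\emph{Stage two: upgrading to an isotopy rel marked points.} This is where the content of the lemma resides. Arguing by contradiction, I assume $c_1$ and $c_2$ are freely isotopic but \emph{not} isotopic relative to $\{s_1,\ldots,s_n\}$, and I produce a simple closed curve on which $\widetilde{t_{c_1}}$ and $\widetilde{t_{c_2}}$ act differently. Let $\gamma_i$ and $\gamma_i'$ be the two arcs into which $s_i$ and $s_j$ cut $c_i$, and let $d_i = \partial N(\gamma_i)$ and $d_i' = \partial N(\gamma_i')$ be the boundaries of regular neighborhoods of these arcs. I take $d_1$ as the test curve. The computation already isolated in the proof of Lemma~\ref{T:non commutable classes} shows that $\widetilde{t_{c_1}}(d_1)$ is isotopic to $d_1'$, so the assumed equality of mapping classes gives $\widetilde{t_{c_2}}(d_1) \simeq d_1'$. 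I would then draw $\widetilde{t_{c_2}}(d_1)$ explicitly, with $c_1$ and $c_2$ placed in minimal position while keeping the shared marked points $s_i, s_j$ fixed, and run the bigon criterion exactly as in Lemmas~\ref{T:isotopy equality half twist} and \ref{T:non commutable classes}. The goal is to show that $i(c_1,\widetilde{t_{c_2}}(d_1)) > 2 = i(c_1, d_1')$ whenever $c_2$ threads $s_i$ and $s_j$ differently from $c_1$, contradicting $\widetilde{t_{c_2}}(d_1) \simeq d_1'$.

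The hard part will be Stage two. The marked-point refinement is genuinely needed: two essential curves through the same pair of marked points can be freely isotopic yet wind past $s_i$ and $s_j$ in inequivalent ways, and the Dehn-twist rigidity of Stage one is blind to this. The delicate work is to organize the possible minimal-position configurations of $c_1$ and $c_2$ (subject to both passing through $s_i$ and $s_j$) into finitely many local pictures and to verify, in each, that no bigon can absorb the surplus intersections --- the same bigon bookkeeping as in the previous two lemmas, but now complicated by the marked points lying on the curves themselves.
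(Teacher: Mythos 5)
Your Stage one is a clean and valid reduction the paper does not make: the forgetful homomorphism $\Mod(\Sigma_g;\{s_1,\ldots,s_n\})\to\Mod(\Sigma_g)$ does send $\widetilde{t_{c_i}}$ to $t_{c_i}$, and the classical injectivity of $c\mapsto t_c$ on essential curves then gives free isotopy of $c_1$ and $c_2$. The ``if'' direction is also fine. But the proposal as written does not prove the lemma, because Stage two --- which you yourself identify as carrying all the content --- is only a plan. The paper's entire proof of the ``only if'' direction consists precisely of the work you defer: an exhaustive case analysis (three cases, each with several sub-configurations and figures) of the minimal-position arrangements of $c_1$ and $c_2$ near $s_i$ and $s_j$, with a bigon-criterion verification in each. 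Saying ``I would then draw $\widetilde{t_{c_2}}(d_1)$ explicitly \ldots and run the bigon criterion'' is a statement of intent, not an argument; nothing in the proposal establishes that the surplus intersections survive in every configuration.

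There is also a concrete reason to doubt that your single fixed test curve $d_1=\partial N(\gamma_1)$ suffices uniformly. The paper's strategy is different: it seeks a curve $c_3$ \emph{disjoint from $c_1$} (hence fixed by $\widetilde{t_{c_1}}$) with $i(c_2,c_3)\neq 0$, so that the conclusion follows as in the standard proof that $t_a=t_b$ forces $a\simeq b$; and to find such a $c_3$ it must try several different candidates (push-offs of $c_1$ on either side, copies rerouted around nested bigons, copies taken ``inside'' $c_1$), because any one candidate can fail. In the degenerate sub-case where every such candidate ends up disjoint from $c_2$, the curves satisfy $c_2=\tau_\gamma^N(c_1)$ and the paper must fall back on the separate non-commutation statement, Lemma~\ref{T:non commutable classes}. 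Your proposal gives no evidence that the inequality $i(c_1,\widetilde{t_{c_2}}(d_1))>2$ holds in all configurations (note in particular that $c_1$ passes through marked points, so bigons touching $s_i$ or $s_j$ must be ruled out separately), nor a mechanism for switching test curves when it fails. Finally, the free isotopy obtained in Stage one buys less than you suggest: freely isotopic curves through $s_i,s_j$ can still differ by arbitrary point-pushes and powers of half-twists, so the combinatorics of the minimal-position pictures is not materially reduced. The gap, then, is that the core case analysis is missing, and the one concrete device you commit to is not shown to work in all cases.
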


\begin{proof}
Once again the ``if'' part of the statement is obvious. 
Assume that $c_1$ and $c_2$ are not isotopic. 
We will prove that $\widetilde{t_{c_1}}$ and $\widetilde{t_{c_2}}$ are not equal. 
By an isotopy relative to the points $s_1,\ldots,s_n$, we change $c_1$ and $c_2$ so that these are in minimal position. 
We first note that, if we can find a simple closed curve $c_3\subset \Sigma_g\setminus\{s_1,\ldots,s_n\}$ away from $c_1$ such that $i(c_2,c_3)$ is not equal to $0$, we can prove that $\widetilde{t_{c_1}}$ and $\widetilde{t_{c_2}}$ are different mapping classes in the same way as in the proof of \cite[Fact 3.6]{Farb_Margalit_2011}. 

\vspace{.5em}

\noindent
{\bf Case 1}~:~Suppose that both of the components of $c_1\setminus \{s_i,s_j\}$ intersect $c_2$. 
We may assume that, at each of the points $s_i$ and $s_j$, either $c_1$ and $c_2$ intersect transversely or these are tangent to each other.
Let $\nu c_1$ be a tubular neighborhood of $c_1$ and $U_i, U_j$ small neighborhoods of $s_i, s_j$. 
If $c_1$ and $c_2$ are tangent to each other at both of the points, then one of the following holds: 

\begin{itemize}

\item
the intersections $U_i\cap c_2$ and $U_j\cap c_2$ are contained in the same component of $\nu c_1 \setminus c_1$ (see Figure~\ref{F:bigon pair path3}), 

\item
the component of $\nu c_1 \setminus c_1$ containing $U_i\cap c_2$ is different from that containing $U_j\cap c_2$ (see Figure~\ref{F:bigon pair path additional 1}). 

\end{itemize}

\noindent
Altogether we have to consider the four cases described in Figures~\ref{F:bigon pair path1}~--~\ref{F:bigon pair path additional 1}. 
For each case we take a parallel copy $c_1'$ of $c_1$ as shown in the figures. 
\begin{figure}[htbp]
\centering
\subfigure[]{
\includegraphics[height=37mm]{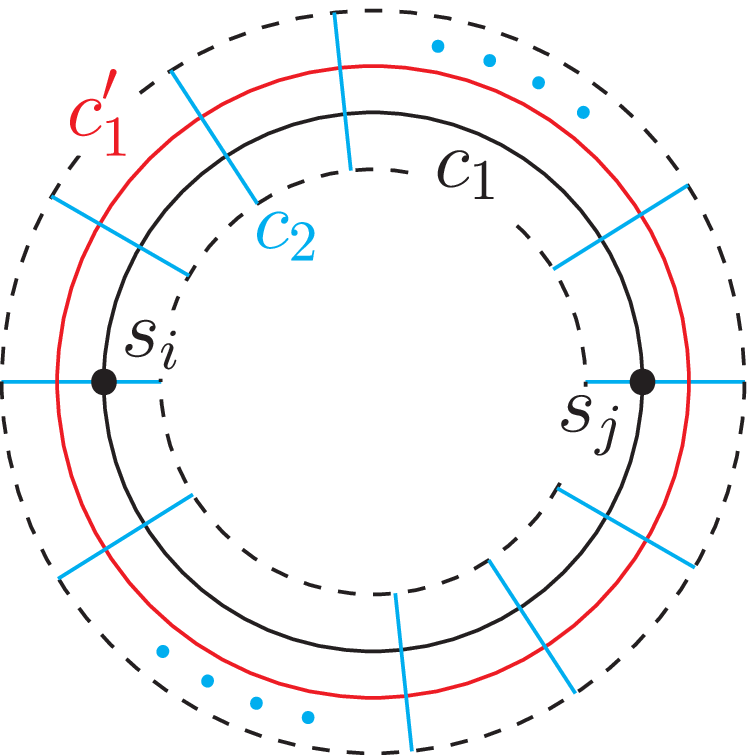}
\label{F:bigon pair path1}
}
\hspace{.3em}
\subfigure[]{
\includegraphics[height=37mm]{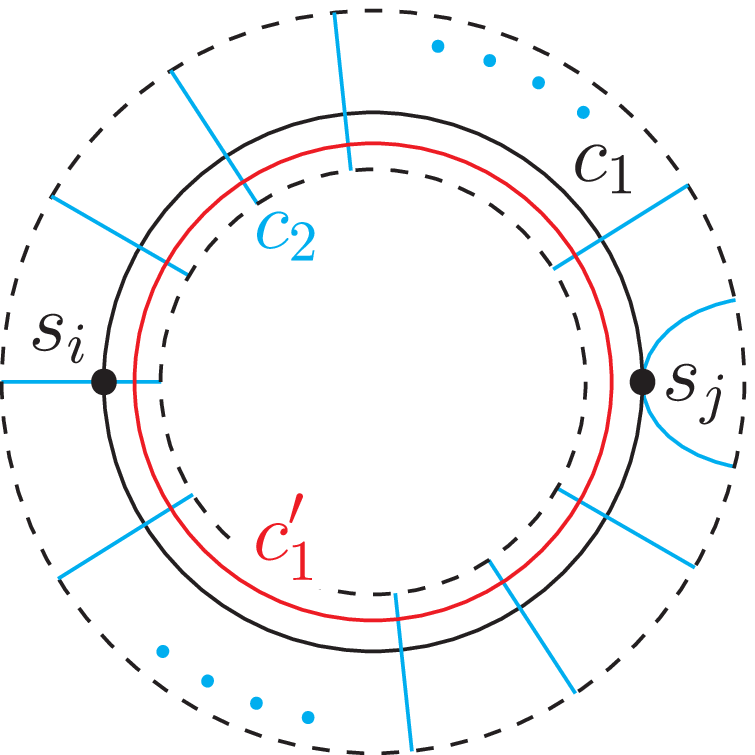}
\label{F:bigon pair path2}
}

\subfigure[]{
\includegraphics[height=37mm]{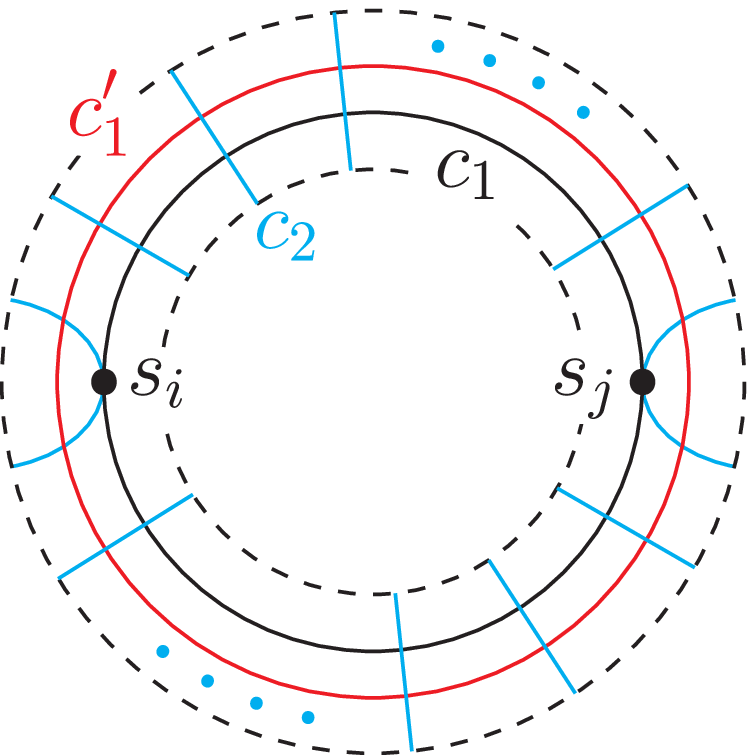}
\label{F:bigon pair path3}
}
\hspace{.3em}
\subfigure[]{
\includegraphics[height=37mm]{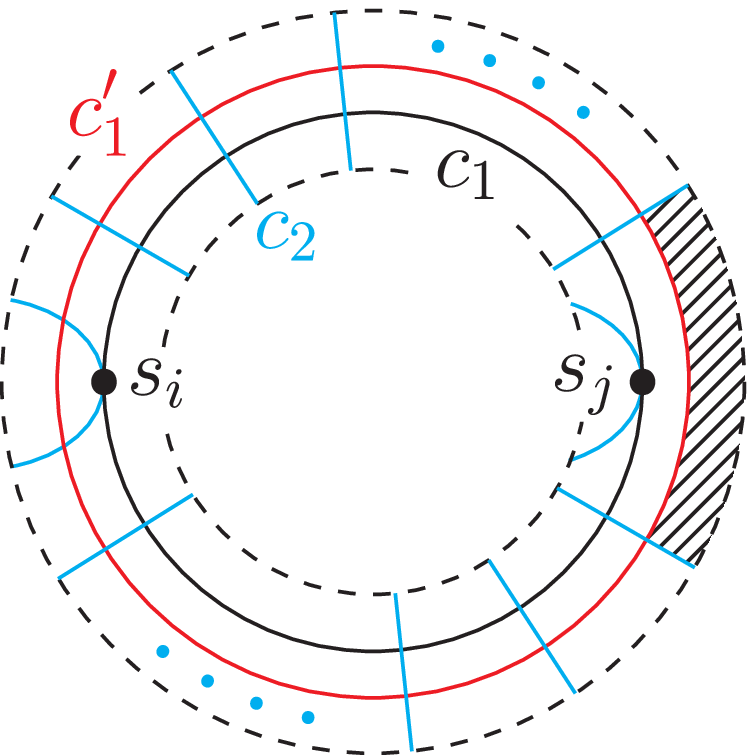}
\label{F:bigon pair path additional 1}
}
\hspace{.3em}
\subfigure[]{
	\includegraphics[height=37mm]{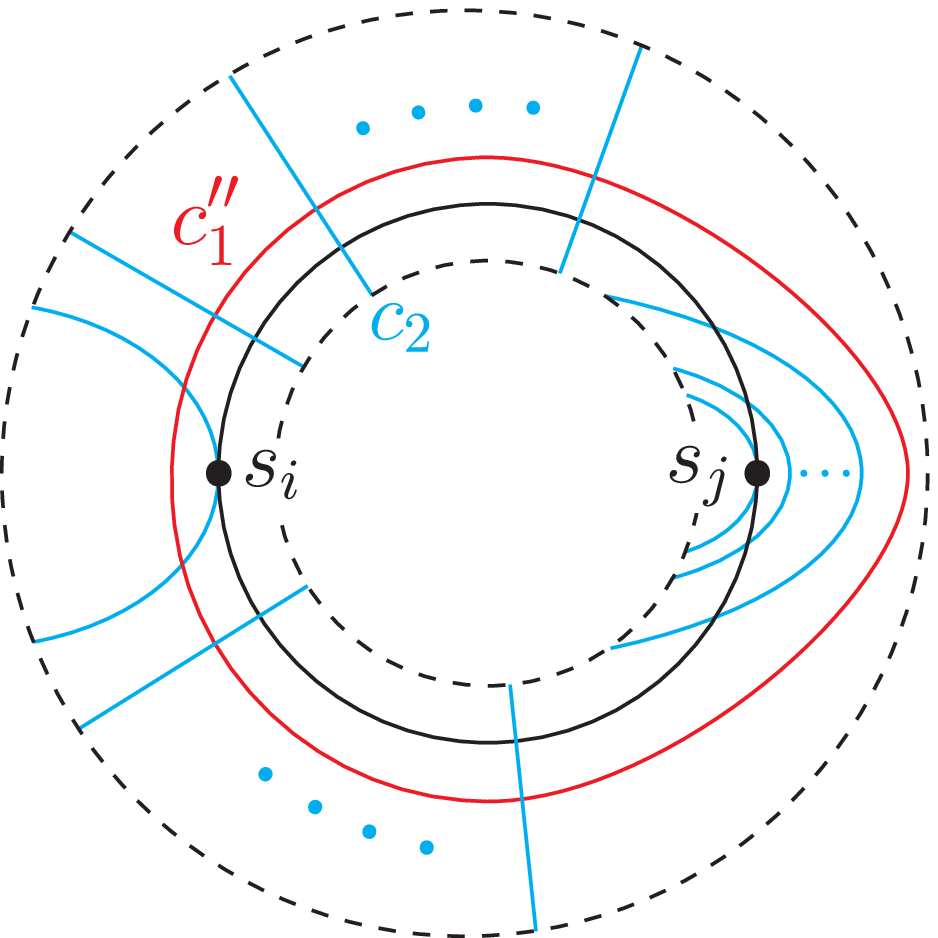}
	\label{F:bigon pair path additional 2}
}
\caption{The curves $c_1'$ and $c_1''$.}
\label{F:bigon pair paths case1}
\end{figure}
We can easily verify that no region made by $c_1'$ and $c_2$ in Figures~\ref{F:bigon pair path1}, \ref{F:bigon pair path2} and \ref{F:bigon pair path3} can be a bigon. (Recall that $c_1$ and $c_2$ are assumed to be in minimal position.)
In these cases, we put $c_3 = c_1'$, which satisfies the desired conditions (i.e. away from $c_1$ and $i(c_2,c_3) \neq 0$). 
In particular we can deduce that $\widetilde{t_{c_1}}$ and $\widetilde{t_{c_2}}$ are different mapping classes. 
As for the last case, the shaded region in Figure~\ref{F:bigon pair path additional 1} can be a bigon. 
If this region is a bigon, we move $c_1'$ to $c_1''$ so that it avoids all the bigons nested around $s_j$ (see Figure~\ref{F:bigon pair path additional 2}). 
It is easy to see that $c_1''\cap c_2$ is not empty and no region made by $c_1''$ and $c_2$ in Figure~\ref{F:bigon pair path additional 2} can be a bigon. 
Thus $c_3=c_1''$ satisfies the desired conditions. 

\vspace{.5em}

\noindent
{\bf Case 2}~:~Suppose that one of the components of $c_1\setminus \{s_i,s_j\}$ intersects $c_2$ but the other one does not. 
As in Case 1, we have to consider four cases according to the configuration of $c_2$ around $s_i$ and $s_j$. 
In each case we take parallel copies of $c_1$ as shown in Figures~\ref{F:bigon pair paths case 2}, \ref{F:bigon pair paths case 2_2} and \ref{F:bigon pair paths case 2_3}. 
\begin{figure}[htbp]
	\centering
	\subfigure[]{
		\includegraphics[height=37mm]{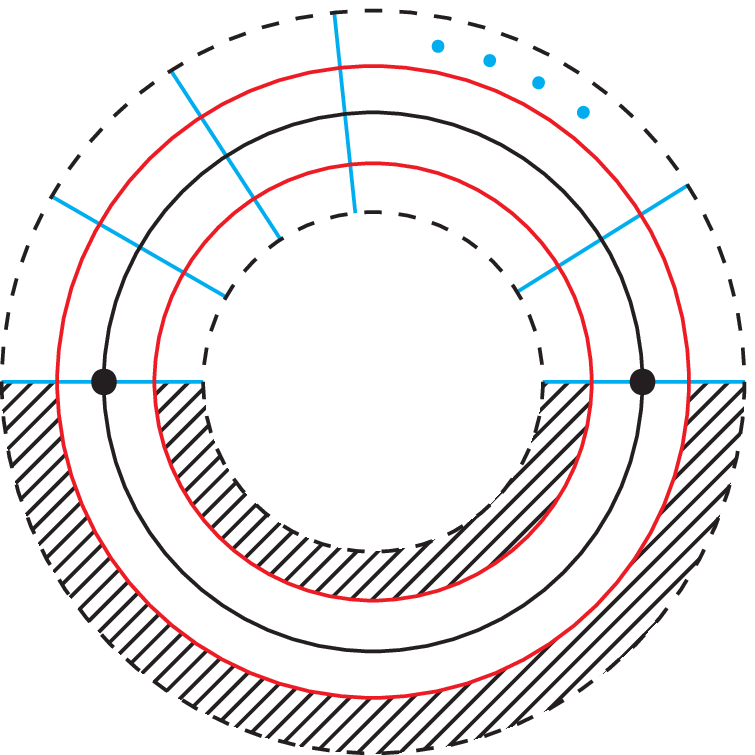}
		\label{F:bigon pair path8}
	}
	\hspace{.5em}
	\subfigure[]{
		\includegraphics[height=37mm]{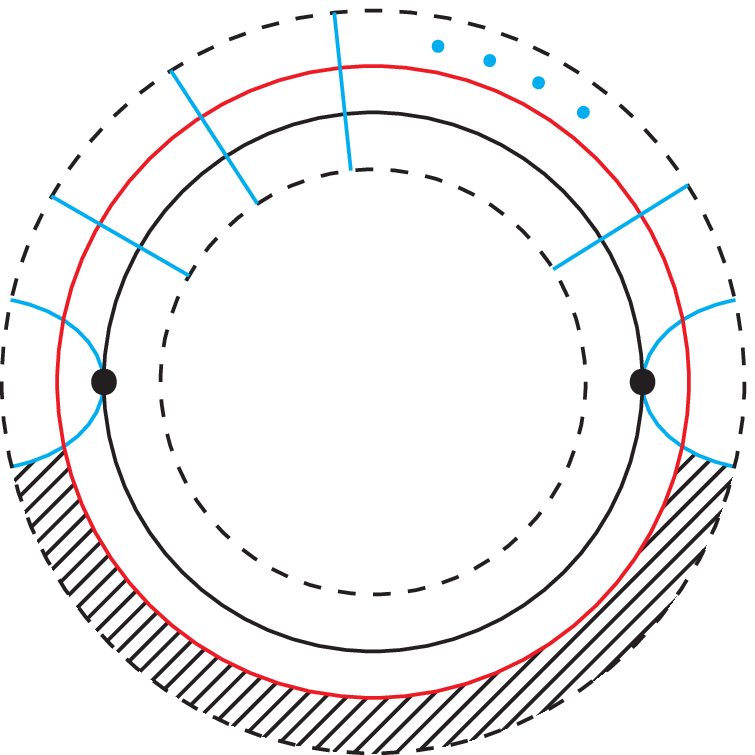}
		\label{F:bigon pair path10}
	}
	\caption{The curves parallel to $c_1$ and candidates of bigons.}
	\label{F:bigon pair paths case 2}
\end{figure}
It is easy to verify that either of the shaded regions in Figure~\ref{F:bigon pair path8} is not a bigon. 
Thus, either of the parallel curves of $c_1$ in the figure is in minimal position with $c_2$. 
If the shaded region in Figure~\ref{F:bigon pair path10} is not a bigon, then the parallel curve in the figure is in minimal position with $c_2$.
We can move $c_2$ by an isotopy so that it intersects $c_1$ transversely on both $s_i$ and $s_j$ if the shaded region in Figure~\ref{F:bigon pair path10} is a bigon. 
In this case, we can take a curve $c_3\subset \Sigma_g\setminus \{s_1,\ldots,s_n\}$ so that it is away from $c_1$ and $i(c_2,c_3) \neq 0$. 

None of the regions made by $c_2$ and the parallel copy of $c_1$ in Figure~\ref{F:bigon pair path9}, except for the shaded one, can be bigons.
\begin{figure}[htbp]
\centering
\subfigure[]{
\includegraphics[height=37mm]{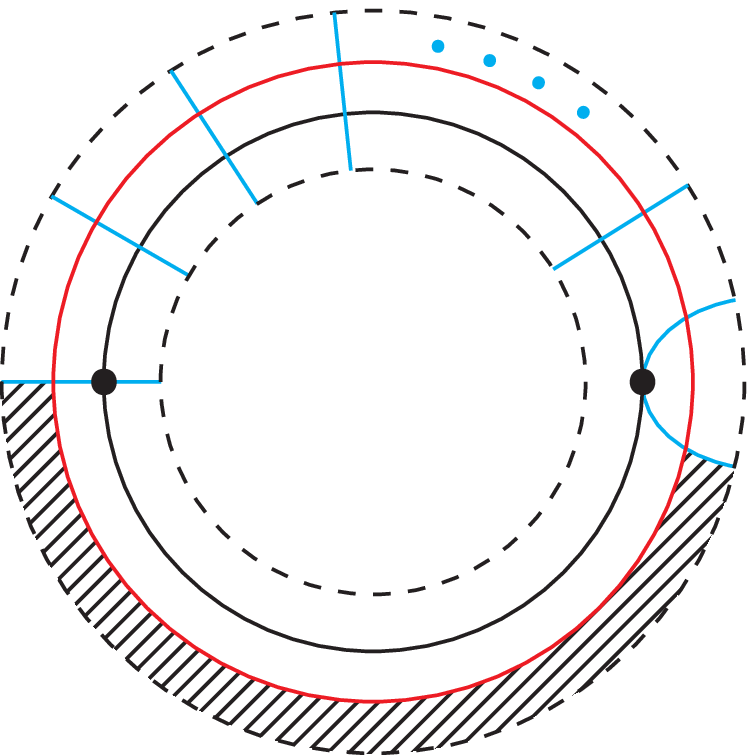}
\label{F:bigon pair path9}
}
\hspace{.5em}
\subfigure[]{
\includegraphics[height=37mm]{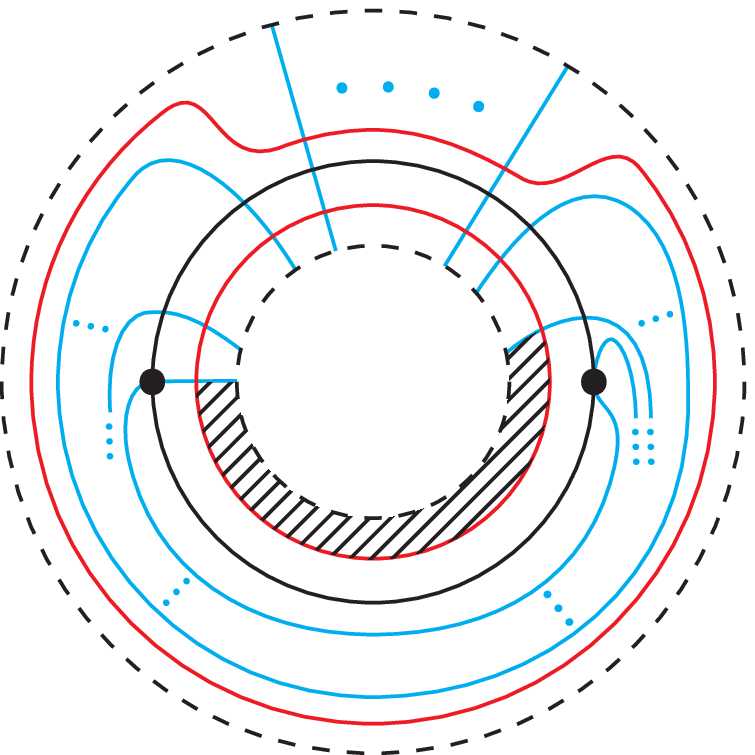}
\label{F:bigon pair path9_2}
}
\hspace{.5em}
\subfigure[]{
\includegraphics[height=37mm]{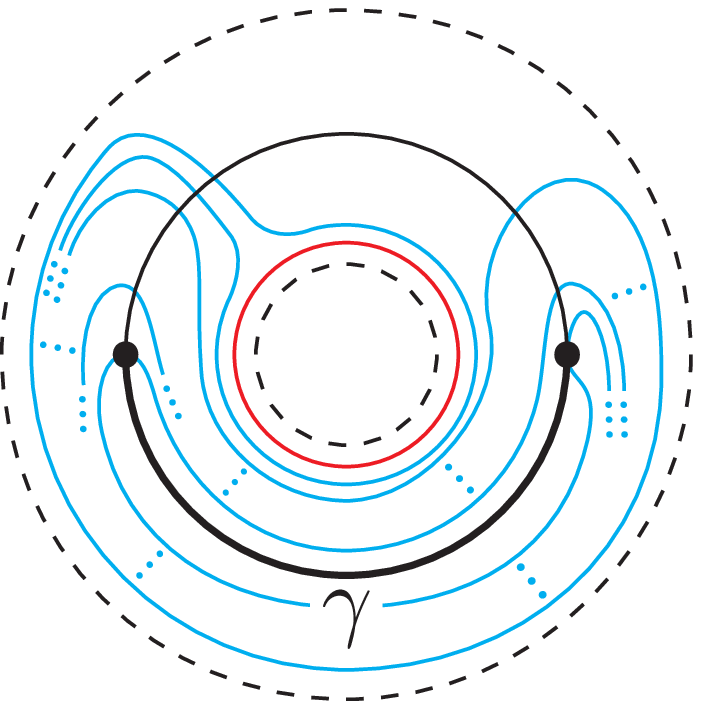}
\label{F:bigon pair path9_3}
}
	\caption{The curves parallel to $c_1$ and candidates of bigons.}
	\label{F:bigon pair paths case 2_2}
\end{figure}
If the shaded region in Figure~\ref{F:bigon pair path9} is not a bigon, then the parallel copy of $c_1$ in the figure is in minimal position with $c_2$. 
If the shaded region in Figure~\ref{F:bigon pair path9} is a bigon, we move the parallel copy of $c_1$ so that it avoids all the bigons nested around the shaded region (see Figure~\ref{F:bigon pair path9_2}). 
It is easily verified (using bigon criterion) that the resulting curve $d$ is in minimal position with $c_2$. 
If $d$ has non-empty intersection with $c_2$, we can deduce that $\widetilde{t_{c_1}}$ and $\widetilde{t_{c_2}}$ are different mapping classes. 
If $d$ is away from $c_2$, we take another parallel copy of $c_1$ ``inside'' $c_1$ as shown in Figure~\ref{F:bigon pair path9_2}. 
This copy is in minimal position provided that the shaded region in Figure~\ref{F:bigon pair path9_2} is not a bigon. 
If it is a bigon, we again move the copy so that it avoids all the bigons nested around the shaded region. 
It is easily verify that the resulting curve $d'$ is in minimal position with $c_2$. 
We can deduce $\widetilde{t_{c_1}} \neq \widetilde{t_{c_2}}$ provided that $d'$ intersects with $c_2$. 
If $d'$ is away from $c_2$, then $c_2$ is as shown in Figure~\ref{F:bigon pair path9_3}, in particular it is isotopic to $\tau_{\gamma}^N(c_1)$ for some $N >0$, where $\gamma$ is a path between $s_i$ and $s_j$ in Figure~\ref{F:bigon pair path9_3}. 
We can thus deduce from Lemma~\ref{T:non commutable classes} that $\widetilde{t_{c_1}}$ and $\widetilde{t_{c_2}} = \tau_{\gamma}^{-N}\widetilde{t_{c_1}}\tau_{\gamma}^N$ are different mapping classes.

None of the regions made by $c_2$ and the parallel copy of $c_1$ in Figure~\ref{F:bigon pair path11}, except for the shaded one, can be bigons.
\begin{figure}[htbp]
\centering
\subfigure[]{
\includegraphics[height=37mm]{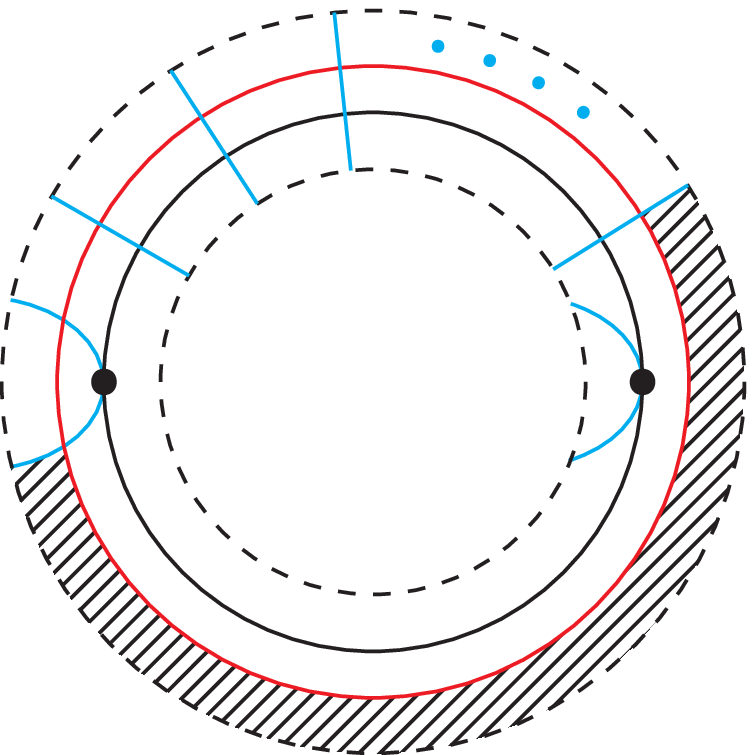}
\label{F:bigon pair path11}
}
\hspace{.5em}
\subfigure[]{
\includegraphics[height=37mm]{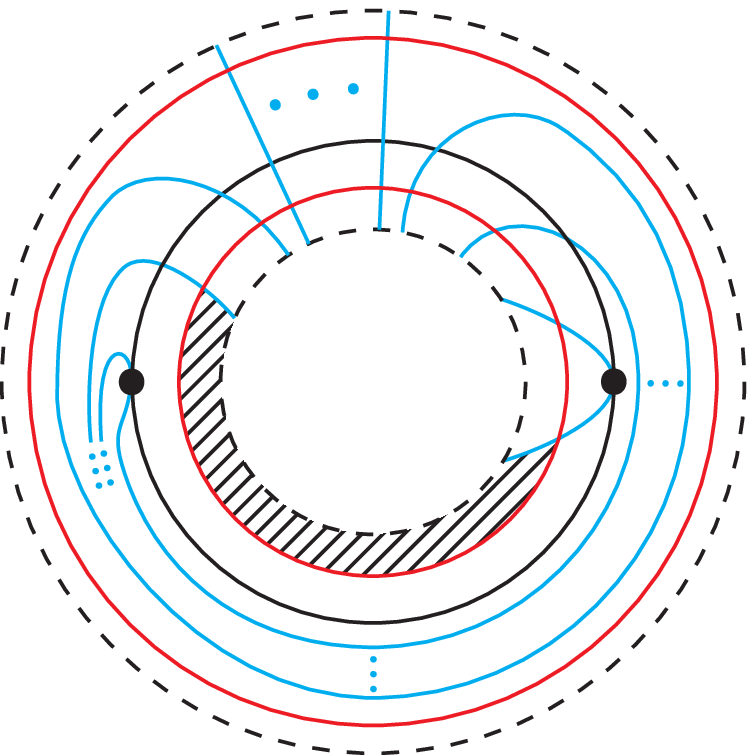}
\label{F:bigon pair path11_2}
}
\hspace{.5em}
\subfigure[]{
\includegraphics[height=37mm]{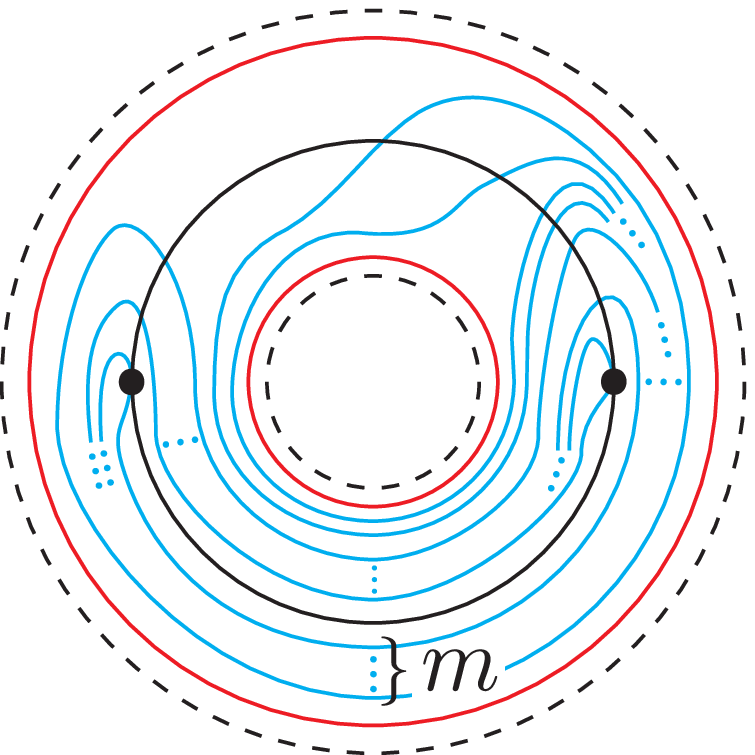}
\label{F:bigon pair path11_3}
}
	\caption{The curves parallel to $c_1$ and candidates of bigons.}
	\label{F:bigon pair paths case 2_3}
\end{figure}
Thus we can deduce $\widetilde{t_{c_1}} \neq \widetilde{t_{c_2}}$ provided that the shaded region in Figure~\ref{F:bigon pair path11} is not a bigon. 
If the shaded region is a bigon, we move the parallel copy by an isotopy so that it avoids all the bigons nested around the shaded one in Figure~\ref{F:bigon pair path11} (see Figure~\ref{F:bigon pair path11_2}). 
It is easily verified that the resulting curve $\tilde{d}$ is in minimal position with $c_2$, so we can deduce $\widetilde{t_{c_1}} \neq \widetilde{t_{c_2}}$ provided that $\tilde{d}$ intersects with $c_2$. 
If $\tilde{d}$ is away from $c_2$, we take another parallel copy of $c_1$ ``inside'' $c_1$ as shown in Figure~\ref{F:bigon pair path11_2}.
This copy is in minimal position provided that the shaded region in Figure~\ref{F:bigon pair path11_2} is not a bigon. 
If it is a bigon, we again move the copy so that it avoids all the bigons nested around the shaded region. 
It is easy to check that the resulting curve $\tilde{d}'$ is in minimal position with $c_2$. 
The curve $\tilde{d}'$ must intersect with $c_2$. 
For, if $\tilde{d}'$ were away from $c_2$, $\tilde{d}'$ is as shown in Figure~\ref{F:bigon pair path9_3}, in particular it would be disconnected if the number $m$ of strands of paths is odd, or null-homotopic (as a curve in $\Sigma_g$) if $m$ is even, but both of the consequences contradict our initial assumptions.
We conclude that $\widetilde{t_{c_1}}$ and $\widetilde{t_{c_2}}$ are different mapping classes. 

\vspace{.5em}

\noindent
{\bf Case 3}~:~Suppose that neither of the components of $c_1\setminus \{s_i,s_j\}$ intersect $c_2$. 
As before, we consider four cases according to the configuration of $c_2$ around $s_i$ and $s_j$. 
In each case we take parallel copies of $c_1$ as shown in Figure~\ref{F:bigon pair paths case 3}. 
\begin{figure}[htbp]
\centering
\subfigure[]{
\includegraphics[height=37mm]{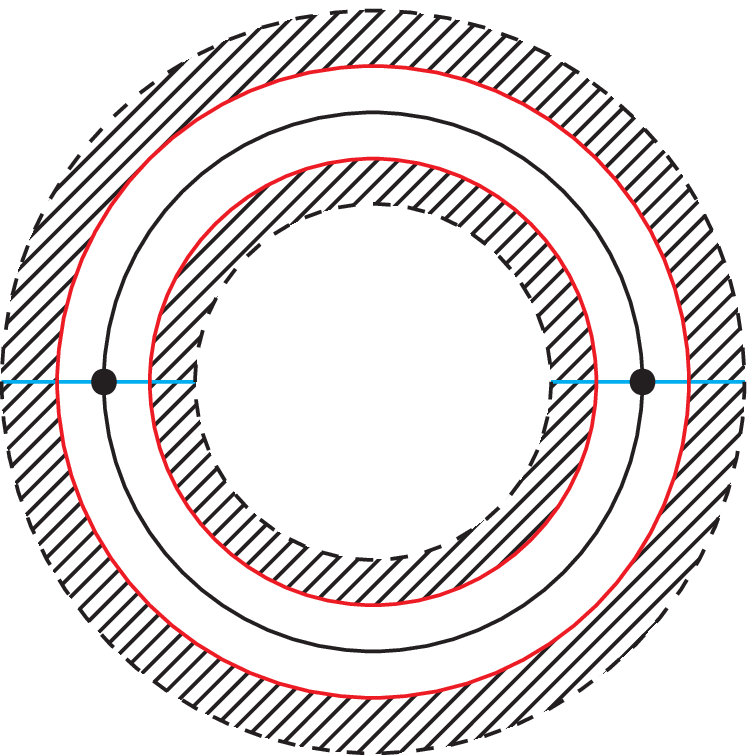}
\label{F:bigon pair path4}
}
\hspace{.5em}
\subfigure[]{
\includegraphics[height=37mm]{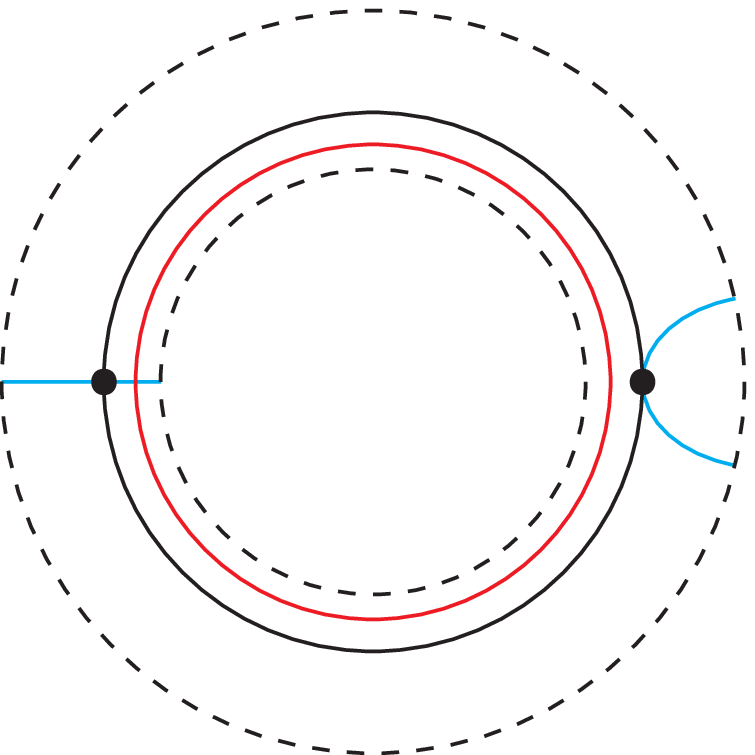}
\label{F:bigon pair path5}
}

\subfigure[]{
\includegraphics[height=37mm]{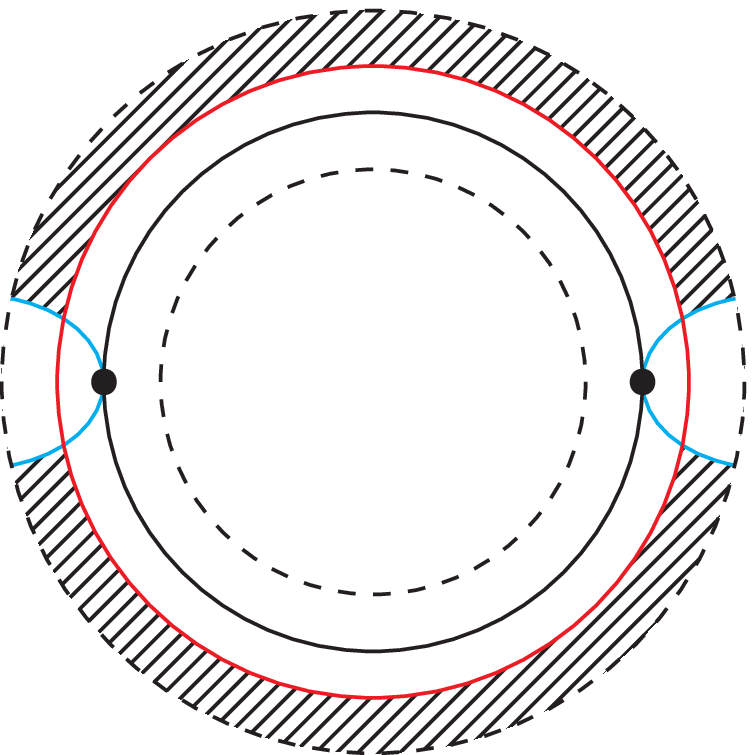}
\label{F:bigon pair path6}
}
\subfigure[]{
	\includegraphics[height=37mm]{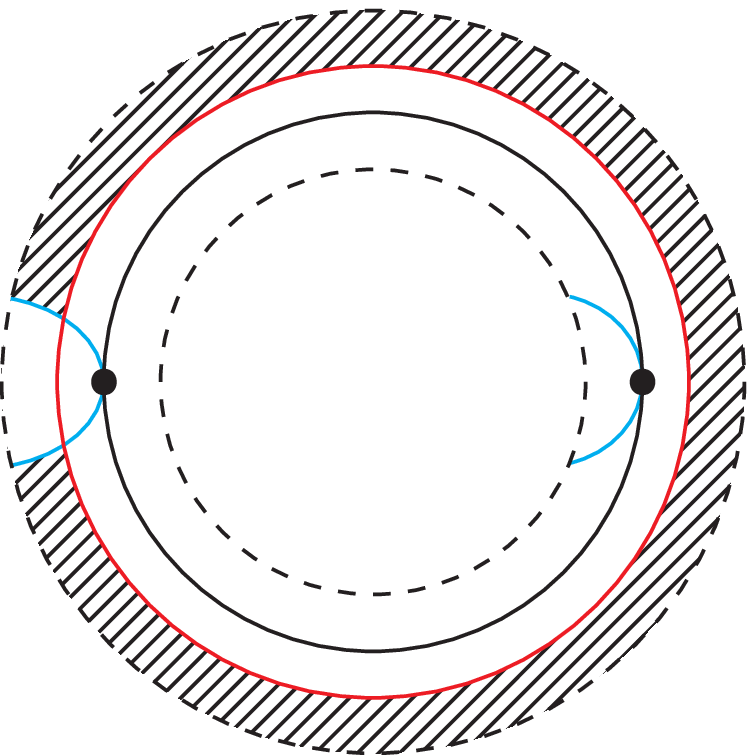}
	\label{F:bigon pair path7}
}
\caption{The curve $c_1'$ and candidates of bigons.}
\label{F:bigon pair paths case 3}
\end{figure}
If both of the parallel copies of $c_1$ in Figure~\ref{F:bigon pair path4} formed bigons with $c_2$, either $c_2$ would be homotopic to $c_1$ or $c_2$ is null-homotopic (as a simple closed curve in $\Sigma_g$), which contradicts the assumptions. 
Thus either one of the copies in Figure~\ref{F:bigon pair path4} is in minimal position with $c_2$, and we can deduce $\widetilde{t_{c_1}} \neq \widetilde{t_{c_2}}$. 
The parallel copy of $c_1$ in Figure~\ref{F:bigon pair path5} is in minimal position with $c_2$ since these intersect at a single point. 
If both of the shaded regions in Figure~\ref{F:bigon pair path6} are not bigons, $c_2$ and the copy in the figure are in minimal position.  
If either of the shaded regions in Figure~\ref{F:bigon pair path6} is a bigon, we can move $c_1$ by a isotopy so that $c_1$ and $c_2$ intersect transversely at $s_i$ and $s_j$. 
In both cases, we can deduce $\widetilde{t_{c_1}} \neq \widetilde{t_{c_2}}$. 
If the shaded region in Figure~\ref{F:bigon pair path7} were a bigon, $c_2$ would be homotopic to $c_1$, which contradicts the assumptions. 
Thus, $c_2$ and the copy in Figure~\ref{F:bigon pair path7} are in minimal position.  
\end{proof}

\begin{remark}
We should point out that one cannot state Lemmas~\ref{T:isotopy equality half twist} and \ref{T:isotopy equality lift Dehn twist} for the lifts of the elements featured in them to the framed mapping class group $\Mod(\Sigma_g^n;\{u_1,\ldots,u_n\})$ instead. For example, for the paths $\gamma$ and $\gamma' = t_{\delta_i}t_{\delta_j}(\gamma)$ between $s_i$ and $s_j$, which are not isotopic (relative to the boundary of $\Sigma_g^n$) in general, we can see that $\tau_{\gamma}$ is equal to $\tau_{\gamma'}$ in $\Mod(\Sigma_g^n;\{u_1,\ldots,u_n\})$. There are in fact infinitely many such lifts of arc twists in $\Mod(\Sigma_g;\{s_1,\ldots,s_n\})$, which is the underlying cause for this ambiguity.
\end{remark}

Next is a variation of a classical result of Earle and Schatz \cite{Earle_Schatz}:

\begin{lemma}\label{T:contractibility Diff}
If $2-2g-n$ is negative, then $\pi_1(\Diff(\Sigma_g;\{s_1,\ldots,s_n\}),\id)$ is trivial. 
\end{lemma}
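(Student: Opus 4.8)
The plan is to realize $\Diff(\Sigma_g;\{s_1,\ldots,s_n\})$ as the fiber of an evaluation fibration and run an induction on $n$, bottoming out in classical computations of $\pi_1$ of diffeomorphism groups of closed surfaces. First I would reduce to the subgroup fixing the marked points individually: any path $\varphi_t$ in $\Diff(\Sigma_g;\{s_1,\ldots,s_n\})$ starting at $\id$ keeps each $\varphi_t(s_i)$ inside the finite set $\{s_1,\ldots,s_n\}$ while varying continuously, so it must fix each $s_i$. Hence the identity component, and therefore $\pi_1$ based at $\id$, is unchanged if we replace the set-preserving group by the group $\Diff(\Sigma_g;s_1,\ldots,s_n)$ of diffeomorphisms fixing each marked point. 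Write $G_k$ for the identity component of the latter group with $k$ marked points, so that the target is $\pi_1(G_n)=0$.

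Next I would set up, for each $n$, the evaluation map $\mathrm{ev}_{s_n}\colon G_{n-1}\to \Sigma_g\setminus\{s_1,\ldots,s_{n-1}\}$, $\varphi\mapsto\varphi(s_n)$. By the isotopy extension theorem this is a locally trivial fibration, and $G_{n-1}$ acts transitively on the connected complement $\Sigma_g\setminus\{s_1,\ldots,s_{n-1}\}$, so $\mathrm{ev}_{s_n}$ is onto and its fiber over $s_n$ has identity component $G_n$. The base is an open surface (or the closed $\Sigma_g$ when $n=1$), which is aspherical whenever it is not a sphere; in particular $\pi_2$ of the base vanishes in every case we shall need. The homotopy long exact sequence then furnishes, from $\pi_2(\text{base})=0$, an injection $\pi_1(G_n)\hookrightarrow\pi_1(G_{n-1})$, so it suffices to kill $\pi_1$ at the bottom of the induction and to keep the base aspherical at every step.

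For the base cases I would invoke the classical computations. When $g\ge2$ I start at $n=0$: the Earle--Eells theorem gives that $G_0=\Diff_0(\Sigma_g)$ is contractible, so $\pi_1(G_0)=0$, and since $\Sigma_g$ minus any finite set is aspherical, the induction runs for all $n\ge0$. When $g=1$ I start at $n=1$, using $G_1\to\Diff_0(T^2)\xrightarrow{\mathrm{ev}}T^2$: because $\Diff_0(T^2)$ deformation retracts onto its translation torus, $\mathrm{ev}$ is a homotopy equivalence and $G_1$ is weakly contractible; the induction then proceeds for $n\ge1$, as the complement of points in the torus is aspherical. When $g=0$ the hypothesis $2-2g-n<0$ forces $n\ge3$, and I start at $n=3$, using $G_3\to\Diff^+(S^2)\xrightarrow{\mathrm{ev}}\mathrm{Conf}_3(S^2)$: Smale's theorem gives $\Diff^+(S^2)\simeq SO(3)$, the identification $\mathrm{Conf}_3(S^2)\cong PSL(2,\mathbb C)$ (M\"obius transformations act simply transitively on ordered triples) gives $\mathrm{Conf}_3(S^2)\simeq SO(3)$, and the orbit map renders $\mathrm{ev}$ a homotopy equivalence, so $G_3$ is weakly contractible. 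For $n\ge4$ the base $S^2$ minus $n-1\ge3$ points is aspherical, and the induction runs. In all three regimes the injections $\pi_1(G_n)\hookrightarrow\pi_1(G_{n-1})$ propagate $\pi_1=0$ upward, proving the lemma; this is exactly the ``variation'' of Earle--Schatz \cite{Earle_Schatz} alluded to in the statement.

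The main obstacle is the low-genus base cases, where $\Diff_0(\Sigma_g)$ fails to be contractible and one cannot merely quote a contractibility theorem. There I must show that the relevant evaluation map into a configuration space is a genuine homotopy equivalence, so that its fiber is weakly contractible; equivalently, one must verify that the nontrivial $\pi_1$ of the closed-surface diffeomorphism group---namely $\mathbb Z^2$ for the torus and $\mathbb Z/2$ for the sphere---injects onto its image in the pure configuration (sphere braid) group, the image being a diagonal translation class for $T^2$ and the order-two full-twist for $S^2$. A secondary point demanding care is to ensure the aspherical-base condition is never violated: the induction for $g=0$ must be initiated at $n=3$ rather than $n=0$, so that a sphere never occurs as the base of the single-point evaluation fibration.
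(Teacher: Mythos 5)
Your proposal is correct and follows essentially the same route as the paper: both arguments run the long exact sequence of the evaluation fibration, use asphericity of the relevant configuration spaces to get injectivity of $\pi_1$ of the point-stabilizer into $\pi_1$ of the ambient diffeomorphism group, and settle the base cases with Earle--Eells for $g\ge 2$ and explicit control of the torus and sphere. The only (cosmetic) divergences are that the paper adds all $n$ marked points at once via $F_{0,n}(\Sigma_g)$ for $g\ge 1$ and initializes the sphere case by tracking the rotation generator through $\pi_1(\Diff(S^2;\{s_1\}))\cong\Z$ one point at a time, whereas you peel off one point at a time throughout and seed the $g=0$ induction at $n=3$ via the identification $\mathrm{Conf}_3(S^2)\cong PSL(2,\mathbb{C})\simeq SO(3)$ together with Smale's theorem.
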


\begin{proof}
As in the proof of \cite[Theorem 4.6]{Farb_Margalit_2011}, we can obtain the following exact sequence (note that we omit the base points for simplicity): 
\begin{equation}\label{E:exact sequence configuration space}
\begin{split}
\pi_2(F_{0,n}(\Sigma_g))\to \pi_1(\Diff(\Sigma_g;\{s_1,\ldots,s_n\})) \to \pi_1(\Diff(\Sigma_g)) \\
\to \pi_1(F_{0,n}(\Sigma_g)), 
\end{split}
\end{equation}
where $F_{0,n}(\Sigma_g)$ is the configuration space defined in \cite{Fadell_Neuvirth}, which is aspherical if $g\geq 1$. 
Since the group $\pi_1(\Diff(\Sigma_g))$ is trivial for $g\geq 2$ (\cite{Earle_Eells}), so is $\pi_1(\Diff(\Sigma_g;\{s_1,\ldots,s_n\}))$ if $g\geq 2$.  
Furthermore, the following diagram commutes: 
\[
\xymatrix{
& \pi_1(T^2) \ar[dl]_{\cong} \\
\pi_1(\Diff(T^2)) \ar[r]^{\mathrm{ev}_\ast} & \pi_1(F_{0,n}(T^2)) \ar[u]_{\pi_\ast} , 
}
\]
where $\pi_\ast$ is induced by the natural projection and $\mathrm{ev}_\ast$ is induced by the evaluation map, which is the same map as that in \eqref{E:exact sequence configuration space}. 
In particular, $\mathrm{ev}_\ast$ is injective. 
We can thus deduce from the exact sequence \eqref{E:exact sequence configuration space} that $\pi_1(\Diff(T^2;\{s_1,\ldots,s_n\}))$ is trivial for $n>0$. 

We can also obtain the following exact sequence: 
\begin{equation}\label{E:exact sequence configuration sphere}
\begin{split}
\pi_2(F_{n-1,1}(S^2)) \to \pi_1(\Diff(S^2;\{s_1,\ldots,s_n\})) \to \pi_1(\Diff(S^2;\{s_1,\ldots,s_{n-1}\}))\\
\to \pi_1(F_{n-1,1}(S^2)). 
\end{split}
\end{equation}
The configuration space $F_{n-1,1}(S^2)$ is aspherical for $n\geq 2$ (\cite{Fadell_Neuvirth}). 
Thus, if the fundamental group $\pi_1(\Diff(\Sigma_g;\{s_1,\ldots,s_{n-1}\}))$ is trivial, so is $\pi_1(\Diff(\Sigma_g;\{s_1,\ldots,s_n\}))$. 
Using \eqref{E:exact sequence configuration sphere} we can verify that $\pi_1(\Diff(S^2;\{s_1\}))$ is an infinite cyclic group generated by the loop $\theta \to \phi_{2\pi \theta}\in \Diff(S^2;\{s_1\})$, where $\phi_{\theta}$ is the $\theta$--degree rotation of $S^2$ fixing $s_1$. 
Since $F_{1,1}(S^2)= \R^2$, especially $\pi_1(F_{1,1}(S^2))=1$, $\pi_1(\Diff(\Sigma_g;\{s_1,s_2\}))$ is also an infinite cyclic group generated by $\phi_{2\pi \theta}$. 
It is easy to see that $[\phi_{2\pi \theta}]\in \pi_1(\Diff(\Sigma_g;\{s_1,s_2\}))$ is sent to the generator of $\pi_1(F_{2,1}(S^2))\cong \Z$. 
We can eventually conclude that $\pi_1(\Diff(\Sigma_g;\{s_1,\ldots,s_n\}))$ is trivial for any $n\geq 3$. 
\end{proof}

\begin{remark}
The above lemma can be possibly derived as a corollary of the contractibility of the identity component of $\Diff(\Sigma_g^n)$, as shown in \cite{Earle_Schatz},
provided $\Diff(\Sigma_g^n)$ is seen to be homotopy equivalent to $\Diff(\Sigma_g;\{s_1,\ldots,s_n\})$. For our purposes however, it is sufficient to calculate the fundamental group of $\Diff(\Sigma_g;\{s_1,\ldots,s_n\})$. So we have given a direct proof of Lemma~\ref{T:contractibility Diff}.  
\end{remark}

We are now ready to prove the theorem.
\begin{proof}[Proof of Theorem~\ref{T:equivalence multisection factorization}]
We first assume that $(X_1,f_1,S_1)$ and $(X_2,f_2,S_2)$ are equivalent. 
We can take diffeomorphisms $\Phi:X_1\to X_2$ and $\phi:S^2\to S^2$ such that $\phi\circ f_1 = f_2\circ \Phi$ and $\Phi(S_1)=S_2$. 
Let $\alpha_1,\ldots, \alpha_{k+l}\subset S^2$ be reference paths for $f_1$ with the common initial point $p_0$ which give rise to the factorization $W_{X_1,f_1,S_1}$ under an identification $\Theta:(f_1^{-1}(p_0),f_1^{-1}(p_0)\cap S_1)\xrightarrow{\cong} (\Sigma_g,\{s_1,\ldots, s_n\})$.  
Let $p_0' = \phi(p_0)$, $\alpha_i' = \phi(\alpha_i)$ and $\Theta' = \Theta\circ \Phi^{-1}$, which is an identification of $(f_2^{-1}(p_0'),f_2^{-1}(p_0')\cap S_2)$ with $(\Sigma_g,\{s_1,\ldots, s_n\})$. 
It is easily verify that the monodromy factorization of $f_2$ obtained from $\alpha_1',\ldots,\alpha_{k+l}'$ and $\Theta'$ coincides with $W_{X_1,f_1,S_1}$. 
Since any two factorizations of $(X_2,f_2,S_2)$ are Hurwitz equivalent, so are $W_{X_1,f_1,S_1}$ and $W_{X_2,f_2,S_2}$. 

In what follows we assume that $W_{X_1,f_1,S_1}$ and $W_{X_2,f_2,S_2}$ are Hurwitz equivalent. 
We first consider the case that $f_i$ has no critical points and $f_i|_{S_i}$ has no branched points. 
In this case, $f_i$ can be obtained by pasting two trivial surface bundles over the disk so that the marked points corresponding $n$--sections match. 
By Lemma~\ref{T:contractibility Diff} such a pasting map is unique up to isotopy preserving fibration structures. 
Thus $f_1$ and $f_2$ are equivalent. 

Assume that $f_i$ has critical points or $f_i|_{S_i}$ has branched points. 
We can take reference paths $\alpha^i_1,\ldots,\alpha^i_{k+l}\subset S^2$ for $f_i$ so that the local monodromies associated with $\alpha^1_j$ and $\alpha^2_j$ coincide. 

By composing a self-diffeomorphism of $S^2$ to $f_2$, we may assume that $f_1(\Crit(f_1)\cup \Crit(f_1|_{S_1}))$ and $f_2(\Crit(f_2)\cup \Crit(f_2|_{S_2}))$ coincide. 
Let $f_1(\Crit(f_1)\cup \Crit(f_1|_{S_1}))=\{a_1,\ldots,a_{k+l}\}$ and $D_j\subset S^2$ a sufficiently small disk neighborhood of $a_j$.
Since all the local monodromies of $f_1$ and $f_2$ coincide, we can take a diffeomorphism $H:f_1^{-1}(S^2\setminus \amalg_jD_j)\to f_2^{-1}(S^2\setminus \amalg_jD_j)$ sending the intersection $f_1^{-1}(S^2\setminus \amalg_jD_j)\cap S_1$ to $f_2^{-1}(S^2\setminus \amalg_jD_j)\cap S_2$ such that the following diagram commutes: 
\[
\xymatrix{
f_1^{-1}(S^2\setminus \amalg_jD_j) \ar[r]^{H} \ar[d]^{f_1} & f_2^{-1}(S^2\setminus \amalg_jD_j) \ar[dl]^{f_2} \\
S^2\setminus \amalg_jD_j. 
}
\]
In what follows we will extend $H$ to a diffeomorphism with the source containing the preimages $f_1^{-1}(D_1), \ldots, f_1^{-1}(D_{k+l})$. 

If $f_i^{-1}(a_j)$ contains a Lefschetz critical point which is not a branched point of $f_i|_{S_i}$, then in the same manner as in the proof of \cite[Theorem 2.4]{Matsumoto3} we can extend $H$ to a diffeomorphism with the source containing $f_1^{-1}(D_j)$ (in this procedure we need Lemma~\ref{T:contractibility Diff} instead of the contractibility of $\Diff(\Sigma_g)$ used in \cite[p.133]{Matsumoto3}). 

Assume that $f_i^{-1}(a_j)$ contains a Lefschetz critical point $x_{ij}\in X_i$ which is also a branched point of $f_i|_{S_i}$. 
There exist complex coordinate neighborhoods $(U^i,\varphi^i)$ and $(V^i,\psi^i)$ at $x_{ij} \in X_i$ and $f_i(x_{ij})\in S^2$, respectively, which satisfy the following properties: 
\begin{enumerate}

\item
$\psi^i\circ f_i\circ (\varphi^i)^{-1}(z,w) = z^2 + w^2$, 

\item
$\varphi^i(U^i\cap S_i) = \C \times \{0\}$. 

\end{enumerate}
Using the disk theorem as in the proof of \cite[Theorem 2.4]{Matsumoto3}, we may assume that $\psi^1$ and $\psi^2$ coincide without loss of generality. 
By Lemma~\ref{T:isotopy equality lift Dehn twist} the vanishing cycles associated with $\alpha^1_j$ and $\alpha^2_j$ coincide up to isotopy relative to the points $s_1,\ldots,s_n$. 
Thus, in the same way as that in the proof of \cite[Lemma 2.5]{Matsumoto3}, we can change $H$ by a vertical isotopy (in the sense of \cite{Matsumoto3}) sending $S_1$ to $S_2$ at all times so that $\varphi^2\circ H = \varphi^1$ on a neighborhood (in $f_1^{-1}(\Pa D_j)$) of the vanishing cycle of $f_1$ associated with $\alpha^1_j$. 
The arguments following the proof of \cite[Lemma 2.5]{Matsumoto3} can be applied to our situation, and we can eventually extend $H$ to a diffeomorphism with the source containing $f_1^{-1}(D_j)$.  

Lastly, if $f_i^{-1}(a_j)$ contains a branched point of $f_i|_{S_i}$ away from $\Crit(f_i)$, then we can extend $H$ to a diffeomorphism with the source containing $f_1^{-1}(D_j)$ in a manner quite similar to that in the previous paragraph, where we invoke Lemma~\ref{T:isotopy equality half twist} instead of Lemma~\ref{T:isotopy equality lift Dehn twist} this time. 
\end{proof}

\smallskip
\subsection{Monodromy factorizations in the framed mapping class group} \

As discussed in the previous section (cf. Remark~\ref{R:lift factorization}) we can take a lift of the factorization $W_{X,f,S}$ to that of a product of Dehn twists along boundary components in the framed mapping class group $\Mod(\Sigma_g^n;\{u_1,\ldots,u_n\})$. Such a lift is needed to fully capture the local topology of the multisection $S$. 

Two such lifts $\widetilde{W}_{X_i,f_i,S_i}$ of $W_{X_i,f_i,S_i}$, for $i=1,2$,  are not necessarily related by elementary transformations and simultaneous conjugations even if $(X_1,f_1,S_1)$ and $(X_2,f_2,S_2)$ are equivalent. There is indeed no canonical way to choose lifts of $\tau_{\gamma}$ and $\widetilde{t_c}$. For instance, each one of the paths $\gamma_i$, $i=1,2$ in Figure~\ref{F:lifts path} is a lift of $\gamma$, and in turn, $\tau_{\gamma_i}\in \Mod(\Sigma_g^n;\{u_1,\ldots,u_n\})$ is a lift of $\tau_{\gamma}\in \Mod(\Sigma_g;\{s_1,\ldots,s_n\})$.

\begin{figure}[htbp]
\centering
\includegraphics[width=70mm]{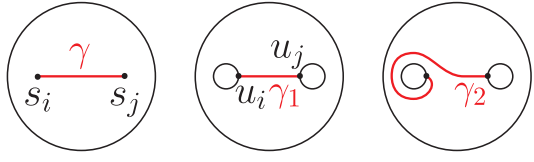}
\caption{Examples of paths between $s_i$ and $s_j$.}
\label{F:lifts path}
\end{figure}
\noindent It is easy to see that $\tau_{\gamma_2}$ is equal to $t_{\delta_i}\tau_{\gamma_1}t_{\delta_i}^{-1}=\tau_{\gamma_1}t_{\delta_j}t_{\delta_i}^{-1}$, where $\delta_k$ is a simple closed curve along the boundary component containing $u_k$. 
Thus, different choices of lifts of $\tau_{\gamma}$ and $\widetilde{t_c}$ in $W_{X_i,f_i,S_i}$ might yield a factorization $\widetilde{W}_{X_i,f_i,S_i}$ with distinct products, in particular the Hurwitz equivalence class of $\widetilde{W}_{X_i,f_i,S_i}$ depends on the choices of lifts of members in $W_{X_i,f_i,S_i}$.

For lifts of $\tau_{\gamma}$ and $\widetilde{t_c}$ are uniquely determined up to conjugations by Dehn twists along boundary components however, $\widetilde{W}_{X_i,f_i,S_i}$ is uniquely determined up to elementary transformations, simultaneous conjugations, plus a third modification:

\begin{enumerate}

\setcounter{enumi}{2}

\item
\emph{\Thirdmove}, which changes a factorization as follows: 
\[
\xi_{k+l}\cdots \xi_{i+1}\xi_i\xi_{i-1}\cdots \xi_1 \longleftrightarrow \xi_{k+l}\cdots \xi_{i+1}(t_{\delta}\xi_it_{\delta}^{-1})\xi_{i-1}\cdots \xi_1, 
\]
where $\delta$ is a simple closed curve along a boundary component of $\Sigma_g^n$. 

\end{enumerate}

\noindent 
A {\thirdmove} does not affect usual Dehn twists among the $\xi_i$ factors since any simple closed curve can be isotoped away from the boundary (but does affect lifts of Dehn twists). 
In particular, this move is not needed  to relate monodromy factorizations in $\Mod(\Sigma_g^n;\{u_1,\ldots,u_n\})$ associated to \emph{pure multisections}, i.e. disjoint union of sections. 

We say that two positive factorizations of products of Dehn twists along boundary components in $\Mod(\Sigma_g^n;\{u_1,\ldots,u_n\})$ are \emph{Hurwitz equivalent}, if one can be obtained from the other by a sequence of elementary transformations, simultaneous conjugations and \thirdmoves.  

For $i=1,2$, let $(X_i,f_i,S_i)$ be a genus--$g$ Lefschetz fibration with an $n$--section $S_i$, with monodromy factorization $W_{X_i, f_i, S_i}$ in $\Mod(\Sigma_g;\{s_1,\ldots,s_n\})$. Let $\widetilde{W}_{X_i,f_i,S_i}$ be a lift of $W_{X_i, f_i, S_i}$, a positive factorization of the form
\[
\widetilde{W}_{X_i,f_i,S_i}:\widetilde{\xi}_{k+l}\cdots \widetilde{\xi}_{1} = t_{\delta_1}^{a_1}\cdots t_{\delta_n}^{a_n}
\]
in $\Mod(\Sigma_g^n;\{u_1 \ldots,u_n\})$. Then
Theorem~\ref{T:equivalence multisection factorization}, together with the above observation, thus yields to a one-to-one correspondence in this setting as well:

\begin{corollary}\label{T:equivalence multisection factorization framed}
Suppose that $2-2g-n$ is negative. The triples $(X_1,f_1,S_1)$ and $(X_2,f_2,S_2)$ are equivalent if and only if $\widetilde{W}_{X_1,f_1,S_1}$ and $\widetilde{W}_{X_2,f_2,S_2}$ are Hurwitz equivalent. 
\end{corollary}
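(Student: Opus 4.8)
The plan is to derive the corollary formally from Theorem~\ref{T:equivalence multisection factorization} together with the well-definedness observation recorded just above the corollary, using the boundary capping homomorphism $\pi\colon\Mod(\Sigma_g^n;\{u_1,\ldots,u_n\})\twoheadrightarrow\Mod(\Sigma_g;\{s_1,\ldots,s_n\})$ to pass between the framed and unframed settings. I would treat the two implications separately: the ``only if'' direction (equivalence of triples implies framed Hurwitz equivalence) is a verbatim framed analogue of the opening paragraph of the proof of Theorem~\ref{T:equivalence multisection factorization}, while the ``if'' direction is the one that genuinely exploits $\pi$.

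For the ``only if'' direction, I would start from an isomorphism $(\Phi,\phi)$ of the triples and transport through $\Phi$ not only the reference paths $\alpha_j$ and the identification $\Theta$ of $(f_1^{-1}(p_0),f_1^{-1}(p_0)\cap S_1)$ with $(\Sigma_g,\{s_1,\ldots,s_n\})$, but also the chosen lifts of the standard factors to $\Mod(\Sigma_g^n)$. Since $\Phi$ carries a framed tubular neighborhood of $S_1$ to one of $S_2$, the transported data constitutes a legitimate choice of framed reference data for $(X_2,f_2,S_2)$ and produces a factorization equal, as a word, to $\widetilde W_{X_1,f_1,S_1}$, with matching boundary-twist product since the self-intersection numbers are preserved. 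It is therefore \emph{another} framed factorization of the single triple $(X_2,f_2,S_2)$, so by the observation preceding the corollary --- that $\widetilde W_{X_i,f_i,S_i}$ is well defined up to elementary transformations, simultaneous conjugations, and {\thirdmoves} --- it is Hurwitz equivalent to $\widetilde W_{X_2,f_2,S_2}$.

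For the ``if'' direction, I would apply $\pi$ to a chain of the three framed moves relating $\widetilde W_{X_1,f_1,S_1}$ to $\widetilde W_{X_2,f_2,S_2}$. The point is that capping the $k$-th boundary component turns $\delta_k$ into a curve bounding a disk containing the single marked point $s_k$, whence $\pi(t_{\delta_k})=\id$ in $\Mod(\Sigma_g;\{s_1,\ldots,s_n\})$. Consequently $\pi$ sends elementary transformations and simultaneous conjugations to moves of the same type and sends every {\thirdmove} to the trivial move. As $\pi(\widetilde W_{X_i,f_i,S_i})=W_{X_i,f_i,S_i}$, the two unframed factorizations $W_{X_1,f_1,S_1}$ and $W_{X_2,f_2,S_2}$ are then Hurwitz equivalent, and Theorem~\ref{T:equivalence multisection factorization} yields that the triples are equivalent.

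Since Theorem~\ref{T:equivalence multisection factorization} and the lift-uniqueness observation already carry the substantive content, the remaining obstacle is purely one of bookkeeping, of the right kind in each direction: one must know that the {\thirdmove} is \emph{enough} to absorb all the ambiguity in lifting $\tau_\gamma$ and $\widetilde t_c$ (used in ``only if'', and furnished by the fact that any two lifts differ by conjugations by boundary Dehn twists) and simultaneously \emph{harmless} downstairs, i.e.\ collapses under $\pi$ (used in ``if'', and furnished by $\pi(t_{\delta_k})=\id$). The one concrete technical check I would be most careful about is that the transport of lift data through $\Phi$ in the ``only if'' direction respects the framing of the normal bundle of the section, so that the full boundary-twist products on the two sides agree and not merely the interior Dehn twist factors.
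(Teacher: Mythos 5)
Your proposal is correct and follows essentially the same route as the paper, which derives the corollary from Theorem~\ref{T:equivalence multisection factorization} combined with the observation that $\widetilde{W}_{X_i,f_i,S_i}$ is well defined up to elementary transformations, simultaneous conjugations, and {\thirdmoves} (the paper leaves the two implications implicit, while you spell them out via transport of the framed reference data and via the boundary-capping homomorphism killing the $t_{\delta_k}$). Your closing technical caveat about the framing being respected by $\Phi$ is handled exactly as you suggest: an isomorphism of triples carries a framed tubular neighborhood of $S_1$ to one of $S_2$, so the boundary-twist products match.
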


\smallskip

\begin{remark}
It is worth noting that a {\thirdmove} can also affect the right hand side of a positive factorization
\[ \xi_{k+l}\cdots \xi_1=t_{\delta_1}^{a_1}\cdots t_{\delta_n}^{a_n} \]
in the framed mapping class group, simultaneously increasing and decreasing the powers of the involved boundary twists. In order to make the effect of a {\thirdmove} clear, let us define the following surjective homomorphism
\[
\Lambda:\Mod(\Sigma_g^n;\{u_1,\ldots,u_n\}) \twoheadrightarrow \mathfrak{S}_n
\]
defined by the action of a mapping class on the set $\{u_1,\ldots,u_n\}$, where $\mathfrak{S}_n$ is the symmetric group of order $n$. A Dehn twist $t_c$ is contained in the kernel of $\Lambda$, while a half twist $\tau_{\gamma}$ and a lift $\widetilde{t_c}$ of a Dehn twist are sent to transpositions by $\Lambda$. Clearly  $\xi \, t_{\delta_i}$ is equal to $t_{\delta_{\Lambda(\xi)(i)}}\xi$ for any $\xi\in \Mod(\Sigma_g^n;\{u_1,\ldots,u_n\})$, so a {\thirdmove} changes the right hand side of the factorization as follows:
\[
\xi_{k+l}\cdots \xi_{i+1}(t_{\delta_j}\xi_i t_{\delta_j}^{-1})\xi_{i-1}\cdots  \xi_1 = t_{\delta_{\Lambda(\xi_{k+l}\cdots\xi_{i+1})(j)}}t_{\delta_{\Lambda(\xi_{k+l}\cdots\xi_{i})(j)}}^{-1} t_{\delta_1}^{a_1}\cdots t_{\delta_n}^{a_n}. 
\] 
\end{remark}

\smallskip
\subsection{Equivalence of Lefschetz pencils} \

Given a genus--$g$ Lefschetz pencil $(X,f)$ with base locus $B=\{x_1, \ldots, x_n \}$, recall that we can pass to a genus--$g$ Lefschetz \emph{fibration} $(X',f')$, with a distinguished \emph{pure} $n$--section $S$ that consists of $n$ disjoint sections $S_j$ of self-intersection $-1$, each arising as an exceptional sphere in $X'=X \# n  \CPb$ of the blow-up at the base point $x_j \in X$. Since one can blow-down all $S_j$ to recover the pencil $(X,f)$, we can work with the well-known monodromy factorization $\widetilde{W}_{X',f',S}$ of $(X',f',S)$ of the form
\[
t_{c_{l}}\cdots t_{c_{1}} = t_{\delta_1} \cdots t_{\delta_n}
\]
in the framed mapping class group $\Mod(\Sigma_g^n;\{u_1,\ldots,u_n\})$. Note that this factorization is in fact contained in the subgroup $\Mod_{\partial \Sigma_g^n} (\Sigma_g^n)$ (whose elements restrict to identity along $\partial \Sigma_g^n$)  which only captures \textit{pure} $n$--sections, but we need the larger group in order to factor in pencil automorphisms which swap base points, which we will discuss shortly. This associated factorization is what we will call  monodromy factorization of the \emph{pencil} $(X,f)$, and with the above correspondence in mind, we will denote it simply by $W_{X,f}$.

Lefschetz pencils $f_i:X_i\setminus B_i \to S^2$, $i=1,2$, are said to be \emph{equivalent} if there exist orientation-preserving diffeomorphism $\Phi:X_1\to X_2$ and $\phi:S^2\to S^2$ such that $\Phi(B_1) = B_2$ and $\phi\circ f_1 = f_2 \circ \Phi$.  Clearly, for $(X_i, f_i)$ to be equivalent pencils, they should both have the same fiber genus and the same number of base points $|B_1|=|B_2|$. It now follows from Theorem~\ref{T:equivalence multisection factorization} that:

\begin{corollary}\label{T:equivalence pencils}
Two Lefschetz pencils $(X_i, f_i)$ of genus $g \geq 1$ with $n$ base points are equivalent if and only they have Hurwitz equivalent monodromy factorizations $W_{X_i,f_i}$ in $\Mod(\Sigma_g^n;\{u_1,\ldots,u_n\})$.
\end{corollary}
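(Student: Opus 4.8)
The plan is to reduce the statement to the already-established framed correspondence of Corollary~\ref{T:equivalence multisection factorization framed} by passing back and forth between a pencil and its blow-up. Recall that to a genus--$g$ pencil $(X,f)$ with base locus $B = \{x_1,\ldots,x_n\}$ we have associated the triple $(X',f',S)$, where $X' = X \# n\,\CPb$ and $S = S_1 \cup \cdots \cup S_n$ is the pure $n$--section consisting of the exceptional $(-1)$--spheres; by definition $W_{X,f}$ is precisely the framed factorization $\widetilde{W}_{X',f',S}$ of the form $t_{c_l}\cdots t_{c_1} = t_{\delta_1}\cdots t_{\delta_n}$ in $\Mod(\Sigma_g^n;\{u_1,\ldots,u_n\})$. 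Since $2 - 2g - n < 0$ holds whenever $g \geq 1$ and $n \geq 1$, Corollary~\ref{T:equivalence multisection factorization framed} applies to the triples $(X_i',f_i',S_i)$. It therefore suffices to show that the pencils $(X_i,f_i)$ are equivalent if and only if the triples $(X_i',f_i',S_i)$ are.

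For the ``only if'' direction I would start from a pencil equivalence $(\Phi,\phi)$ with $\Phi(B_1) = B_2$ and $\phi \circ f_1 = f_2 \circ \Phi$. Near each base point $f_i$ carries the standard Hopf model $[z:w]$, and since $\Phi$ intertwines $f_1$ and $f_2$ and is orientation-preserving, it takes fibers to fibers in this local picture and hence respects the blow-up data up to isotopy; thus $\Phi$ lifts canonically through the blow-ups to an orientation-preserving diffeomorphism $\Phi':X_1' \to X_2'$ carrying each exceptional sphere to an exceptional sphere. In particular $\Phi'(S_1) = S_2$ as embedded surfaces (the individual sections being permuted as $\Phi$ permutes $B_1$), and $f_2' \circ \Phi' = \phi \circ f_1'$. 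This exhibits $(X_1',f_1',S_1)$ and $(X_2',f_2',S_2)$ as equivalent triples, and Corollary~\ref{T:equivalence multisection factorization framed} then yields the Hurwitz equivalence of $W_{X_1,f_1}$ and $W_{X_2,f_2}$.

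For the ``if'' direction I would run the same correspondence backwards. Hurwitz equivalence of the framed factorizations gives, by Corollary~\ref{T:equivalence multisection factorization framed}, a triple equivalence $\Phi':X_1' \to X_2'$, $\phi$, with $\Phi'(S_1) = S_2$. Because each $S_i$ is a disjoint union of embedded $(-1)$--spheres and $\Phi'$ is a diffeomorphism, it must carry each connected component $S_{1,j}$ to some component $S_{2,\sigma(j)}$, again a $(-1)$--sphere. Appealing to the uniqueness of tubular neighborhoods of $(-1)$--spheres, $\Phi'$ descends across the blow-downs to an orientation-preserving diffeomorphism $\Phi:X_1 \to X_2$ with $\Phi(B_1) = B_2$ and $\phi \circ f_1 = f_2 \circ \Phi$, i.e. an equivalence of the pencils.

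The one point that really needs care --- and that justifies formulating the statement inside the full framed group $\Mod(\Sigma_g^n;\{u_1,\ldots,u_n\})$ rather than the boundary-fixing subgroup $\Mod_{\partial \Sigma_g^n}(\Sigma_g^n)$ in which the factorization $W_{X,f}$ literally lives --- is the bookkeeping of base-point permutations. A pencil equivalence is allowed to permute $B$, which after blow-up permutes the exceptional spheres and hence the marked points $u_j$; this is accommodated precisely by permitting global conjugations by mapping classes that permute the $u_j$, that is, by working up to Hurwitz equivalence in $\Mod(\Sigma_g^n;\{u_1,\ldots,u_n\})$. I would also observe that since $S$ is a \emph{pure} $n$--section, no half-twists or lifted Dehn twists through marked points occur among the factors, so the framing conjugation (move~(3)) plays no role here, consistent with the remark following its definition. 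The remaining ingredients --- the lift and descent of $\Phi$ through blow-up and blow-down, and the claim that $\Phi$ respects the local Hopf model --- are standard consequences of the naturality of blow-ups and the uniqueness of tubular neighborhoods, and I would dispatch them briefly.
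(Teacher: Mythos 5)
Your proposal is correct and follows essentially the same route as the paper: verify $2-2g-n<0$, pass to the blown-up fibration with its pure $n$--section of exceptional spheres, invoke the framed correspondence (Corollary~\ref{T:equivalence multisection factorization framed}/Theorem~\ref{T:equivalence multisection factorization}), and transport the diffeomorphism through the blow-up in one direction and the blow-down in the other. The paper's proof is terser on the lifting/descending of $\Phi$ and on the base-point permutation bookkeeping, but the substance is identical.
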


\begin{proof}
Since $ g\geq 1$ and $n >0$, we have $2-2g- n <0$. If $f_1$ and $f_2$ are equivalent, then the corresponding pairs of Lefschetz fibrations and these sections are also equivalent. Thus, $W_{X_1,f_1}$ and $W_{X_2,f_2}$ are Hurwitz equivalent by Corollary~\ref{T:equivalence multisection factorization framed}.  Suppose that $W_{X_1,f_1}$ and $W_{X_2,f_2}$ are Hurwitz equivalent.  Let $f'_i:X'_i=X_i\# n \CPb\to S^2$ be the associated Lefschetz fibration. We deduce from Theorem~\ref{T:equivalence multisection factorization}, and the assumption that there exist diffeomorphisms $\Phi':X'_1 \to X'_2$ and $\phi:S^2\to S^2$ such that $\Phi'$ sends the union of exceptional spheres of $X'_1$ to that of $X'_2$ and $\phi\circ {f'_1}={f'_2}\circ {\Phi'}$. So $\Phi'$ induces a diffeomorphism $\Phi:X_1\to X_2$ which satisfies $\Phi(B_1)=B_2$ and $\phi\circ f_1 = f_2\circ \Phi$, providing an equivalence between $(X_1,f_1)$ and $(X_2,f_2)$.
\end{proof}

\smallskip
\begin{remark}
The Hurwitz equivalence in the statement of Corollary~\ref{T:equivalence pencils} is not the classical one for fibrations, it is our (extended) Hurwitz equivalence for monodromy factorizations in the framed mapping class group, allowing the exceptional sections\,/\,base points to be interchanged. Although we believe the above corollary to be known to experts, we are not aware of any proof of it in the literature.
\end{remark}

\vspace{0.2in}
\section{Lefschetz fibrations which do not arise from pencils}\label{Stipsicz}

Although every Lefschetz pencil gives rise to a Lefschetz fibration on a blow-up of its total space, the converse is known to be false. As shown by Stipsicz \cite{Stipsicz}, and independently by Smith \cite{Smith}, if $(X,f)$ is a fiber sum of two nontrivial Lefschetz fibrations,\footnote{That is, the monodromy factorization $W_{X,f}$, up to Hurwitz equivalence, can be expressed as a product of two nontrivial positive factorizations of $1$.} it cannot have any exceptional sections, i.e. sections of self-intersection $-1$, and thus it is not a blow-up of a pencil. Motivated by this, Stipsicz conjectured in the same article that every Lefschetz fibration $(X,f)$, if not a blow-up of a pencil, is a fiber sum of such, which amounts to having Lefschetz pencils as building blocks of any Lefschetz fibration via fiber sums. 
 
In \cite{Sato_2008}, Sato proved that an interesting genus--$2$ Lefschetz fibration constructed by Auroux in \cite{Auroux_2003}, which could not be a fiber sum of nontrivial Lefschetz fibrations, did not have any exceptional sections either. This remained as the only known counter-example until recently, where in \cite{BaykurHayano_multisection}, we obtained several other genus--$2$ and $3$ counter-examples. The purpose of this section is to demonstrate the recipe of \cite{BaykurHayano_multisection} to generate such examples. We will do this while producing quick examples from a well-known relation in the mapping class group. We will then show that Auorux's example can also be derived in this very scheme. For various background results that goes into this recipe, we advise the reader to turn to \cite{BaykurHayano_multisection}.

\smallskip
\subsection{Examples derived by monodromy substitutions} \

Let $c_1, \ldots, c_5$ be the simple closed curves on $\Sigma_2^2$ as shown in Figure~\ref{scc_genus2surface}, and $\delta_1, \delta_2$ denote the two boundary components with marked points $u_1, u_2$. The chain relation of length $5$
\[ (t_{c_1}t_{c_2}t_{c_3}t_{c_4}t_{c_5})^6=t_{\delta_1} t_{\delta_2} \]
in  $\Mod(\Sigma_2;\{u_1,u_2\})$  (see \cite[Proposition 4.12]{Farb_Margalit_2011}), prescribes a triple $(X_0, f_0, S_0)$, which is a genus--$2$ Lefschetz fibration $(X_0, f_0)$ with a pure $2$--section $S$ that consists of two exceptional sections $S_1, S_2$. It is well-known that $X_0$ is the $\K$ surface blown-up twice,
the symplectic canonical class of which is represented by $[S]=[S_1] + [S_2]$ in $H_2(X_0; \Z)$.

We will need the following \emph{braiding lantern relation}, which is a generalization of the  lantern relation in the framed mapping class group:
\begin{lemma}\cite{BaykurHayano_multisection} \label{lem_lantern_relation}
Let the curves $a,b,c,d,x, \delta_1, \delta_2$, pairs of arcs $y,z$ and points $u_1, u_2$ in $\Sigma_0^6$ be as shown in Figure~\ref{fig_lantern_relation}, where $a,b,c,d,\delta_1,\delta_2$ are parallel to boundary components. Denote the boundary components parallel to $\delta_i$ by $S_i$. Then the relation \, $\widetilde{t_z} t_{x} \widetilde{t_{y}} = t_a t_b t_c t_d t_{\delta_2}$ \,  holds in $\Mod_{\partial \Sigma_0^6\setminus (S_1\sqcup S_2)}(\Sigma_0^6; \{u_1,u_2\})$ (whose elements restrict to identity along the four boundary components without marked points).

\begin{figure}[htbp]
\begin{center}
\includegraphics[width=35mm]{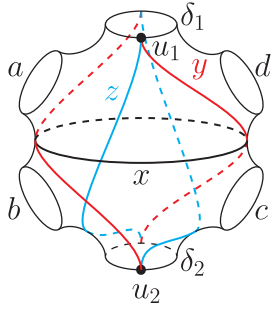}
\end{center}
\caption{Curves in $\Sigma_0^6$. }
\label{fig_lantern_relation}
\end{figure}
\end{lemma}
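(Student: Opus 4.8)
The plan is to recognize the braiding lantern relation as a specific \emph{lift} of the classical lantern relation, and then to pin down the single boundary-twist correction $t_{\delta_2}$ by a framing computation on $\Sigma_0^6$ itself. Write $L = \widetilde{t_z}\, t_x\, \widetilde{t_y}$ and $R = t_a t_b t_c t_d\, t_{\delta_2}$, both regarded as elements of $G = \Mod_{\partial \Sigma_0^6\setminus (S_1\sqcup S_2)}(\Sigma_0^6;\{u_1,u_2\})$. Consider the homomorphism $\Psi$ obtained by first capping the two marked boundary components $S_1, S_2$ with once-marked disks (producing $\Mod(\Sigma_0^4;\{s_1,s_2\})$) and then forgetting the marked points, landing in $\Mod(\Sigma_0^4)$. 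Under $\Psi$ the boundary-parallel twist $t_{\delta_2}$ dies, since $\delta_2$ comes to bound a once-marked disk and such a twist is trivial once the point is forgotten; the lifted Dehn twists $\widetilde{t_y},\widetilde{t_z}$ become honest Dehn twists $t_{y'}, t_{z'}$ along the curves underlying the arc pairs $y,z$; the twist $t_x$ becomes $t_{x'}$; and the four fixed boundaries survive as the boundary curves of $\Sigma_0^4$.

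First I would compare $L$ and $R$ at the coarsest level using the permutation homomorphism $\Lambda$ from the preceding Remark: since $\widetilde{t_y}, \widetilde{t_z}$ map to the transposition $(u_1\,u_2)$ and $t_x$ to $\id$, we get $\Lambda(L) = (u_1\,u_2)\cdot\id\cdot(u_1\,u_2) = \id = \Lambda(R)$, so both sides fix $u_1, u_2$ individually. Next, $\Psi(L) = t_{z'} t_{x'} t_{y'}$ and $\Psi(R) = t_a t_b t_c t_d$, and these coincide by the classical lantern relation in $\Mod(\Sigma_0^4)$ (see \cite{Farb_Margalit_2011}), for which $x', y', z'$ are exactly the three interior lantern curves separating the pairs among the four fixed boundary components. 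Hence $L R^{-1} \in \ker\Psi$. The kernel of $\Psi$ is generated by the point-pushing maps of the two marked points (via the Birman exact sequences used in Lemma~\ref{T:contractibility Diff}) together with the boundary twists $t_{\delta_1}, t_{\delta_2}$; since the two sides induce the same permutation and, as one reads off Figure~\ref{fig_lantern_relation}, push the marked points along homotopic loops, the difference $L R^{-1}$ reduces to a product $t_{\delta_1}^{p} t_{\delta_2}^{q}$ of boundary twists.

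The crux is therefore the framing bookkeeping: establishing $p = q = 0$ \emph{given} that the correction $t_{\delta_2}$ already sits on the right-hand side. This is precisely the regime in which lifts of Dehn twists are defined only up to the \thirdmove, as emphasized in the earlier Remark and in Lemma~\ref{T:isotopy equality lift Dehn twist}, so the exponents cannot be recovered from the projection $\Psi$ and must be computed upstairs. I would settle them by the Alexander method: fix a filling collection of properly embedded arcs and simple closed curves on $\Sigma_0^6$ (rel the four fixed boundaries and the marked points), including one arc running transversally across a collar of each $S_i$, and compare the images under $L$ and $R$ directly from the configuration in Figure~\ref{fig_lantern_relation}. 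The arcs disjoint from $S_1, S_2$ already match because $\Psi(L)=\Psi(R)$; the two collar arcs detect the boundary-twist exponents, and tracing the prescribed arc pairs $y,z$ through $\widetilde{t_y}, \widetilde{t_z}$ shows that the net twisting along $S_2$ is absorbed by the single factor $t_{\delta_2}$ while the twisting along $S_1$ cancels, giving $p=0$ and $q=0$.

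I expect this explicit drawing-chase to be the main obstacle, both because the half-arc lifts $\widetilde{t_y}, \widetilde{t_z}$ interact with the collars of $S_1, S_2$ in a way that is easy to miscount by a boundary twist, and because one must verify that no hidden point-pushing term survives. The computation is entirely analogous in spirit to the bigon arguments carried out for Lemmas~\ref{T:isotopy equality half twist} and~\ref{T:isotopy equality lift Dehn twist}; once it is completed, $L = R$ follows, which is the asserted relation.
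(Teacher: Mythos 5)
The paper does not actually prove this lemma: it is imported verbatim from \cite{BaykurHayano_multisection}, so there is no in-paper argument to compare yours against. Judged on its own, your reduction scheme is sound and correctly architected. Capping $S_1,S_2$ with once-marked disks kills $t_{\delta_1},t_{\delta_2}$, the forgetful map turns $\widetilde{t_y},\widetilde{t_z}$ into honest Dehn twists along the closed-up arc pairs, and the image of both sides is the classical lantern relation (your cyclic reordering $t_{z'}t_{x}t_{y'}$ versus $t_{x}t_{y'}t_{z'}$ is harmless since the $t_at_bt_ct_d$ side is central in the relevant subgroup). The $\Lambda$-check and the identification of $\ker\Psi$ with boundary twists together with the (pure) point-pushing subgroup via the Birman sequence are also correct, so $LR^{-1}=(\text{point-push})\cdot t_{\delta_1}^{p}t_{\delta_2}^{q}$ is a legitimate intermediate conclusion.

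The gap is that everything after that point --- which is the entire nontrivial content of the lemma --- is asserted rather than computed. That the two sides push $u_1,u_2$ along homotopic loops, and that the boundary-twist defect is exactly $(p,q)=(0,0)$ \emph{after} placing a single $t_{\delta_2}$ on the right-hand side (rather than, say, $t_{\delta_1}$, or $t_{\delta_1}t_{\delta_2}$, or nothing), is precisely what distinguishes the braiding lantern relation from the ordinary one, and you defer it to an unexecuted ``drawing-chase.'' You correctly diagnose that this is where the danger lies --- the lifts $\widetilde{t_y},\widetilde{t_z}$ are only well-defined up to conjugation by boundary twists, exactly the ambiguity the \thirdmove{} encodes, so the exponents genuinely cannot be seen downstairs --- but diagnosing the obstacle is not the same as clearing it. To complete the proof you would need to carry out the Alexander-method verification on an explicit filling system of arcs meeting the collars of $S_1$ and $S_2$ (or, equivalently, verify the relation directly on the six-holed sphere by the Alexander method from the start, which is how one would expect the source paper to do it). As written, the proposal is a correct reduction to a computation, not a proof.
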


Substituting the subword on the right hand side of the above relation in the framed mapping class group (of a surface that contains the above subsurface with marked boundary components) with the subword on the left is then called a \emph{braiding lantern substitution}. 
Its importance and relevance to our current discussion is due to our observation in  \cite{BaykurHayano_multisection} that whenever the two marked boundary components  correspond to two exceptional sections $S_1, S_2$, we get a new exceptional $2$--section $S_{12}$ after the substitution, which we view as a result of \emph{braiding} $S_1$ and $S_2$ together. 

Forgetting the two marked boundary components, one gets the usual lantern relation. In this case, the subword $t_a t_b t_c t_d$ indicates that by clustering the corresponding Lefschetz critical points on the same singular fiber, we can obtain a  fiber \emph{component} $V$, which is a sphere of self-intersection $-4$. This $V$, which we will call a \emph{lantern sphere}, can be assumed to be symplectic with respect to a Gompf-Thurston form. Remembering the two marked boundary components, we conclude that the exceptional classes $S_1$ and $S_2$ each intersect $V$ positively at one point. 

Now, one can easily find a lantern sphere $V$ in the monodromy factorization of $(X_0, f_0)$ hit once by each exceptional section $S_i$, $i=1,2$. Remarkably, this holds for \emph{any} lantern sphere (and there are many of them; at least six disjoint ones~\cite{AkhmedovPark}). This is because for $V$ symplectic, the adjunction equality implies that its intersection with the canonical class $[S_1]+[S_2]$ is $2$, whereas each $S_i$ intersects the fibers positively.

Applying the braiding lantern substitution results in a new triple $(X_2, f_2, S_{12})$, a genus--$2$ Lefschetz fibration with an exceptional $2$--section $S_{12}$. 
As discussed in \cite{BaykurHayano_multisection}, an observation of Gompf shows that $X_2$ is diffeomorphic to an ordinary blow-down of $X_0$, so $X_2= \K \# \CPb$, which has only one exceptional class, already represented by $S_{12}$. (See \cite[Corollary 3]{Li_1999}. Note that $\K \# \CPb$ is not rational nor ruled.) 
It follows that $(X_2, f_2)$ does not have any other exceptional \emph{sections}. On the other hand, it was shown by Usher in \cite{Usher} that a nonminimal symplectic $4$--manifold cannot be a nontrivial fiber sum (also see \cite{BaykurDecomp} for a simpler proof for Lefschetz fibrations), so the presence of $S_{12}$ also implies that $(X_2, f_2)$ cannot be a fiber sum of any two nontrivial Lefschetz fibrations. Hence, we have obtained another example of a Lefschetz fibration which cannot arise from Lefschetz pencils via blow-ups and fiber sums.

\smallskip
\subsection{Auroux's genus--$2$ fibration with an exceptional $2$--section} \

Let $\delta_1$ and $\delta_2$ be simple closed curves in $\Sigma_2^2$ parallel to the boundary components containing $u_1$ and $u_2$, respectively. 
We take non-separating simple closed curves $c_1, c_2, c_3, c_4, c_5\subset\Sigma_2^2$, a path $\gamma\subset \Sigma_2^2$ between $u_1$ and $u_2$ and a pair $\sigma\subset\Sigma_2^2$ of paths connecting the two boundary components as shown in Figure~\ref{scc_genus2surface}. 
\begin{figure}[htbp]
\begin{center}
\includegraphics[width=70mm]{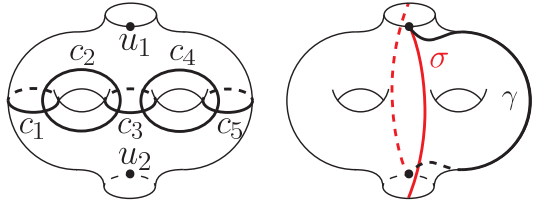}
\end{center}
\caption{Simple closed curves and paths in $\Sigma_2^2$. }
\label{scc_genus2surface}
\end{figure}
We will denote the right-handed Dehn twist along $c_i$ by $t_{i}\in \Mod(\Sigma_2^2)$. 

\begin{proposition}\label{T:lift_Auroux fibration}
The following relation holds in $\Mod(\Sigma_2^2;\{u_1,u_2\})$: 
\[
\tau_{\gamma}t_5t_4t_3t_2t_1t_1t_2t_3t_4t_5(t_3t_2t_1t_4t_3t_2t_5t_4t_3)^2\widetilde{t_\sigma} = t_{\delta_1}^2t_{\delta_2}^2. 
\]
\end{proposition}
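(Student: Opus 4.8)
The plan is to split the framed identity in $\Mod(\Sigma_2^2;\{u_1,u_2\})$ into an \emph{underlying} relation and a \emph{framing} computation that pins down the boundary powers. First I would apply the boundary-capping homomorphism $\Mod(\Sigma_2^2;\{u_1,u_2\})\to\Mod(\Sigma_2;\{u_1,u_2\})$ followed by the forgetful homomorphism $\Mod(\Sigma_2;\{u_1,u_2\})\to\Mod(\Sigma_2)$ dropping the two marked points. Under these, $t_{\delta_1}^2t_{\delta_2}^2\mapsto 1$, the half-twist $\tau_\gamma\mapsto 1$, the lift $\widetilde{t_\sigma}\mapsto t_{\bar\sigma}$ (the ordinary twist along the simple closed curve $\bar\sigma$ obtained from the pair of arcs $\sigma$ by capping), and each $t_i$ maps to the corresponding twist. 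So the image relation is
\[
t_5t_4t_3t_2t_1t_1t_2t_3t_4t_5\,(t_3t_2t_1t_4t_3t_2t_5t_4t_3)^2\,t_{\bar\sigma}=1
\]
in $\Mod(\Sigma_2)$, which is precisely the defining monodromy of Auroux's genus--$2$ fibration once the $2$--section is forgotten. Here $t_5t_4t_3t_2t_1t_1t_2t_3t_4t_5=t_5t_4t_3t_2t_1^2t_2t_3t_4t_5$ is a standard chain expression for the hyperelliptic involution $\iota$, so this is a hyperelliptic relation that I would derive from the $5$--chain relation $(t_1t_2t_3t_4t_5)^6=1$ in $\Mod(\Sigma_2)$ using only braid relations $t_it_{i+1}t_i=t_{i+1}t_it_{i+1}$ and far-commutation. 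The permutation bookkeeping is consistent: $\tau_\gamma$ and $\widetilde{t_\sigma}$ are each transpositions of $\{u_1,u_2\}$, so the total permutation of the left-hand side is trivial, as required of a product of boundary twists.

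Second, I would promote this to the marked-point group $\Mod(\Sigma_2;\{u_1,u_2\})$. The discrepancy between the bare $\Mod(\Sigma_2)$ relation and its marked lift lies in the point-pushing subgroup, the kernel of the forgetful map. By Lemma~\ref{T:isotopy equality half twist} and Lemma~\ref{T:isotopy equality lift Dehn twist}, the factors $\tau_\gamma$ and $\widetilde{t_\sigma}$ are determined by the isotopy classes of $\gamma$ and $\sigma$ relative to $\{u_1,u_2\}$; hence it suffices to exhibit the representatives $\gamma,\sigma$ of Figure~\ref{scc_genus2surface} for which the word closes up with \emph{no} residual point-push. I would check this by tracking the images of $u_1,u_2$ (or of a pair of based arcs) along the word, reading off the configuration of $c_1,\dots,c_5,\gamma,\sigma$ directly from the figure.

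The final and most delicate step is the framing computation: showing $\text{LHS}=t_{\delta_1}^{a_1}t_{\delta_2}^{a_2}$ with $a_1=a_2=2$. Since boundary twists are central, $a_1,a_2$ are genuine invariants of the framed word, and I would compute them by fixing the preferred lifts of $\tau_\gamma$ and $\widetilde{t_\sigma}$ from Remark~\ref{R:lift factorization} and Figure~\ref{F:lift localmonodromy}, then comparing against the framed $5$--chain relation $(t_1t_2t_3t_4t_5)^6=t_{\delta_1}t_{\delta_2}$. The factors $\tau_\gamma$ and $\widetilde{t_\sigma}$ are exactly the ones that interchange the two boundary components, and therefore feed extra boundary twisting into the right-hand side; using the identity $\tau_{\gamma_2}=\tau_{\gamma_1}t_{\delta_j}t_{\delta_i}^{-1}$ (and its analogue for $\widetilde{t_\sigma}$) recorded after Figure~\ref{F:lifts path}, I would reconcile the two sides and read off the doubling $t_{\delta_1}^2t_{\delta_2}^2$ relative to the chain relation's output $t_{\delta_1}t_{\delta_2}$.

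The main obstacle I anticipate is precisely this framing count. The unframed relation is rigid once the curves are fixed, but the boundary powers are sensitive to the chosen lifts of $\tau_\gamma$ and $\widetilde{t_\sigma}$ and to how these commute past the chain twists while swapping the two boundary components; keeping that accounting honest — equivalently, computing the self-intersection of the resulting exceptional $2$--section through the framed monodromy as in Remark~\ref{R:lift factorization} — is where the real content lies. The remaining steps are either standard hyperelliptic manipulations from the $5$--chain relation or direct isotopy checks licensed by Lemmas~\ref{T:isotopy equality half twist} and~\ref{T:isotopy equality lift Dehn twist}.
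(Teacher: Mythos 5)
Your skeleton --- prove the relation modulo the point-pushing and boundary-twist subgroups, then compute the point-push and framing defects --- is a legitimate way to organize the problem, but the two steps you defer are the entire content of the proposition, and your proposal contains no actual method for carrying them out. ``Tracking the images of $u_1,u_2$ along the word'' is a restatement of the goal, not an argument: the kernel of the forgetful map to $\Mod(\Sigma_2)$ is a large point-pushing group, and deciding that a $30$--letter word has \emph{trivial} push defect requires a genuine computation. The paper's key idea, which you never touch, is to exploit the hyperelliptic involution $\iota$: one first identifies $\widetilde{t_\sigma}=(t_2t_1)^3(t_5t_4)^3[\iota]$ by the Alexander method, reduces $T^2\widetilde{t_\sigma}$ to $t_{\delta_1}t_{\delta_2}[\iota]$ via the $5$--chain relation, and then passes to the quotient $\Sigma_2^2/\iota$ (a sphere with six branch points), where both $L=t_5t_4t_3t_2t_1^2t_2t_3t_4t_5$ and $\tau_\gamma^{-1}t_{\delta_1}t_{\delta_2}$ are recognized as the \emph{same} point-pushing map $\Push(\alpha)$; since the kernel of the quotient homomorphism is generated by $[\iota]$, this pins down $L=\tau_\gamma^{-1}t_{\delta_1}t_{\delta_2}[\iota]$ exactly. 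Without some substitute for this Birman--Hilden step, your ``no residual point-push'' check is an unsupported assertion. Likewise, Lemmas~\ref{T:isotopy equality half twist} and~\ref{T:isotopy equality lift Dehn twist} say that $\tau_\gamma$ and $\widetilde{t_\sigma}$ are faithfully determined by the arcs; they give you no leverage for evaluating the product.

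Two further points are off. First, the claim that the image relation in $\Mod(\Sigma_2)$ follows from the $5$--chain relation ``using only braid relations and far-commutation'' cannot be right at the level of positive factorizations: the image word has $29$ Dehn twist factors while $(t_1\cdots t_5)^6$ has $30$, and indeed the paper relates the two factorizations only after a (braiding) \emph{lantern} substitution; even as an identity of group elements, you still need to express $t_{\bar\sigma}$ in terms of the $t_i$, which is exactly the Alexander-method computation you are implicitly assuming. Second, $t_{\delta_1}$ and $t_{\delta_2}$ are not individually central in $\Mod(\Sigma_2^2;\{u_1,u_2\})$, since $\tau_\gamma$ and $\widetilde{t_\sigma}$ swap the boundary components and hence conjugate one boundary twist to the other; only their product (and the normal abelian subgroup they generate) behaves as you describe, so your framing bookkeeping needs to track the permutation, as the paper does when it simplifies $[\iota]\,t_{\delta_1}t_{\delta_2}\,[\iota]$.
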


\begin{proof}
Let $L = t_5t_4t_3t_2t_1t_1t_2t_3t_4t_5$ and $T = t_3t_2t_1t_4t_3t_2t_5t_4t_3$. 
It is easy to see (by the Alexander method (see \cite[\S.~2.3]{Farb_Margalit_2011}), for example) that $\widetilde{t_{\sigma}}$ is equal to $(t_2t_1)^3 (t_5t_4)^3 [\iota]$, where $[\iota]$ is the mapping class of the hyperelliptic involution $\iota$ given in Figure~\ref{loop_in_quotientsurface}. 
\begin{figure}[htbp]
\begin{center}
\includegraphics[width=90mm]{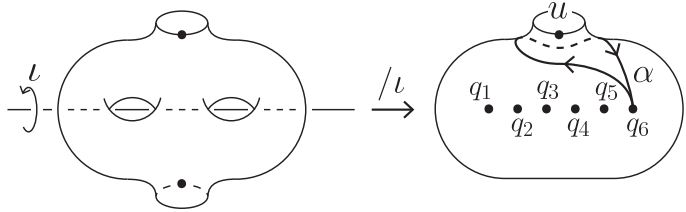}
\end{center}
\caption{Left~:~the hyperelliptic involution $\iota$. Right~:~the quotient surface $\Sigma_2^2/\iota$. }
\label{loop_in_quotientsurface}
\end{figure}
Thus, the product $T^2 \widetilde{t_\sigma}$ is calculated as follows: 
{\allowdisplaybreaks
\begin{align*}
T^2  \widetilde{t_\sigma} = & T t_3t_2t_1t_4t_3t_2t_5t_4t_3 (t_2t_1)^3(t_5t_4)^3 [\iota] \\
= & T t_3t_2t_1t_4t_3t_2(t_5t_4t_3t_2t_1) (t_2t_1)^2(t_5t_4)^3 [\iota] \\
= & T t_3t_2t_1(t_5t_4t_3t_2t_1)t_5t_4t_3(t_2t_1)^2(t_5t_4)^3 [\iota] \\
= & T t_3t_2t_1(t_5t_4t_3t_2t_1)^2t_2t_1(t_5t_4)^3 [\iota] \\
= & T (t_5t_4t_3t_2t_1)^3(t_5t_4)^3 [\iota] \\
= & T (t_2t_1)^3(t_5t_4t_3t_2t_1)^3 [\iota] \\
= & (t_5t_4t_3t_2t_1)^6 [\iota] \\
= & t_{\delta_1}t_{\delta_2} [\iota], 
\end{align*}}
where the last equality follows from the chain relation of length $5$. 
We can naturally regard the mapping class $L$ as an element in $\pi_0(C_{\emptyset}(\Sigma_2^2,\{u_1,u_2\};\iota))$, where $C_{\emptyset}(\Sigma_2^2,\{u_1,u_2\};\iota)$ is defined in \cite[\S.3.1]{BaykurHayano_multisection}. 
By \cite[Lemma 3.1]{BaykurHayano_multisection}, the kernel of the homomorphism 
\[
\iota_\ast :\pi_0(C_{\emptyset}(\Sigma_2^2,\{u_1,u_2\};\iota))\to \Mod(\Sigma_2^2/\iota;u,\{q_1,q_2,q_3,q_4,q_5,q_6\}) 
\]
induced by the quotient map $\Sigma_2^2\to \Sigma_2^2/\iota$ is generated by the class $[\iota]$, where $u$ denotes the point $[u_1]=[u_2] \in \Sigma_2^2/\iota$ and $\Mod(\Sigma_2^2/\iota;u,\{q_1,q_2,q_3,q_4,q_5,q_6\})$ consists of orientation preserving diffeomorphisms fixing $u$ (resp.~$\{q_1,q_2,q_3,q_4,q_5,q_6\}$) pointwise (resp.~setwise) modulo isotopies fixing the same data. 
It is easy to see that the image $\iota_\ast(L)$ is equal to the pushing map $\Push(\alpha)$ along the loop $\alpha$ in Figure~\ref{loop_in_quotientsurface}. 
The mapping class $\tau_\gamma^{-1}t_{\delta_1}t_{\delta_2}$ can also be regarded as an element in $\pi_0(C_{\emptyset}(\Sigma_2^2,\{u_1,u_2\};\iota))$, and it is sent to $\Push(\alpha)$ by $\iota_\ast$. 
Thus, $L^{-1}\tau_\gamma^{-1}t_{\delta_1}t_{\delta_2}$ is contained in the kernel of $\iota_\ast$. 
Since $L^{-1}\tau_\gamma^{-1}t_{\delta_1}t_{\delta_2}$ interchanges the points $u_1$ and $u_2$, this is equal to $[\iota]$. 
In particular the following relation holds in $\Mod(\Sigma_2^2; \{u_1,u_2\})$: 
\[
L = \tau_{\gamma}^{-1}t_{\delta_1} t_{\delta_2} [\iota]. 
\]
Thus, the product $\tau_{\gamma} LT^2 \widetilde{t_{\sigma}}$ is calculated as follows: 
{\allowdisplaybreaks
\begin{align*}
\tau_{\gamma}LT^2 \widetilde{t_{\sigma}} = & \tau_{\gamma}\tau_{\gamma}^{-1}t_{\delta_1} t_{\delta_2} [\iota] t_{\delta_1}t_{\delta_2} [\iota] \\
=& t_{\delta_1} t_{\delta_2} t_{\delta_2}t_{\delta_1} [\iota] [\iota] \\
=& t_{\delta_1}^2 t_{\delta_2}^2.
\end{align*}
}
This completes the proof. 
\end{proof}

As we explained in Section~\ref{S:factorization multisection}, we can regard the surface $\Sigma_2^2$ as a subsurface of $\Sigma_2$ by capping $\Pa\Sigma_2^2$ by two disks with the centers $s_1, s_2 \in\Sigma_2$. The pair of paths $\sigma$ and the path $\gamma$ respectively give rise to a simple closed curve in $\Sigma_2$ going through $s_1$ and $s_2$ and a simple path between $s_1$ and $s_2$. To simplify the notation,  we use the same symbols $\sigma$ and $\gamma$ to represent these curves. We also denote the Dehn twist along $c_i\subset \Sigma_2$ by $t_i \in \Mod(\Sigma_2;\{s_1,s_2\})$.  

Now by Proposition~\ref{T:lift_Auroux fibration}, we obtain the following  factorization in $\Mod(\Sigma_2;\{s_1,s_2\})$: 
\begin{equation}\label{E:factorization Auroux fibration in pointed MCG}
\tau_{\gamma}t_5t_4t_3t_2t_1t_1t_2t_3t_4t_5(t_3t_2t_1t_4t_3t_2t_5t_4t_3)^2\widetilde{t_\sigma} =1 \, ,
\end{equation}
which prescribes a triple $(X_1, f_1, S_1)$ where $f_1:X_1\to S^2$ is a genus--$2$ Lefschetz fibration with a sphere $2$--section $S_1$ by Theorem~\ref{MainQuote}. Under the forgetful homomorphism $\Mod(\Sigma_2;\{s_1,s_2\}) \to \Mod(\Sigma_2)$ this positive factorization 
maps to the monodromy factorization of Auroux's aforementioned genus--$2$ Lefschetz fibration given in \cite{Auroux_2003}. On the other hand, we can calculate the self-intersection number of $S_1$ using the positive factorization in the framed mapping class group we had in Proposition~\ref{T:lift_Auroux fibration} (which of course is a lift of the monodromy factorization $W_{X_1, f_1, S_1}!)$ and Theorem~1.1 of \cite{BaykurHayano_multisection}. Hence $(X_1, f_1, S_1)$ is Auroux's genus--$2$ Lefschetz fibration, where $S_1$ is the exceptional $2$--section.

\smallskip
\smallskip
We are now ready to show that $(X_1, f_1, S_1)$ can be reproduced using our recipe discussed in the previous subsection. Let $(X_0,f_0,S_0)$ denote the genus--$2$ Lefschetz fibration with a pure $2$--section $S_0$, which is a disjoint union of two exceptional sections, as prescribed by the $5$--chain relation  $(t_1t_2t_3t_4t_5)^6=1$ in $\Mod(\Sigma_2;\{s_1,s_2\})$. 

\smallskip
\begin{proposition}\label{T:Auroux fibration and chain fibration}
The triple $(X_1,f_1,S_1)$, where $(X_1,f_1)$ is Auroux's genus--$2$ fibration with the exceptional $2$--section $S_1$, is equivalent to a triple obtained from $(X_0,f_0,S_0)$ by a single braiding lantern substitution, followed by a perturbation of the $2$--section.
\end{proposition}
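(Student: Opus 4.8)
The plan is to prove the equivalence entirely at the level of monodromy factorizations and then invoke the correspondence established above: by Corollary~\ref{T:equivalence multisection factorization framed} (or, after capping, Theorem~\ref{T:equivalence multisection factorization}), it suffices to show that the factorization produced by braiding and perturbing is Hurwitz equivalent to Auroux's factorization~\eqref{E:factorization Auroux fibration in pointed MCG}, or its framed lift $\tau_{\gamma}LT^2\widetilde{t_\sigma}=t_{\delta_1}^2t_{\delta_2}^2$ from Proposition~\ref{T:lift_Auroux fibration}. I would begin from the framed monodromy factorization of $(X_0,f_0,S_0)$, the $5$--chain relation $(t_{c_1}t_{c_2}t_{c_3}t_{c_4}t_{c_5})^6=t_{\delta_1}t_{\delta_2}$ in $\Mod(\Sigma_2^2;\{u_1,u_2\})$, and exhibit inside it a lantern sphere $V$ that is hit exactly once by each of the exceptional sections $S_1,S_2$. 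The existence of such a configuration is guaranteed by the adjunction argument of the previous subsection; the concrete task is to use elementary transformations to bring four of the thirty Dehn twists into a consecutive subword $t_at_bt_ct_d$ whose vanishing cycles cobound the four--holed sphere $V$, with the marked points $u_1,u_2$ lying on its two remaining holes.

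Next I would apply a single braiding lantern substitution, replacing $t_at_bt_ct_d$ --- together with one boundary twist $t_{\delta_2}$ commuted in from the right-hand side, as dictated by Lemma~\ref{lem_lantern_relation} --- with $\widetilde{t_z}t_x\widetilde{t_y}$. This produces the braided triple $(X_2,f_2,S_{12})$, whose framed factorization now contains two lifted Dehn twists and one genuine Dehn twist in place of the four original factors, and whose $2$--section $S_{12}$ is the connected exceptional sphere obtained by braiding $S_1$ and $S_2$. A bookkeeping check confirms the numerical consistency: the number of Lefschetz critical points drops from $30$ to $29$, matching the ordinary blow-down $X_2=\K\#\CPb$ with $\chi(X_2)=25$, while the branch data becomes $k=0$, $r=2$ and the right-hand framing becomes $t_{\delta_1}t_{\delta_2}^2$.

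I would then perform the perturbation of the $2$--section, the standard move from \cite{BaykurHayano_multisection} that pushes a branch point off a Lefschetz critical point and thereby converts one lifted Dehn twist $\widetilde{t}$ into a genuine Dehn twist together with a half twist $\tau$. After this move the branch data reads $k=1$, $r=1$, the critical point count is still $29$, and the right-hand framing is adjusted to $t_{\delta_1}^2t_{\delta_2}^2$ --- precisely the profile of Auroux's factorization. To finish, I would compare the resulting word with $\tau_{\gamma}LT^2\widetilde{t_\sigma}$, exploiting the explicit algebra already carried out in the proof of Proposition~\ref{T:lift_Auroux fibration}: the identities $T^2\widetilde{t_\sigma}=t_{\delta_1}t_{\delta_2}[\iota]$ and $L=\tau_{\gamma}^{-1}t_{\delta_1}t_{\delta_2}[\iota]$ recast Auroux's factorization directly in terms of the $5$--chain data and the hyperelliptic involution, and fusing the leading half twist $\tau_\gamma$ with a neighbouring Dehn twist back into a lifted Dehn twist should return Auroux's word to the two-lifted-twist shape produced by the braiding substitution.

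I expect the principal obstacle to be exactly this last comparison: matching the specific simple closed curves and arcs --- the braiding curves $x,y,z$ against Auroux's $\sigma,\gamma$ and the chain curves $c_i$ --- so that the two factorizations literally coincide after a controlled sequence of elementary transformations, global conjugations, and framing conjugations. One must also track the boundary framings carefully, since both the braiding substitution and the perturbation alter the product of boundary twists on the right-hand side, and it is the consistency of these successive changes with $t_{\delta_1}^2t_{\delta_2}^2$ that certifies that the self-intersection of the resulting $2$--section agrees with that of $S_1$. Once the words are matched, Lemmas~\ref{T:isotopy equality half twist} and \ref{T:isotopy equality lift Dehn twist} guarantee that the half twist and lifted Dehn twist factors faithfully record their underlying curve data, and Corollary~\ref{T:equivalence multisection factorization framed} then yields the asserted equivalence of triples.
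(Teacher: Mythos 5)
Your strategy is exactly the paper's: Hurwitz--manipulate the $5$--chain word to expose a lantern configuration met once by each exceptional section, apply the braiding lantern substitution, perturb the $2$--section, and then match the resulting word with the factorization~\eqref{E:factorization Auroux fibration in pointed MCG} of Proposition~\ref{T:lift_Auroux fibration}, invoking Theorem~\ref{T:equivalence multisection factorization} to convert word equivalence into equivalence of triples. Your numerical bookkeeping ($30\to 29$ critical points, $\chi=25$, the branch data $(k,r)$ passing from $(0,2)$ to $(1,1)$) is a sensible sanity check and is consistent with what happens in the paper.

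The gap is that the two steps you flag as ``the concrete task'' and ``the principal obstacle'' are not side issues --- they \emph{are} the proof, and you have not carried them out. The adjunction argument tells you that every lantern sphere in $(X_0,f_0)$ meets each of $S_1,S_2$ once, but it does not hand you a consecutive subword, nor does it tell you that the particular substitution you perform yields Auroux's fibration rather than some other genus--$2$ fibration with an exceptional $2$--section; a priori different lantern configurations could produce inequivalent triples. The paper resolves this by an explicit choice and an explicit computation: it shows by a long chain of elementary transformations that $(t_1t_2t_3t_4t_5)^6$ is Hurwitz equivalent to a word ending in $t_1t_1t_5t_5$ (the lantern bounded by two parallel copies each of $c_1$ and $c_5$ together with the two marked points), substitutes $t_1t_1t_5t_5=\widetilde{t_\xi}\,t_3\,\widetilde{t_\sigma}$ in the \emph{capped} group $\Mod(\Sigma_2;\{s_1,s_2\})$ (so no boundary twist needs to be commuted in, unlike your framed-group version), transports $\widetilde{t_\xi}$ to $\widetilde{t_\zeta}$ by further elementary moves, perturbs $\widetilde{t_\zeta}$ into $t_a\tau_\gamma$ via \cite[Remark 3.9]{BaykurHayano_multisection}, and then verifies by a few more Hurwitz moves that the result is literally the word $\tau_\gamma L T^2\widetilde{t_\sigma}$. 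Without producing the specific curves $(a,b,c,d)=(c_1,c_1,c_5,c_5)$ and the explicit sequences of elementary transformations, your argument establishes only that the construction is plausible, not that it terminates at Auroux's triple. Two smaller cautions: your proposed final step of ``fusing $\tau_\gamma$ with a neighbouring Dehn twist back into a lifted Dehn twist'' must be justified as an honest inverse of the perturbation move (the paper instead runs the perturbation forward and compares words directly), and if you insist on working in $\Mod(\Sigma_2^2;\{u_1,u_2\})$ you must track the {\thirdmoves} and the boundary-twist bookkeeping throughout, which the paper avoids by descending to the capped group where Theorem~\ref{T:equivalence multisection factorization} already suffices.
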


\begin{proof}
We first prove that the factorization $(t_1t_2t_3t_4t_5)^6=1$ in $\Mod(\Sigma_2;\{s_1,s_2\})$ is Hurwitz equivalent to the following factorization: 
\begin{equation}\label{E:intermediate factorization}
t_3t_2t_1t_4t_3t_2t_5t_4t_3t_5t_4t_3t_2t_1t_2t_3t_4t_5t_3t_2t_1t_4t_3t_2t_5t_4\underline{t_1t_1t_5t_5} =1. 
\end{equation}
The factorization \eqref{E:intermediate factorization} can be changed by elementary transformations as follows (in each line elementary transformations are applied to the underlined part to obtain the next line):
{\allowdisplaybreaks
\begin{align*}
& t_3t_2t_1t_4t_3t_2t_5t_4t_3t_5t_4t_3t_2t_1t_2t_3t_4t_5t_3t_2t_1t_4t_3t_2\underline{t_5t_4t_1t_1t_5}t_5 \\
\sim & t_3t_2t_1t_4t_3t_2t_5t_4t_3t_5t_4t_3t_2t_1t_2t_3t_4t_5t_3t_2t_1\underline{t_4t_3t_2t_4}t_5t_4t_1t_1t_5 \\ 
\sim & t_3t_2t_1t_4t_3t_2t_5t_4t_3t_5t_4t_3t_2t_1t_2t_3t_4t_5\underline{t_3t_2t_1t_3}t_4t_3t_2t_5t_4t_1t_1t_5 \\ 
\sim & t_3t_2t_1t_4t_3t_2t_5t_4t_3t_5t_4t_3\underline{t_2t_1t_2t_3t_4t_5}t_2t_3\underline{t_2t_1t_4t_3t_2t_5}t_4t_1t_1t_5 \\ 
\sim & t_3t_2t_1t_4t_3t_2t_5t_4t_3t_5t_4\underline{t_3t_1t_2t_3t_4t_5t_1t_2t_3t_4t_5}t_2\underline{t_1t_3t_2t_4t_1}t_1t_5 \\ 
\sim & t_3t_2t_1t_4t_3t_2t_5t_4t_3t_5t_4(t_1t_2t_3t_4t_5)^2t_1t_2t_3\underline{t_2t_1t_2t_4t_1t_5} \\ 
\sim & t_3t_2t_1t_4t_3t_2t_5t_4t_3t_5t_4\underline{(t_1t_2t_3t_4t_5)^2t_1t_2t_3t_4t_5t_2t_1t_2t_1} \\ 
\sim & \underline{t_3t_2t_1t_4t_3t_2t_5t_4t_3t_5t_4t_5t_4t_5t_4}(t_1t_2t_3t_4t_5)^3 \\ 
\sim & \underline{t_2t_1t_2}t_1t_2\underline{t_1t_3t_2t_1}t_4t_3t_2t_5t_4t_3(t_1t_2t_3t_4t_5)^3 \\ 
\sim & t_1t_2t_1t_1\underline{t_2t_3t_2}t_1\underline{t_2t_4t_3t_2}t_5t_4t_3(t_1t_2t_3t_4t_5)^3 \\ 
\sim & t_1t_2\underline{t_1t_1t_3}t_2\underline{t_3t_1t_4t_3}t_2\underline{t_3t_5t_4t_3}(t_1t_2t_3t_4t_5)^3 \\ 
\sim & t_1t_2t_3\underline{t_1t_1t_2t_1t_4}t_3\underline{t_4t_2t_5t_4}t_3t_4(t_1t_2t_3t_4t_5)^3 \\ 
\sim & t_1t_2t_3t_4\underline{t_1t_1t_2t_1t_3t_2t_5}t_4t_5t_3t_4(t_1t_2t_3t_4t_5)^3 \\ 
\sim & t_1t_2t_3t_4t_5t_1\underline{t_1t_2t_1}t_3t_2t_4t_5t_3t_4(t_1t_2t_3t_4t_5)^3 \\ 
\sim & t_1t_2t_3t_4t_5t_1t_2t_1\underline{t_2t_3t_2}t_4t_5t_3t_4(t_1t_2t_3t_4t_5)^3 \\ 
\sim & t_1t_2t_3t_4t_5t_1t_2\underline{t_1t_3}t_2t_3t_4t_5t_3t_4(t_1t_2t_3t_4t_5)^3 \\ 
\sim & t_1t_2t_3t_4t_5t_1t_2t_3\underline{t_1t_2t_3t_4t_5}t_3t_4(t_1t_2t_3t_4t_5)^3 \\ 
\sim & t_1t_2t_3t_4t_5t_1t_2t_3t_4t_5t_1t_2t_3t_4t_5(t_1t_2t_3t_4t_5)^3 \\ 
= & (t_1t_2t_3t_4t_5)^6.  
\end{align*}
}

We take pairs of paths $\xi, \zeta$ and a simple closed curve $a$ as shown in Figure~\ref{F:scc genus2surface2}. 
\begin{figure}[htbp]
\centering
\includegraphics[width=37mm]{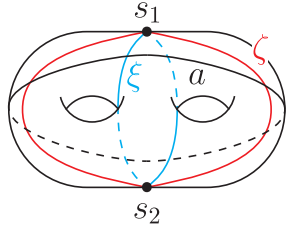}
\caption{Pair of paths and a simple closed curve in $\Sigma_2$. }
\label{F:scc genus2surface2}
\end{figure}
By Lemma~\ref{lem_lantern_relation}, $t_1t_1t_5t_5$ is equal to $\widetilde{t_\xi}t_3\widetilde{t_{\sigma}}$. 
Applying the substitution by this relation to the underlined part in \eqref{E:intermediate factorization}, we obtain the following factorization: 
\begin{align}\label{E:intermediate factorization2}
& t_3t_2t_1t_4t_3t_2t_5t_4t_3t_5t_4t_3t_2t_1t_2t_3t_4t_5\underline{t_3t_2t_1t_4t_3t_2t_5t_4\widetilde{t_\xi}}t_3\widetilde{t_{\sigma}}   \nonumber\\
\sim & t_3t_2t_1t_4t_3t_2t_5t_4t_3t_5t_4t_3t_2t_1t_2t_3t_4t_5\widetilde{t_{\zeta}}t_3t_2t_1t_4t_3t_2t_5t_4t_3\widetilde{t_{\sigma}},  
\end{align}
where the equivalence above holds since $t_3t_2t_1t_4t_3t_2t_5t_4t_3(\xi)$ is equal to $\zeta$.

According to \cite[Remark 3.9]{BaykurHayano_multisection}, we can perturb the $2$--section of the fibration corresponding \eqref{E:intermediate factorization2} so that it is away from the Lefschetz critical point corresponding $\widetilde{t_{\zeta}}$, and the resulting fibration with a $2$--section has the following factorization: 
\begin{align}\label{E:intermediate factorization3}
& t_3t_2t_1t_4t_3t_2t_5t_4t_3t_5t_4t_3t_2t_1\underline{t_2t_3t_4t_5t_a}\tau_{\gamma}t_3t_2t_1t_4t_3t_2t_5t_4t_3\widetilde{t_{\sigma}}\nonumber \\  
\sim & \underline{t_3t_2t_1t_4t_3t_2t_5t_4t_3t_5t_4t_3t_2t_1t_1t_2t_3t_4t_5\tau_{\gamma}}t_3t_2t_1t_4t_3t_2t_5t_4t_3\widetilde{t_{\sigma}} \nonumber\\  
\sim & \underline{t_5t_4t_3t_2t_1t_1t_2t_3t_4t_5\tau_{\gamma}}(t_3t_2t_1t_4t_3t_2t_5t_4t_3)^2\widetilde{t_{\sigma}} \nonumber \\
\sim & \tau_{\gamma}t_5t_4t_3t_2t_1t_1t_2t_3t_4t_5(t_3t_2t_1t_4t_3t_2t_5t_4t_3)^2\widetilde{t_{\sigma}} \nonumber. 
\end{align}
The last factorization coincides with the monodromy factorization of $(X_1, f_1, S_1)$ in the Equation~\eqref{E:factorization Auroux fibration in pointed MCG} above. 

We have thus derived $(X_1, f_1, S_1)$ from $(X_0, f_0, S_0)$, after a braiding lantern substitution, followed by a perturbation of the resulting $2$--section to move one of its branched points off the Lefschetz critical locus ---while applying Hurwitz moves for Lefschetz fibrations with multisections in various steps of the proof.
\end{proof}

\vspace{0.2in}


\end{document}